\documentclass[11pt,a4paper]{amsart} %reqno
\usepackage[czech,english]{babel}
\usepackage{amscd,amssymb}
\usepackage{amsmath}
\usepackage{mathtools}
\usepackage{mathrsfs}
\usepackage{dsfont}
\usepackage{url}
\usepackage[all]{xy}
\usepackage{hyperref}
\usepackage{tikz-cd}
%\usepackage{showkeys}
%\usepackage{showlabels}

%\newcommand{\CHB}{\textcolor{blue}}
%\newcommand{\CHP}{\textcolor{purple}}

% --- Basic information
\title{$S$-almost perfect commutative rings}
\subjclass[2010]{13B30, 13C60, 13D07, 13D09, 18E40}
\keywords{Almost perfect rings, $h$-locality, strongly flat modules,
weakly cotorsion modules, contramodules, covers}

\thanks{
The first-named author is partially supported by grants
BIRD163492 and DOR1690814 of Padova University.}

\thanks{
The second-named author's research is supported by
research plan RVO:~67985840, the Israel
Science Foundation grant~\#\,446/15, and by the Grant Agency
of the Czech Republic under the grant P201/12/G028.}

\author[S.~Bazzoni]{Silvana Bazzoni}
\address[Silvana Bazzoni]{%
Dipartimento di Matematica Tullio Levi-Civita \\
Universit\`a di Padova \\
    Via Trieste 63, 35121 Padova (Italy)}
\email{bazzoni@math.unipd.it}

\author[L.~Positselski]{Leonid Positselski}
\address[Leonid Positselski]{%
Institute of Mathematics \\
Czech Academy of Sciences \\
\v Zitn\'a~25, 115~67 Praha~1 (Czech Republic); and
\newline\indent
Laboratory of Algebraic Geometry \\
National Research University Higher School of Economics \\
Moscow 119048 (Russia); and
\newline\indent
Sector of Algebra and Number Theory \\
Institute for Information Transmission Problems \\
Moscow 127051 (Russia); and
\newline\indent
Department of Mathematics \\
Faculty of Natural Sciences \\
University of Haifa \\
Mount Carmel, Haifa 31905 (Israel)}
\email{positselski@yandex.ru}
%

% ------------------------------------------------------------------------------
% Macros, theorems and similar
% --- "Debugging macros" for drafts
%\newcommand{\todo}[1]{
%  \marginpar{\small \begin{tabular}{|p{2.7cm}|}\hline\texttt{TODO: #1}\\\hline\end{tabular}}
%}
%\newcommand{\note}[1]{
%  \marginpar{\small \begin{tabular}{|p{2.7cm}|}\hline\texttt{NOTE: #1}\\\hline\end{tabular}}
%}

% --- Abbreviations - general
\renewcommand{\iff}{if and only if }

\newcommand{\la}{\longrightarrow}

\newcommand{\ep}{\varepsilon}

% --- Abbreviations - specific to the paper

% --- Operations and relations

%\newcommand{\unit}{\mathds{1}}

% --- (Semi)rings
\newcommand{\bbN}{\mathbb{N}}

% --- Prime ideals

\newcommand{\m}{\mathfrak{m}}
\newcommand{\n}{\mathfrak{n}}
\newcommand{\p}{\mathfrak{p}}
\newcommand{\q}{\mathfrak{q}}

% --- Sheaves and schemes
 
\newcommand{\Max}{\operatorname{Max}}    % Zariski spectrum
  % idempotent spectrum of a valuation domain (paper specific)
  % admissible intervals in the specture (paper specific)
      % restrictions inside presheaves
              % structure sheaf
  % projective space

% --- Functors and operators
\newcommand{\Hom}{\operatorname{Hom}}

\newcommand{\Ext}{\operatorname{Ext}}
\newcommand{\Tor}{\operatorname{Tor}}

\newcommand{\Gen}{\operatorname{Gen}}
\newcommand{\Ker}{\operatorname{Ker}}

\newcommand{\Img}{\operatorname{Im}}
\newcommand{\Coker}{\operatorname{Coker}}

% --- Classes
\newcommand{\A}{\mathcal{A}}
\newcommand{\B}{\mathcal{B}}
\newcommand{\C}{\mathcal{C}}
\newcommand{\D}{\mathcal{D}}

\newcommand{\F}{\mathcal{F}}

\newcommand{\clP}{\mathcal{P}}

\newcommand{\SF}{\mathcal{S}\mathcal{F}}
\newcommand{\WC}{\mathcal{W}\mathcal{C}}

\newcommand{\W}{\mathcal{W}}
%\newcommand{\Z}{\mathcal{Z}} 
%\newcommand{\Ch}{\mathcal{C}h}   
 
% \newcommand{\class}[1]{\mathcal{#1}}

% --- Classes - constructions

% --- Categories
\newcommand{\Modr}[1]{\mathrm{Mod}\textrm{-}{#1}}
\newcommand{\Modl}[1]{{#1}\textrm{-}\mathrm{Mod}}

\newcommand{\ModR}{\mathrm{Mod}\textrm{-}R}

%\newcommand{\FlatR}{\mathrm{\Flat}\textrm{-}R}
%\newcommand{\R\Flat}{R\textrm{-}\mathrm{\Flat}}
%\newcommand{\CotR}{\mathrm{\Cot}\textrm{-}R}
%\newcommand{\R\Cot}{R\textrm{-}\mathrm{\Cot}}

% --- Categories - constructions

\newcommand{\Ann}{\mathrm{Ann}}
 
\newcommand{\Der}[1]{\mathbf{D}({#1})}

% --- Model categories

% ------------------------------------------------------------------------------
% Theorems and similar
\theoremstyle{plain}
\newtheorem{thm}{Theorem}[section]
\newtheorem{lem}[thm]{Lemma}
\newtheorem{prop}[thm]{Proposition}
\newtheorem{cor}[thm]{Corollary}

\theoremstyle{definition}
\newtheorem{defn}[thm]{Definition}

\newtheorem{nota}[thm]{Notation}
\newtheorem{expl}[thm]{Example}

\theoremstyle{remark}
\newtheorem{rem}[thm]{Remark}

\emergencystretch=2em

\newcommand{\+}{\protect\nobreakdash-}

\newcommand{\aaa}{\mathfrak a}

% ------------------------------------------------------------------------------% The abstract, MSC, keywords

\begin{document}

\begin{abstract}
 Given a multiplicative subset $S$ in a commutative ring $R$, we
consider $S$\+weakly cotorsion and $S$\+strongly flat $R$\+modules,
and show that all $R$\+modules have $S$\+strongly flat covers
if and only if all flat $R$\+modules are $S$\+strongly flat.
 These equivalent conditions hold if and only if the localization $R_S$
is a perfect ring and, for every element $s\in S$, the quotient ring
$R/sR$ is a perfect ring, too.
 The multiplicative subset $S\subset R$ is allowed to contain
zero-divisors.
\end{abstract}

\maketitle

% -----------------------------------------------------------------------------
% The table of contents
\setcounter{tocdepth}{1}
\tableofcontents

% ------------------------------------------------------------------------------
% The document body
\section*{Introduction}

 Let $R$ be a commutative ring and $Q$ its total ring of quotients.
 An $R$\+module $C$ is said to be \emph{weakly cotorsion} if
$\Ext_R^1(Q,C)=0$.
 An $R$\+module $F$ is \emph{strongly flat} if $\Ext_R^1(F,C)=0$
for all weakly cotorsion $R$\+modules~$C$.
 This definition first appeared in the paper~\cite{Trl1}.
 The problem of characterizing commutative domains $R$ for which
the class of all strongly flat modules is covering was posed in
lecture notes~\cite{Trl2}.

 This problem was solved  in the series of papers~\cite{BS1,BS2}.
 It was shown that, for a commutative domain~$R$, the class of
all strongly flat $R$\+modules is covering if and only if it coincides
with the class of all flat $R$\+modules, and this holds if and only
if all the quotient rings of $R$ by nonzero ideals are perfect.
 Such rings $R$ received the name of \emph{almost perfect domains},
and were further studied in the papers~\cite{BS3,Baz}.

 Some of these results were generalized to commutative rings $R$ with
zero-divisors in the recent paper~\cite{FS}.
 There it was shown that, for a given commutative ring $R$, all flat
$R$\+modules are strongly flat if and only if the following two
conditions are satisfied.
 Firstly, the ring $Q$ must be perfect, and secondly, all the quotient
rings of $R$ by its principal ideals generated by regular elements must
be perfect as well.

 In an independent development, a partial extension of the results
of~\cite{BS1,BS2} to the following setting was suggested in
the paper~\cite{Pcta}.
 Let $R$ be a commutative ring and $S\subset R$ be its multiplicative
subset (which may well contain some zero-divisors).
 Then one can consider the localization $R_S$ of $R$ with respect to $S$,
and define \emph{$S$\+weakly cotorsion} and \emph{$S$\+strongly
flat} $R$\+modules by replacing $Q$ with $R_S$ in the above definition.

 It was shown in~\cite{Pcta} that, for any commutative Noetherian ring
$R$ of Krull dimension~$1$, denoting by $S$ the complement to
the union of all minimal prime ideals in $R$, one has that the classes of
flat and $S$\+strongly flat $R$\+modules coincide.
 The argument was based on the novel \emph{contramodule} techniques
developed in the papers~\cite{Pcta,Pos}, together with the idea of
using the two-term complex $R\to R_S$ where the quotient module
$K=Q/R$ was traditionally considered.

 The aim of this paper is to characterize multiplicative subsets $S$ in
commutative rings $R$ for which the class of all $S$\+strongly flat
$R$\+modules is covering.  Once again, it turns out that this holds if
and only if this class coincides with the class of all flat $R$\+modules.
 Commutative rings $R$ for which all flat $R$\+modules are
$S$\+strongly flat are characterized by the following two conditions:
the ring $R_S$ is perfect, and, for every element $s\in S$,
the ring $R/sR$ are perfect, too (Theorem~\ref{T:characterization}).
 Rings satisfying these conditions are called \emph{$S$\+almost perfect}.

 A general framework for $S$\+strongly flat modules was developed
in the paper~\cite{PSl2}, where it was shown that, under moderate
assumptions on a multiplicative subset $S$ in a commutative ring $R$,
a flat $R$\+module $F$ is $S$\+strongly flat if and only if
the $R_S$\+module $F\otimes_RR_S$ is projective and
the $R/sR$\+module $F/sF$ is projective for every $s\in S$.
 We show that such description of $S$\+strongly flat modules is valid
for all \emph{$S$\+h-nil} rings, which are defined as the rings $R$ such
that every element $s\in S$ is contained only in finitely many maximal
ideals of $R$ and every prime ideal of $R$ intersecting $S$ is maximal.

 By Theorem~\ref{T:S-h-nil-is-Matlis} we also show that, for any
$S$\+h-nil ring $R$, the projective dimension of the $R$\+module
$R_S$ does not exceed~$1$.
 This is an important property of a multiplicative subset in
a commutative ring, known to be equivalent to a number of other
conditions, at least, in the case when $S$ consists of
regular elements~\cite{AHHT}.
 We deduce some implications of this property without the regularity
assumption.
 We also define and discuss \emph{$S$\+h-local rings}, which form
a wider class than that of $S$\+h-nil rings.

 Completions with respect to the $R$\+topology played a key role
in the arguments in the papers~\cite{BS1,BS2}.
 \emph{Contramodules}, which appear to be a more powerful and
flexible version of $R$\+ and $S$\+completions and complete
modules, suitable for applications to homological algebra questions,
are used in this paper instead.
 We refer the reader to the papers~\cite{Pcta,Pos} for background
material about contramodules.

 Let us describe the overall structure of the paper.
 In Section~\ref{preliminaries-secn} we collect a few known or
essentially known results supported by brief arguments and
references, while Section~\ref{S-divisible-torsion-secn}
contains several new or more complicated technical lemmas with
full proofs.
 We prove our criterion for existence of $S$\+strongly flat covers
in Section~\ref{strongly-flat-secn}.

 The discussion of $S$\+h-local rings and decompositions of modules
into direct sums or products indexed over the maximal ideals of
the ring in Section~\ref{S-h-local-secn} forms a background for
the discussion of $S$\+h-nil rings in
Section~\ref{S-h-nil-secn}.
 In Section~\ref{t-contra-secn} we discuss $t$\+contramodules, which
are our main technical tool.
 In particular,
Proposition~\ref{t-contramodules-relatively-cotorsion}
is an important result, going back to~\cite{Pcta}, on which
the proofs in Section~\ref{S-almost-perfect-secn} are based.

 The main results of the paper are proved in
Sections~\ref{S-h-nil-secn}\+-\ref{P1=F1-secn}.
 These are Theorem~\ref{T:S-h-nil-is-Matlis}, telling that
the projective dimension of the $R$\+module $R_S$ does not exceed~$1$
for an $S$\+h-nil ring~$R$; Theorem~\ref{T:characterization},
characterizing rings $R$ with a multiplicative subset $S$ for which
the class of $S$\+strongly flat $R$\+modules is covering, as mentioned
above; Proposition~\ref{P:S-strongly-flat-over-S-h-nil}, characterizing
$S$\+strongly flat modules over an $S$\+h-nil ring;
Proposition~\ref{P:P_1=F_1}, claiming that all modules of flat
dimension not exceeding~$1$ over an $S$\+almost perfect ring
have projective dimension not exceeding~$1$; and
Proposition~\ref{P:regular-P_1}, proving a partial inverse implication
and providing further characterizations of $S$\+almost perfect rings
in the case when $S$ consists of regular elements.

 The authors are grateful to Luigi Salce and Jan Trlifaj for helpful
discussions.
 We also thank the referee for several suggestions which helped us
to improve the revised version of the paper.

% -----------------------------------------------------------------------------
\section{Preliminaries} \label{preliminaries-secn}

 Throughout the paper $R$ will be a commutative ring and $S$ a multiplicative subset of~$R$.
 We let $R\overset{\phi}\to R_S$ be the localization map
and put $I=\Ker\phi$.

 We denote by $\Modl R$ the category of $R$\+modules.
%\CHG{and by $R_S$ the localization of $R$ at~$S$}.

 An $R$\+module $M$ is said to be \emph{$S$\+torsion} if for every element
$x\in M$ there exists an element $s\in S$ such that $sx=0$.
 An $R$\+module $M$ is $S$\+torsion if and only if $R_S\otimes_RM=0$.
 The maximal $S$\+torsion submodule of an $R$\+module $M$ is denoted
by $\Gamma_S(M)\subset M$.

 An $R$\+module $D$ is \emph{$S$\+divisible} if $sD=D$ for every $s\in S$,
and it is \emph{$S$\+h-divisible} if it is an epimorphic image of $R_S^{(\alpha)}$
for some cardinal~$\alpha$, or equivalently,  if every $R$\+module morphism
$R\to D$ extends to~$R_S$.

 An $R$\+module $M$ is \emph{$S$-reduced} if it has no nonzero $S$\+divisible submodules, and it is \emph{$S$\+h-reduced} if it has no nonzero $S$\+h-divisible submodules, or equivalently, if $\Hom_R(R_S, M)=0$.

 For every $n\geq 0$, let $\clP_n(R)$ ($\F_n(R)$\,) denote the class of all
$R$\+modules of projective (flat)  dimension at most~$n$.
 We drop to mention the ring $R$ when there is no possibility of confusion.

 An $R$\+module $M$ is \emph{$S$\+weakly cotorsion} ($S$\+$\WC$) if
$\Ext^1_R(R_S, M)=0$, and it is \emph{Enochs cotorsion} ($Cot$) if 
$\Ext^1_R(F, M)=0$ for every flat module $F$.

 An $R$\+module which is $S$\+h-reduced and  $S$\+weakly cotorsion
is called an \emph{$S$\+contramodule} (see~\cite{Pos}).

\begin{lem} \label{S-contra}
\begin{enumerate}
\item Let $f\colon R\to R'$ be a homomorphism of commutative rings,
        $S\subset R$ a multiplicative subset, and $M$ an $R'$\+module.
        Then $M$ is $S$\+h-reduced ($S$\+weakly cotorsion, or
        $S$\+contramodule) as an $R$\+module if and only if it is
        $f(S)$\+h-reduced ($f(S)$\+weakly cotorsion, or
        $f(S)$\+contramodule, respectively) as an $R'$\+module.
\item Let $S\subset T\subset R$ be
        multiplicative subsets of $R$.
        Then any $S$\+contramodule $R$\+module is also
        a $T$\+contramodule $R$\+module.
\item The full subcategory of all $S$\+contramodule $R$\+modules
        is closed under the kernels of morphisms, extensions, and
        infinite products in the category $\Modl R$.
\item Assume that\/ $\operatorname{p.dim}_R R_S\leq 1$.
        Then the full subcategory of $S$\+contramodule $R$\+modules
        is also closed under cokernels in $\Modl R$.
\end{enumerate}
\end{lem}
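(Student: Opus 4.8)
The plan is to reduce all four parts to the two orthogonality conditions built into the definition: an $R$\+module $M$ is an $S$\+contramodule exactly when $\Hom_R(R_S,M)=0$ and $\Ext^1_R(R_S,M)=0$, so that every assertion becomes a statement about the vanishing of $\Hom_R(R_S,-)$ and $\Ext^1_R(R_S,-)$. For part~(1) I would exploit that $R_S$ is \emph{flat} over $R$ and that localization commutes with base change, giving $R'\otimes_R R_S\cong R'_{f(S)}$. Fixing a projective resolution $P_\bullet\to R_S$ over $R$, flatness makes $R'\otimes_R P_\bullet\to R'_{f(S)}$ exact, hence a projective resolution over $R'$. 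The extension-of-scalars/restriction adjunction, applied degreewise to the $R'$\+module $M$, identifies the complexes $\Hom_R(P_\bullet,M)\cong\Hom_{R'}(R'\otimes_R P_\bullet,M)$, whence $\Ext^n_R(R_S,M)\cong\Ext^n_{R'}(R'_{f(S)},M)$ for every~$n$. The cases $n=0$ and $n=1$ give the equivalences for $S$\+h-reducedness and $S$\+weak cotorsion, and their conjunction the one for $S$\+contramodules.

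For part~(2) I would view the further localization $R_T=(R_S)_{\phi(T)}$ as an $R_S$\+module. Since restriction $\Modl{R_S}\to\Modl R$ is exact, its right adjoint $\Hom_R(R_S,-)$ carries injectives to injectives, so the Grothendieck spectral sequence of the composite $\Hom_{R_S}(R_T,-)\circ\Hom_R(R_S,-)=\Hom_R(R_T,-)$ is available, namely $E_2^{p,q}=\Ext^p_{R_S}(R_T,\Ext^q_R(R_S,M))\Rightarrow\Ext^{p+q}_R(R_T,M)$. When $M$ is an $S$\+contramodule the rows $q=0$ and $q=1$ of the $E_2$\+page vanish, so every term of total degree $\le1$ dies; therefore $\Hom_R(R_T,M)=0$ and $\Ext^1_R(R_T,M)=0$, i.e.\ $M$ is a $T$\+contramodule.

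Parts~(3) and~(4) are long-exact-sequence chases. Closure under products is immediate, since $\Hom_R(R_S,-)$ and $\Ext^1_R(R_S,-)$ commute with products in the second variable. For extensions, and for kernels of a morphism $f\colon M\to N$, applying $\Hom_R(R_S,-)$ squeezes the $\Hom$ and $\Ext^1$ groups of the middle term (resp.\ of $\Ker f$) between vanishing groups; the one point worth noting for kernels is that $\Img f$, being a submodule of the $S$\+h-reduced module $N$, is itself $S$\+h-reduced, so $\Hom_R(R_S,\Img f)=0$. Part~(4) is the only place the hypothesis enters: $\operatorname{p.dim}_R R_S\le1$ forces $\Ext^{\ge2}_R(R_S,-)=0$, and feeding this into the sequences $0\to\Ker f\to M\to\Img f\to0$ and $0\to\Img f\to N\to\Coker f\to0$ yields successively $\Ext^1_R(R_S,\Img f)=0$, then $\Hom_R(R_S,\Coker f)=0$, and finally $\Ext^1_R(R_S,\Coker f)=0$, so $\Coker f$ is an $S$\+contramodule.

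I expect the genuine difficulty to sit in parts~(1) and~(2): the flat base-change identification $R'\otimes_R R_S\cong R'_{f(S)}$ together with the transfer of projective resolutions, and the setup and collapse of the Grothendieck spectral sequence. Parts~(3)--(4) are then purely formal diagram chases, the sole extra ingredient being the projective-dimension bound invoked in~(4) to annihilate $\Ext^{\ge2}_R(R_S,-)$.
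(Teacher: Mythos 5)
Your proposal is correct, and it is essentially a fully worked-out version of the paper's proof, which consists almost entirely of citations. For part~(1) you prove precisely the isomorphism $\Ext^i_R(R_S,M)\cong\Ext^i_{R'}(R'_{f(S)},M)$, $i\ge0$, that the paper invokes without proof; your route (flatness of $R_S$ over $R$ makes $R'\otimes_RP_\bullet$ a projective resolution of $R'\otimes_RR_S\cong R'_{f(S)}$, then the extension/restriction adjunction identifies the Hom complexes) is the standard derivation. For part~(2) the paper calls it ``an easy computation using that $R_T$ is an $R_S$\+module'', citing \cite[Lemma~1.2]{Pos}; your Grothendieck spectral sequence rests on the same observation but is heavier machinery than needed: one can simply take a free presentation $F_1\to F_0\to R_T\to0$ with $F_0,F_1$ free $R_S$\+modules and note that $\Hom_R({-},M)$ and $\Ext^1_R({-},M)$ vanish on direct sums of copies of $R_S$, so a single exact-sequence chase gives $\Hom_R(R_T,M)=0=\Ext^1_R(R_T,M)$. (Your spectral sequence is nevertheless legitimate: $\Hom_R(R_S,{-})$ preserves injectives, being right adjoint to the exact restriction functor, and its collapse in total degrees $0$ and $1$ is exactly as you say.) Parts~(3) and~(4) are delegated by the paper to \cite[Proposition~1.1]{GL} and \cite[Theorem~1.2(a)]{Pcta}; your long-exact-sequence chases—including the two genuinely necessary observations, that $\Img f$ is $S$\+h-reduced because it embeds into the $S$\+h-reduced module $N$, and that $\operatorname{p.dim}_RR_S\le1$ kills $\Ext^{\ge2}_R(R_S,{-})$ so that $\Img f$ and then $\Coker f$ inherit the contramodule property—are exactly the arguments underlying those citations.
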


\begin{proof}
 (1) follows from the isomorphism $\Ext_R^i(R_S,M)\cong
\Ext_{R'}^i(R'_{f(S)},M)$, which holds for all $i\ge0$.

 (2) is an easy computation using that $R_T$ is an $R_S$\+module
(see~\cite[Lemma~1.2]{Pos} for details).

 (3) is a particular case of~\cite[Proposition~1.1]{GL}.

 (4) is also a particular case of~\cite[Proposition~1.1]{GL}
or~\cite[Theorem~1.2(a)]{Pcta}; see~\cite[Theorem~3.4(a)]{Pos}.
\end{proof}

 For a class $\C$ of $R$\+modules, $\C^{\perp_1}$ denotes the class of all
$R$\+modules $M$ such that $\Ext^1_R(C, M)=0$ for every $C\in \C$,
and symmetrically  $^{\perp_1}\C$ is the class of all $R$\+modules $M$
such that $\Ext^1_R(M, C)=0$ for every $C\in \C$. 
 
 A pair $(\A, \B)$ of classes of $R$\+modules is called a cotorsion pair if
$\A^{\perp_1}=\B$ and $^{\perp_1}\B=\A$.
 For the notion of a complete cotorsion pair we refer to~\cite{GT}.
 
 An $R$\+module $F$ is \emph{$S$-strongly flat} ($S$\+$\SF$) if
$\Ext^1_R(F, C)=0$ for every $S$\+weakly cotorsion $R$\+module $C$.
 We have that $(S$\+$\SF,S$\+$\WC)$ is a complete cotorsion pair;
$S$\+$\SF\subseteq \F_0$ and $Cot\subseteq S$\+$\W\C$.

\begin{lem}\label{L:SF}
 An $R$\+module $F$ is $S$\+strongly flat \iff $F$ is a direct summand
of an $R$\+module $G$ fitting into a short exact sequence of the type
\begin{equation}
\tag{SF} 0\la R^{(\beta)}\la G\overset{\pi}\la R_S^{(\gamma)}\la 0
\end{equation}
 for some cardinals $\beta$ and~$\gamma$. 
\end{lem}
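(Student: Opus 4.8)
The plan is to prove the two implications separately, the whole difficulty lying in the converse. For the implication that a direct summand of such a $G$ is $S$\+strongly flat, I would argue straight from the cotorsion pair. The left-hand class of any cotorsion pair is closed under direct summands, coproducts, and extensions and contains $R$; moreover $R_S\in S$\+$\SF$ by the very definition of $S$\+weak cotorsion (that is, $\Ext^1_R(R_S,C)=0$ for all $C\in S$\+$\WC$). Hence $R^{(\beta)}$ and $R_S^{(\gamma)}$ both lie in $S$\+$\SF$, and the extension $0\to R^{(\beta)}\to G\to R_S^{(\gamma)}\to 0$ forces $G\in S$\+$\SF$; a direct summand $F$ of $G$ is then $S$\+strongly flat as well. (Equivalently, applying $\Hom_R(-,C)$ for $C\in S$\+$\WC$ gives $\Ext^1_R(G,C)=0$, squeezed between $\Ext^1_R(R_S^{(\gamma)},C)\cong\prod_\gamma\Ext^1_R(R_S,C)=0$ and $\Ext^1_R(R^{(\beta)},C)=0$.)

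For the converse, let $F\in S$\+$\SF$. I would first choose a free presentation $0\to K\to R^{(\beta)}\to F\to 0$, then construct a short exact sequence $0\to K\to W\to R_S^{(\gamma)}\to 0$ with $W\in S$\+$\WC$ and cokernel a coproduct of copies of $R_S$, and finally push out along the inclusion $K\hookrightarrow R^{(\beta)}$. The pushout $G$ then sits in two exact sequences: the row $0\to W\to G\to F\to 0$ and the column $0\to R^{(\beta)}\to G\to R_S^{(\gamma)}\to 0$, the latter being exactly of the type~(SF). Since $F$ is $S$\+strongly flat and $W$ is $S$\+weakly cotorsion, we have $\Ext^1_R(F,W)=0$, so the row splits and $F$ becomes a direct summand of $G$, as required.

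The main obstacle is the middle step: I must exhibit a genuine special $S$\+$\WC$\+preenvelope of $K$ in a \emph{single} extension whose cokernel is precisely $R_S^{(\gamma)}$, rather than the transfinitely filtered approximation delivered by the general Eklof--Trlifaj machinery. I would index by the underlying set $\gamma$ of $\Ext^1_R(R_S,K)$ the extension $0\to K\to W\to R_S^{(\gamma)}\to 0$ classified by the element of $\Ext^1_R(R_S^{(\gamma)},K)\cong\prod_\gamma\Ext^1_R(R_S,K)$ whose components enumerate all of $\Ext^1_R(R_S,K)$. Applying $\Hom_R(R_S,-)$, the connecting map $\Hom_R(R_S,R_S^{(\gamma)})\to\Ext^1_R(R_S,K)$ hits every component and is therefore surjective, so the long exact sequence yields an embedding $\Ext^1_R(R_S,W)\hookrightarrow\Ext^1_R(R_S,R_S^{(\gamma)})$. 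What makes one step suffice is the vanishing $\Ext^1_R(R_S,R_S^{(\gamma)})=0$, which holds because $R_S^{(\gamma)}$ is an $R_S$\+module and $\Ext^1_R(R_S,M)=0$ for every $R_S$\+module $M$: for such $M$ the localization maps $M\to M_S$ and $R_S\to(R_S)_S$ are isomorphisms, so localizing an extension $0\to M\to E\to R_S\to 0$ (an exact operation, $R_S$ being flat) identifies $E$ with $E_S$ via the five lemma, and $E_S$ splits since $R_S$ is free over itself. This self-orthogonality of $R_S$ is precisely what collapses the filtration to the two-term shape~(SF).
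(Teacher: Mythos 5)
Your proof is correct and takes essentially the same approach as the paper: both hinge on the key vanishing $\Ext^1_R(R_S,R_S^{(\gamma)})=0$, which is the one fact the paper's proof records before invoking \cite[Corollary~6.13]{GT}. Your argument simply unfolds that citation---the one-step Eklof--Trlifaj special preenvelope (made possible precisely by this self-orthogonality of $R_S$), followed by the standard pushout-and-splitting trick, is exactly how that corollary is proved.
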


\begin{proof} Sufficiency is clear. 
 For the necessary condition, note that   $\Ext^1_R(R_S, R_S^{(\alpha)})=0$
for every cardinal $\alpha$.
 Hence, the conclusion follows by~\cite[Corollary~6.13]{GT}.
\end{proof}
 
 Recall that a  ring $R$ is \emph{right perfect} \iff every flat right
$R$\+module is projective.
 The \emph{right big finitistic dimension} $\operatorname{r.FPdim}R$
of a ring $R$ is the supremum of the projective dimensions of
the right $R$\+modules of finite projective dimension.

 An ideal $J$ of $R$ is \emph{left\/ $T$-nilpotent} if for every sequence
$a_1$, $a_2$,~\dots, $a_n$,~\dots\ of elements of $J$ there is~$m$
such that $a_1 a_2 \dotsm a_m=0$.
  
 The following characterization is well-known. 
\begin{lem}\label{L:perfect}
 Let $R$ be a commutative ring. Then the following conditions
are equivalent:
\begin{enumerate}
\item $R$ is a perfect ring.
\item The big finitistic dimension $\operatorname{FPdim}R$ is
equal to\/~$0$. 
\item $R$ is a finite product of local rings, each one with
a $T$-nilpotent maximal ideal.
\item $R$ is semilocal and semiartinian, i.e., every nonzero factor of $R$
contains a simple $R$\+module.
\end{enumerate}
Moreover, every regular element of a commutative perfect ring is invertible.
\end{lem}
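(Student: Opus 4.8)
The plan is to organize all the equivalences around~(1), treating $(1)\Leftrightarrow(3)\Leftrightarrow(4)$ as the commutative specialization of Bass's Theorem~P and isolating the finitistic-dimension clause $(1)\Leftrightarrow(2)$, which I expect to be the only genuinely delicate point. Throughout I write $J$ for the Jacobson radical of $R$ and use repeatedly the standard reformulation that an ideal $J$ is $T$-nilpotent if and only if $JM\neq M$ for every nonzero $R$-module~$M$. For $(1)\Leftrightarrow(3)$ I would invoke Bass's characterization of a perfect ring as one with $R/J$ semisimple and $J$ left $T$-nilpotent. Over a commutative ring, $R/J$ semisimple means $R/J\cong\prod_{i=1}^{n}R/\m_i$ is a finite product of fields, so $R$ is semilocal. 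Applying the $T$-nilpotence condition to a constant sequence shows that every element of $J$ is nilpotent, so $J$ is nil; hence the orthogonal idempotents of $R/J$ lift to $R$ and produce a ring decomposition $R\cong\prod_{i=1}^{n}R_i$ into local rings, each with $T$-nilpotent maximal ideal. The reverse implication is immediate.

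For $(1)\Leftrightarrow(4)$ the semilocal condition is already contained in~(3), so what remains is the equivalence of semiartinianness with $T$-nilpotence of~$J$. For $(1)\Rightarrow(4)$ I would observe that any nonzero factor $R/I$ of a perfect ring is again perfect, and that a nonzero perfect ring has nonzero socle: if no nonzero element of a local factor were annihilated by its (nil) maximal ideal, one could inductively choose $a_1,a_2,\dots\in\m_i$ with $a_n\cdots a_1\neq0$ for all~$n$, contradicting $T$-nilpotence. For $(4)\Rightarrow(1)$, semilocality already gives $R/J$ semisimple, and I would deduce $T$-nilpotence of~$J$ by ruling out a nonzero module $M$ with $JM=M$: writing $R_0\subseteq R_1\subseteq\cdots$ for the Loewy series of ideals of the semiartinian ring~$R$, which reaches $R$ at some ordinal $\lambda$ with $JR_{\alpha+1}\subseteq R_\alpha$, a transfinite induction using $sM=s(JM)=(sJ)M$ shows $R_\alpha M=0$ for every~$\alpha$, whence $M=RM=R_\lambda M=0$.

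The clause $(1)\Leftrightarrow(2)$ is where I expect the real work to lie, and it is the main obstacle. For $(1)\Rightarrow(2)$ I would use that a perfect ring admits projective covers, so every module has a minimal projective resolution whose length equals its projective dimension and all of whose differentials have image inside the radical of the preceding term. If $M$ had finite positive projective dimension~$n$, the top differential $d_n\colon Q_n\hookrightarrow Q_{n-1}$ would be an injective map of projectives with $\Img d_n\subseteq JQ_{n-1}$; but over a local factor the socle annihilates every entry of $d_n$, so $\operatorname{soc}(R_i)\cdot e\subseteq\Ker d_n$ is nonzero for any basis vector~$e$, forcing $Q_n=0$ and hence $\operatorname{FPdim}R=0$. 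For $(2)\Rightarrow(1)$ I would argue contrapositively through Bass's criterion that perfectness is equivalent to the descending chain condition on principal ideals: a strictly descending chain $Rb_1\supsetneq Rb_2\supsetneq\cdots$ with $b_{n+1}=c_nb_n$ yields a telescope sequence $0\to R^{(\bbN)}\to R^{(\bbN)}\to M\to0$ whose cokernel $M=\varinjlim(R\xrightarrow{c_1}R\xrightarrow{c_2}\cdots)$ is flat of projective dimension at most~$1$ and fails to be projective, precisely because the non-stabilizing chain makes it non-Mittag--Leffler, so $\operatorname{FPdim}R\geq1$. The two points requiring care are the vanishing $Q_n=0$ in the first implication and the non-projectivity of the Bass module in the second.

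Finally, for the concluding assertion I would use the decomposition $R\cong\prod_iR_i$ into local rings with nil maximal ideals. A regular element $s=(s_i)$ is regular in each factor; but in a local ring with nil maximal ideal every non-unit lies in $\m_i$ and is therefore nilpotent, hence a zero-divisor unless it is zero, so a regular $s_i$ must be a unit. Thus $s$ is invertible.
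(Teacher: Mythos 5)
Your proof is correct, but it takes a genuinely different route from the paper's. The paper treats the equivalences (1)--(4) as well known --- they are Bass's Theorem~P specialized to commutative rings --- and proves only the final clause about regular elements; your write-up instead reconstructs the classical arguments in full (idempotent lifting modulo the nil radical for (1)$\Leftrightarrow$(3), socles and the Loewy filtration for (1)$\Leftrightarrow$(4), minimal projective resolutions annihilated by the socle for (1)$\Rightarrow$(2), and the Bass module with its telescope presentation for (2)$\Rightarrow$(1)), all of which are sound, though they lean on other parts of Bass's theorem (the radical-theoretic characterization and the descending chain condition on principal ideals) as external input. On the one point where the two proofs actually overlap, the ``Moreover'' clause, the methods genuinely differ: the paper argues homologically from condition~(2) --- for $r$ regular, the sequence $0\to R\xrightarrow{r}R\to R/rR\to 0$ shows $\operatorname{p.dim}_R(R/rR)\le 1$, so $\operatorname{FPdim}R=0$ makes $R/rR$ projective, the sequence splits, $rR=eR$ with $e$ idempotent, and regularity of~$r$ forces $e=1$, whence $rR=R$; you argue structurally from condition~(3), reducing to the local factors, where a nonzero non-unit lies in the nil maximal ideal and is therefore nilpotent, hence a zero-divisor. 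The paper's argument is shorter and bypasses the product decomposition entirely; yours is more elementary once (3) is in hand, and it makes the lemma self-contained at the cost of re-proving classical facts.
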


\begin{proof}
 For the last statement note that if $r$~is a regular element of $R$,
then the $R$\+module $R/rR$ has projective dimension at most one,
hence it is projective.
 So $rR=eR$ for some idempotent element $e\in R$.
 Since $r$~is regular it must be $e=1$ and so $r$~is invertible.
\end{proof}
\section{$S$-divisible and $S$-torsion modules}
\label{S-divisible-torsion-secn}
%
% \CHG{In this section, $S$ will denote a multiplicative subset in
%a commutative ring~$R$.  
% We let $R\overset{\phi}\to R_S$ be the localization map
%and put $I=\Ker\phi$.}

 In this section we collect some properties of $S$-divisible,
$S$\+h-divisible, $S$\+torsion and $S$\+contramodule $R$-modules.

\begin{prop}\label{P:Matlis} The following hold true:
\begin{enumerate}
\item Every $S$\+divisible $R$\+module is annihilated by~$I$.
\item Assume that\/ $\operatorname{p.dim}_R R_S\leq 1$.
Then, $\operatorname{p.dim}_{R/I}R_S\leq 1$ and
every $S$\+divisible $R$\+module is $S$\+h-divisible.
\end{enumerate} 
\end{prop}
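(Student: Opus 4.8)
The plan is to treat the three assertions in turn, the third being the only substantial one. For~(1), I would use that $I=\Ker\phi$ consists precisely of those $r\in R$ annihilated by some $s\in S$. So, given an $S$\+divisible module $D$, an element $r\in I$ with $sr=0$, and an arbitrary $d\in D$, I would write $d=sd'$ using $sD=D$ and compute $rd=rsd'=(sr)d'=0$. Hence $ID=0$, which is~(1).

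For the inequality $\operatorname{p.dim}_{R/I}R_S\le1$ I would exploit that $R_S$ is a \emph{flat} $R$\+module and that it is already an $R/I$\+module: since $IR_S=0$ we have $R/I\otimes_RR_S=R_S$. Starting from a projective resolution $0\to P_1\to P_0\to R_S\to0$ of length one over $R$, I would apply $R/I\otimes_R{-}$. Flatness of $R_S$ gives $\Tor_1^R(R/I,R_S)=0$, so exactness is preserved, yielding $0\to R/I\otimes_RP_1\to R/I\otimes_RP_0\to R_S\to0$ with projective $R/I$\+modules on the left, whence $\operatorname{p.dim}_{R/I}R_S\le1$.

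The remaining assertion, that every $S$\+divisible $D$ is $S$\+h-divisible, is where the work lies. Using~(1) I would first view $D$ as a module over $R/I$, over which the image of $S$ consists of regular elements and, by the previous step, $\operatorname{p.dim}_{R/I}R_S\le1$; this reduces the claim to the case $I=0$, $S$ regular, $R\hookrightarrow R_S$. Now $D$ is $S$\+h-divisible exactly when the restriction map $\rho\colon\Hom_R(R_S,D)\to\Hom_R(R,D)=D$ is surjective. Writing $R_S=\varinjlim_{s}R$ as the direct limit over $(S,\mid)$ of copies of $R$ with multiplication maps, $\rho$ is the projection to the bottom term of the inverse system $(D)_{(S,\mid)}$ whose transition maps are multiplications by elements of $S$; these are surjective precisely because $D$ is $S$\+divisible. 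When $S$ has countable cofinality, a surjective tower has $\varprojlim^1=0$ and surjective projections, so $\rho$ is onto with no further hypothesis.

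The main obstacle is the general (uncountable) case, and this is exactly where $\operatorname{p.dim}_RR_S\le1$ becomes indispensable. Since each copy of $R$ is projective, the standard spectral sequence for $\Ext$ of a direct limit collapses to an identification $\Ext^i_R(R_S,D)\cong\varprojlim^i_{(S,\mid)}D$, so the bound on projective dimension says precisely that all higher derived limits $\varprojlim^{\ge2}$ vanish. Concretely, with $K=R_S/R$ one has $\operatorname{coker}(\rho)=\operatorname{Im}\!\big(D\to\Ext^1_R(K,D)\big)$, a subgroup of $\varprojlim^1_{(S,\mid)}D[s]$, so the obstruction to $S$\+h-divisibility lives in a $\varprojlim^1$ term, and controlling it is the crux. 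To finish I would either invoke the classical Matlis--Fuchs--Salce result in this regular setting, or make the argument self-contained using deconstructibility of the class of modules of projective dimension $\le1$: filtering $R_S$ by countably presented submodules of projective dimension $\le1$ and running a transfinite induction reduces the surjectivity of $\rho$ to the countable case already handled.
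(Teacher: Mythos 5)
Your treatment of part~(1) and of the bound $\operatorname{p.dim}_{R/I}R_S\le 1$ coincides with the paper's proof (the same elementwise computation, and the same tensoring of a length-one projective resolution of the flat $R$\+module $R_S$ with $R/I$), and your reduction of the h\+divisibility claim to the ring $R/I$ with the regular multiplicative set $\overline S=(S+I)/I$ is also exactly the paper's reduction. At the crux the paper simply applies \cite[Theorem~1.1 or Proposition~6.4]{AHHT} to $R/I$ and $\overline S$ to conclude that $\Gen R_S$ coincides with the class of $\overline S$\+divisible $R/I$\+modules; so your first option (citing the known theorem) is the paper's route. One caveat you should make explicit: the citation must be to the generalization in \cite{AHHT}, not literally to Matlis or to Fuchs--Salce, whose classical theorems are stated for \emph{domains} and their quotient fields (or localizations of domains). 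After killing $I$ you only know that $\overline S$ consists of regular elements; the ring $R/I$ can still have plenty of zero-divisors, so the domain-case results do not apply as stated. With the reference \cite{AHHT} your argument is complete and is the same as the paper's.

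Your fallback ``self-contained'' argument, however, has a genuine gap at its key step. Deconstructibility of $\clP_1$ does give a continuous filtration of $R_S$ with countably presented consecutive quotients of projective dimension at most~$1$; but to extend a morphism $M_\alpha\to D$ to $M_{\alpha+1}\to D$ in the transfinite induction you need $\Ext^1_R(M_{\alpha+1}/M_\alpha,\,D)=0$, and this is \emph{not} implied by the quotients being countably presented of projective dimension $\le1$. Concretely, over $R=\bbZ$ with $S=\{p^n\}$, the filtration $0\subset q\bbZ\subset\bbZ\subset\bbZ[1/p]$ (with $q\ne p$ a prime) has all consecutive quotients countably presented of projective dimension $\le 1$, yet $\Ext^1_{\bbZ}(\bbZ/q\bbZ,\,D)\cong D/qD$ need not vanish for a $p$\+divisible $D$ (take $D=\bbZ[1/p]$ itself), so the induction scheme breaks. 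To repair it one should instead filter $R_S/\phi(R)$ (which also lies in $\clP_1$), note that its filtration quotients are automatically $S$\+torsion, and then prove the lemma that a countably generated $S$\+torsion module $C$ with $\operatorname{p.dim}_RC\le1$ satisfies $\Ext^1_R(C,D)=0$ for every $S$\+divisible $D$ --- which itself requires real work (reduce to the countable submonoid $T\subseteq S$ annihilating $C$, use your countable case to present $D$ as an image of $R_T^{(\kappa)}$, use $\Ext^*_R(C,X)=0$ for $R_T$\+modules $X$ via the derived adjunction, and finish with $\Ext^2_R(C,{-})=0$ and Eklof's lemma). None of this is supplied by the phrase ``running a transfinite induction reduces the surjectivity of $\rho$ to the countable case already handled''; as written, the claimed reduction does not go through, and the missing lemma is in essence the content of the theorem of \cite{AHHT} that you were trying to avoid citing.
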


\begin{proof} 
 (1) Let $D$ be an $S$\+divisible $R$\+module and let $a\in I$.
 Then, there is $s\in S$ such that $as=0$.
 So given $x\in D$, we have $x=sy$ for some $y\in D$, and $ax=asy=0$.
 
(2) $R_S$ is the localization of $R/I$ at the multiplicative set
$\overline S=S+I/I$, which consists of regular elements of~$R/I$.
 Tensoring a projective presentation of the $R$\+module $R_S$ by $R/I$,
we see that $\operatorname{p.dim}_{R/I}R_S\leq 1$ (since $R_S$ is
a flat $R$\+module).
 Hence we can apply~\cite[Theorem~1.1 or Proposition~6.4]{AHHT}
to the ring $R/I$ and the regular multiplicative set $\overline S$
to conclude that the class of $R/I$-modules $\Gen R_S$ generated by
$R_S$ coincides with the class of all $R/I$-modules which are
$\overline S$\+divisible.
 This last is easily seen to coincide with the class of all $S$\+divisible
$R$-modules, since, by~(1), every $S$\+divisible $R$\+module is
annihilated by~$I$.
 Thus, the classes of the $S$\+h-divisible and $S$\+divisible
$R$\+modules coincide. 
%Thus an $S$-divisible module $D$ is an epimorphic image of
%a free $R_S$-module.
%Hence $\Ext^1_R(R_S, D)=0$, since
%$\Ext^1_R(R_S, R_S^{(\alpha0)}=0$ and p.dim$_R R_S\leq 1$.
\end{proof}
\begin{lem}\label{L:annihil}
 The following hold true:
\begin{enumerate}
% \item Every $S$-divisible module is annihilated by $I$.
\item Assume that\/ $\operatorname{p.dim}_R R_S\leq 1$.
        Then an $R$\+module $C$ is $S$\+divisible if and only if
        it is annihilated by $I$ and\/ $\Ext^1_R(R_S/\phi(R), C)=0$.
        Moreover, every $S$-divisible module is $S$-weakly cotorsion.
\item If $R_S$ is a perfect ring, then the class\/
        $\clP_1^{\perp_1}\subset \Modl R$ contains
        the class of $S$\+h-divisible $R$\+modules.
\end{enumerate}
\end{lem}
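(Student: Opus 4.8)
The plan is to treat the two parts separately, handling~(1) by reducing to the ring $A=R/I$, over which $S$ maps to a set of regular elements, and handling~(2) by a base-change argument over~$R_S$. Throughout part~(1) I keep the standing assumption $\operatorname{p.dim}_RR_S\le1$ and write $K=R_S/\phi(R)$, recording the short exact sequence $0\to\phi(R)\to R_S\to K\to0$ with $\phi(R)\cong R/I$. I would first settle the \emph{Moreover} clause. Given an $S$\+divisible module $C$, Proposition~\ref{P:Matlis} shows that $C$ is annihilated by $I$ and is $S$\+h-divisible, so there is a presentation $0\to N\to R_S^{(\alpha)}\to C\to0$. Applying $\Hom_R(R_S,-)$ and using that $\Ext^1_R(R_S,R_S^{(\alpha)})=0$ (as noted in the proof of Lemma~\ref{L:SF}) together with $\Ext^2_R(R_S,N)=0$ (from $\operatorname{p.dim}_RR_S\le1$), I obtain $\Ext^1_R(R_S,C)=0$, i.e.\ $C$ is $S$\+weakly cotorsion. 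For the forward implication of the equivalence I then feed $\Ext^1_R(R_S,C)=0$ into the cohomology sequence of $0\to\phi(R)\to R_S\to K\to0$: this identifies $\Ext^1_R(K,C)$ with the cokernel of the restriction map $\Hom_R(R_S,C)\to\Hom_R(R/I,C)\cong C$, and $S$\+h-divisibility says exactly that this map is surjective, whence $\Ext^1_R(K,C)=0$.

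For the reverse implication of~(1), suppose $C$ is annihilated by $I$ and $\Ext^1_R(K,C)=0$; I must prove $sC=C$ for every $s\in S$. Here I pass to $A=R/I$, over which $\bar s=\phi(s)$ is regular and $R_S=A_{\bar S}$. Both $K$ and $C$ are $A$\+modules (note $I R_S=0$, hence $IK=0$), and any $R$\+linear map between $A$\+modules is automatically $A$\+linear, so the forgetful map $\Ext^1_A(K,C)\to\Ext^1_R(K,C)$ is injective and thus $\Ext^1_A(K,C)=0$. The crucial device is the ``self-similar'' short exact sequence $0\to A/\bar sA\to K\to K\to0$ coming from multiplication by $\bar s$ on $A_{\bar S}$; together with $\operatorname{p.dim}_AK\le1$ (which follows from $\operatorname{p.dim}_AR_S\le1$ in Proposition~\ref{P:Matlis} and the sequence $0\to A\to R_S\to K\to0$), its cohomology sequence squeezes $\Ext^1_A(A/\bar sA,C)$ between $\Ext^1_A(K,C)=0$ and $\Ext^2_A(K,C)=0$. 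Since $\bar s$ is regular one has $\Ext^1_A(A/\bar sA,C)\cong C/sC$, so $C/sC=0$ for all $s$, as desired.

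For part~(2), I first reduce to showing $R_S^{(\gamma)}\in\clP_1^{\perp_1}$: an $S$\+h-divisible module $D$ admits a presentation $0\to N\to R_S^{(\gamma)}\to D\to0$, and for any $P\in\clP_1$ the exact sequence $\Ext^1_R(P,R_S^{(\gamma)})\to\Ext^1_R(P,D)\to\Ext^2_R(P,N)=0$ exhibits $\Ext^1_R(P,D)$ as a quotient of $\Ext^1_R(P,R_S^{(\gamma)})$. To kill the latter I use a base-change isomorphism: as $R_S$ is flat over $R$, tensoring an $R$\+projective resolution of $P$ by $R_S$ yields an $R_S$\+projective resolution of $P_S=P\otimes_RR_S$, and the extension-of-scalars adjunction gives $\Ext^i_R(P,N)\cong\Ext^i_{R_S}(P_S,N)$ for every $R_S$\+module $N$, in particular for $N=R_S^{(\gamma)}$. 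Now $\operatorname{p.dim}_{R_S}P_S\le1$, and because $R_S$ is perfect its big finitistic dimension is zero (Lemma~\ref{L:perfect}), so $P_S$ is a projective $R_S$\+module; hence $\Ext^1_{R_S}(P_S,R_S^{(\gamma)})=0$ and therefore $\Ext^1_R(P,R_S^{(\gamma)})=0$.

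I expect the main obstacle to be the reverse direction of~(1): passing from the single hypothesis $\Ext^1_R(K,C)=0$ to divisibility by each individual element~$s$. The two ingredients that make this work—the injectivity $\Ext^1_A(K,C)\hookrightarrow\Ext^1_R(K,C)$ and the self-reproducing sequence $0\to A/\bar sA\to K\to K\to0$—both rely on the reduction to $A=R/I$ (which turns each $s$ into a regular element) and on the bound $\operatorname{p.dim}_AK\le1$; assembling these correctly is the delicate point. Part~(2), by contrast, is essentially a base-change computation once the quotient reduction to $R_S^{(\gamma)}$ is in place.
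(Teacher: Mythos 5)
Your proof is correct, and most of it runs parallel to the paper's: the \emph{Moreover} clause and the forward implication of~(1) use Proposition~\ref{P:Matlis} together with the long exact sequence induced by $0\to R/I\to R_S\to R_S/\phi(R)\to 0$, exactly as the paper does, and your part~(2) is essentially the paper's argument (the base-change isomorphism $\Ext^1_R(M,Y)\cong\Ext^1_{R_S}(M\otimes_RR_S,Y)$, projectivity of $M\otimes_RR_S$ over the perfect ring $R_S$, then passage to quotients of $R_S^{(\gamma)}$ using $\Ext^2_R(M,{-})=0$ for $M\in\clP_1$). Where you genuinely diverge is the reverse implication of~(1). The paper reads it off the \emph{same} exact sequence used for the forward direction: if $IC=0$ and $\Ext^1_R(R_S/\phi(R),C)=0$, that sequence shows the evaluation map $\Hom_R(R_S,C)\to C[I]=C$ is surjective, so $C$ is an epimorphic image of an $R_S$\+module and hence $S$\+divisible --- a one-line conclusion requiring no further input. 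You instead descend to $A=R/I$, where $\overline S$ consists of regular elements, note that $\Ext^1_A(K,C)\to\Ext^1_R(K,C)$ is injective (any $R$\+linear splitting between $A$\+modules is $A$\+linear), and squeeze $\Ext^1_A(A/\bar sA,C)\cong C/sC$ between $\Ext^1_A(K,C)=0$ and $\Ext^2_A(K,C)=0$ via the self-similar sequence $0\to A/\bar sA\to K\stackrel{\bar s}\to K\to 0$ and the bound $\operatorname{p.dim}_AK\le 1$. All the ingredients you invoke do hold (regularity of $\overline S$ and $\operatorname{p.dim}_{R/I}R_S\le1$ are recorded in Proposition~\ref{P:Matlis}, and the kernel of $\bar s$ on $K$ is indeed $A/\bar sA$ precisely because $\overline S$ is regular), so your route is valid; it is the classical Matlis-style argument, heavier than the paper's but with the advantage of producing the divisibility $sC=C$ explicitly, element by element, and of making clear that the equivalence in~(1) is really a statement over $R/I$ about a regular multiplicative subset.
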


\begin{proof}
 (1) For every $R$\+module $C$, we put $C[I]=\{x\in C\mid Ix=0\}$.
 The exact sequence $0\to R/I\to R_S\to R_S/\phi(R)\to 0$ induces
an exact sequence
\begin{multline}
\tag{$\ast$}
 0\la \Hom_R(R_S/\phi(R)), C)\la \Hom_R(R_S, C)\la C[I] \\
 \la \Ext^1_R(R_S/\phi(R)), C)\la \Ext^1_R(R_S, C).
\end{multline}
 If $C$ is an $S$\+divisible $R$\+module, then,
by Proposition~\ref{P:Matlis}, $C[I]=C$ and $C$ is $S$\+h-divisible,
hence $\Hom_R(R_S, C)\to C$ is an epimorphism.
 For every $R_S$\+module $Y$, we have $\Ext^1_R(R_S, Y)=0$ and
thus $\Ext^1_R(R_S, C)=0$, since $\operatorname{p.dim}_R R_S\leq 1$.
 In particular, $C$ is $S$-weakly cotorsion and from sequence~($\ast$)
we conclude that $\Ext^1_R(R_S/\phi(R), C)=0$.
 
 Conversely, assume that $C$ is an $R/I$-module such that 
$\Ext^1_R(R_S/\phi(R), C)\allowbreak=0$.
 Then, by sequence~($\ast$), $\Hom_R(R_S, C)\to C$ is an epimorphism,
that is $C$ is $S$\+divisible.
 
 (2)  Let $M\in \clP_1(R)$, and let $Y$ be an $R_S$-module.
 Then $\Ext^1_R(M, Y)\cong \Ext^1_{R_S}(M\otimes_RR_S, Y)=0$,
since $M\otimes_RR_S\in\clP_1(R_S)$ and therefore $M\otimes_RR_S$
is a projective $R_S$-module (the ring $R_S$ is perfect).
 Every $S$\+h-divisible $R$\+module $D$ is an epimorphic image of
a free $R_S$\+module, thus also $\Ext^1_R(M, D)=0$.
 \end{proof}

\begin{lem}\label{L:S-torsion} 
 The following hold true:
\begin{enumerate} 
\item For every $R$\+module $M$, the $R$\+module
$\Tor_1^R(M, R_S/\phi(R))$ is isomorphic to\/ $\Gamma_S(M)/IM$.
\item If $F$ is a flat $R$\+module, then $IF=\Gamma_S(F)$.
\end{enumerate}
\end{lem}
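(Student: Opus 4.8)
The plan is to derive both statements from the long exact sequence of $\Tor$ associated with the short exact sequence
\[
0 \la R/I \la R_S \la R_S/\phi(R) \la 0,
\]
where the first map is induced by $\phi$ and we identify $\phi(R)\cong R/I$. The key preliminary observation, which I would record first, is that $IM\subseteq\Gamma_S(M)$ for every $R$\+module $M$: indeed, each $a\in I=\Ker\phi$ satisfies $sa=0$ for some $s\in S$, so that $s(am)=0$ and hence $am\in\Gamma_S(M)$ for all $m\in M$.

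For part~(1), I would tensor the displayed sequence with $M$ over $R$ and use that $R_S$ is a flat $R$\+module, so that $\Tor_1^R(M,R_S)=0$. The tail of the resulting long exact sequence then reads
\[
0 \la \Tor_1^R(M, R_S/\phi(R)) \la M/IM \la M\otimes_R R_S,
\]
where I have used $M\otimes_R(R/I)\cong M/IM$. Thus $\Tor_1^R(M,R_S/\phi(R))$ is identified with the kernel of the localization map $M/IM\to M_S$, \ $x+IM\mapsto x/1$.

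It remains to compute this kernel. An element $x+IM$ lies in it precisely when $x/1=0$ in $M_S$, i.e.\ when $sx=0$ for some $s\in S$, which is exactly the condition $x\in\Gamma_S(M)$. Hence the kernel equals the image of $\Gamma_S(M)$ in $M/IM$, namely $(\Gamma_S(M)+IM)/IM$; by the preliminary inclusion $IM\subseteq\Gamma_S(M)$ this is just $\Gamma_S(M)/IM$. This yields the desired isomorphism $\Tor_1^R(M,R_S/\phi(R))\cong\Gamma_S(M)/IM$.

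Part~(2) is then immediate: when $F$ is flat we have $\Tor_1^R(F,R_S/\phi(R))=0$, so part~(1) forces $\Gamma_S(F)/IF=0$, i.e.\ $\Gamma_S(F)=IF$ (the reverse inclusion $IF\subseteq\Gamma_S(F)$ being the preliminary observation). The only point requiring care throughout is the identification of the kernel of $M/IM\to M_S$ with $\Gamma_S(M)/IM$, which hinges on both the characterization of $S$\+torsion via vanishing in the localization and the inclusion $IM\subseteq\Gamma_S(M)$; I expect no serious obstacle beyond keeping these two ingredients straight.
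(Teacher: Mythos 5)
Your proof is correct and follows exactly the paper's argument: tensoring the short exact sequence $0\to R/I\to R_S\to R_S/\phi(R)\to 0$ with $M$, using flatness of $R_S$, and identifying the kernel of $M/IM\to M\otimes_R R_S$ with $\Gamma_S(M)/IM$. The only difference is that you spell out the details the paper leaves implicit, notably the inclusion $IM\subseteq\Gamma_S(M)$ needed for the quotient $\Gamma_S(M)/IM$ to make sense.
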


\begin{proof}
 (1) Tensoring by $M$ the short exact sequence
$0\to R/I\to R_S \to R_S/\phi(R)\to 0$, we obtain the exact sequence
\[
 0\la \Tor_1^R(M, R_S/\phi(R))\la M/IM\la M\otimes_RR_S.
\]
 The conclusion follows recalling that the maximal $S$\+torsion submodule
$\Gamma_S(M)\subset M$ is the kernel of the morphism $M\to M\otimes_RR_S$.
 
 (2) Follows from~(1).
\end{proof}

\begin{lem}\label{L:torsion-Hom}
 The following hold true:
\begin{enumerate}
\item If $N$ is an $S$\+torsion $R$\+module, then, for every
$R$\+module $X$, the $R$\+module\/ $\Hom_R(N, X)$ is
an $S$\+contramodule (i.e., $S$\+h-reduced and $S$\+weakly cotorsion).
\item If $Q$ is an $S$\+contramodule $R$\+module, then, for every
$R$\+module $Y$, the $R$\+module\/ $\Hom_R(Y,Q)$ is also
an $S$\+contramodule.
\end{enumerate}
\end{lem}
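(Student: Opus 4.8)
The plan is to treat the two assertions separately, reducing each to the closure properties of the class of $S$\+contramodules recorded in Lemma~\ref{S-contra}(3): its closure under kernels of morphisms and under infinite products, and hence (since equalizers and inverse limits are built from these) under arbitrary small limits in $\Modl R$.

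For part~(2) I would begin from a free presentation $R^{(\mu)}\to R^{(\lambda)}\to Y\to 0$ of the $R$\+module~$Y$. Applying the left-exact contravariant functor $\Hom_R(-,Q)$ yields an exact sequence
\begin{equation*}
 0\la \Hom_R(Y,Q)\la Q^{\lambda}\la Q^{\mu},
\end{equation*}
exhibiting $\Hom_R(Y,Q)$ as the kernel of a morphism between two products of copies of~$Q$. Since $Q$ is an $S$\+contramodule and this class is closed under products and kernels, both $Q^{\lambda}$ and $Q^{\mu}$, and therefore $\Hom_R(Y,Q)$, are $S$\+contramodules. For the $S$\+h-reduced half alone one can also argue directly, using the two Hom-tensor adjunctions to obtain $\Hom_R(R_S,\Hom_R(Y,Q))\cong\Hom_R(Y,\Hom_R(R_S,Q))$ and noting that the right-hand side vanishes because $Q$ is $S$\+h-reduced. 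Either way, part~(2) presents no real difficulty.

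Part~(1) is more delicate, because here $X$ is arbitrary and $\Hom_R(N,X)$ need not be $S$\+torsion, so a free presentation of $N$ is of no use. Instead I would exploit the $S$\+torsion structure of~$N$. The first step is a reduction to the case in which $N$ is annihilated by a single element $s\in S$: writing $N[s]=\{x\in N\mid sx=0\}$, the submodules $N[s]$ form a directed family (as $N[s]+N[t]\subseteq N[st]$ with $st\in S$) whose union is all of $N$ because $N$ is $S$\+torsion, so $N=\varinjlim_{s\in S}N[s]$ and hence $\Hom_R(N,X)=\varprojlim_{s\in S}\Hom_R(N[s],X)$. As $S$\+contramodules are closed under limits, it then suffices to prove that each $\Hom_R(N[s],X)$ is an $S$\+contramodule.

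In the single-element case I would set $M=\Hom_R(N,X)$ with $sN=0$; then $sM=0$ as well, since $(sf)(x)=f(sx)=0$ for all $f\in M$ and $x\in N$. The crux is the observation that multiplication by~$s$ acts invertibly on $R_S$, hence induces an isomorphism on each $R$\+module $\Ext_R^i(R_S,M)$ by functoriality in the first argument, while it acts as zero on $M$, hence induces the zero map on $\Ext_R^i(R_S,M)$ by functoriality in the second argument. These two endomorphisms coincide, both being the $R$\+module action of~$s$, so that action is simultaneously invertible and zero and $\Ext_R^i(R_S,M)=0$ for all $i\ge 0$; the cases $i=0$ and $i=1$ give that $M$ is $S$\+h-reduced and $S$\+weakly cotorsion, i.e.\ an $S$\+contramodule. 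The main obstacle is thus confined to the weakly cotorsion part of~(1): everything hinges on making the passage from a single $s$ to general $S$\+torsion $N$ precise, which rests on the compatibility of $\Hom_R(-,X)$ with the filtered colimit $N=\varinjlim N[s]$ and on the closure of $S$\+contramodules under the resulting inverse limit furnished by Lemma~\ref{S-contra}(3).
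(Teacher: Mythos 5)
Your proof of part~(2) is essentially identical to the paper's: a free presentation of $Y$ exhibits $\Hom_R(Y,Q)$ as the kernel of a morphism between two products of copies of $Q$, and Lemma~\ref{S-contra}~(3) finishes. For part~(1), however, you take a genuinely different and correct route. The paper works directly with the full $S$\+torsion module $N$: $S$\+h-reducedness comes from the adjunction $\Hom_R(R_S,\Hom_R(N,X))\cong\Hom_R(R_S\otimes_RN,X)=0$, and for the $\Ext$\+vanishing it embeds $X$ into an injective module $E$, uses the duality $\Ext^n_R(R_S,\Hom_R(N,E))\cong\Hom_R(\Tor^R_n(R_S,N),E)=0$, and then squeezes $\Ext^1_R(R_S,\Hom_R(N,X))$ between $\Hom_R(R_S,\Img\nu)=0$ (the image $\Img\nu\subseteq\Hom_R(N,E/X)$ being $S$\+h-reduced) and $\Ext^1_R(R_S,\Hom_R(N,E))=0$. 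You instead reduce to a module annihilated by a single $s\in S$ via the directed union $N=\varinjlim_{s}N[s]$, rewrite $\Hom_R(N,X)$ as the inverse limit $\varprojlim_s\Hom_R(N[s],X)$, and invoke closure of $S$\+contramodules under kernels and products (hence under small limits, since an inverse limit is a kernel of a map between products). Your single-element step --- multiplication by~$s$ is invertible on $R_S$ and zero on $M$, and the two induced endomorphisms of $\Ext^i_R(R_S,M)$ coincide, forcing $\Ext^i_R(R_S,M)=0$ --- is a sound bifunctoriality argument, and in fact the paper uses exactly this observation elsewhere (``any $R$\+module annihilated by an element of $S$ is an $S$\+contramodule'' in the proof of Proposition~\ref{product-decomp-exists}). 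What your route buys is the avoidance of injective embeddings and of the $\Ext$--$\Tor$ duality isomorphism, making the argument more self-contained; the paper's route is shorter given those standard isomorphisms and requires no decomposition of $N$ or limit bookkeeping.
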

 
\begin{proof}
 (1) By adjunction, $\Hom_R(R_S, \Hom_R(N, X))\cong
\Hom_R(R_S\otimes N, X)$, and $R_S\otimes N=0$ by assumption,
hence $\Hom_R(N, X)$ is $S$\+h-reduced.
 
 Consider an injective $R$\+module $E$ containing~$X$.
 Then for every $n\ge1$ we have $\Ext^n_R(R_S, \Hom_R(N, E))\cong
\Hom_R(\Tor^R_n(R_S, N), E)=0$ and an exact sequence
 \[0\la \Hom_R(N, X)\la \Hom_R(N, E)\overset{\nu}\la \Hom_R(N, E/X).\]
  By the above argument, $\Img \nu$ is an $S$\+h-reduced $R$\+module,
hence we obtain:
\begin{multline*}
 0=\Hom_R(R_S, \Img \nu)\la \Ext^1_R(R_S,\Hom_R(N, X)) \\
 \la \Ext^1_R(R_S, \Hom_R(N, E))=0.
\end{multline*}

 (2) Presenting $Y$ as the cokernel of a morphism of free $R$\+modules
$E\to F$, we see that $\Hom_R(Y,Q)$ is the kernel of the induced morphism
$\Hom_R(F,Q)\to\Hom_R(E,Q)$ between two products of copies of~$Q$.
 Since the full subcategory of $S$\+contramodule $R$\+modules is closed
under the kernels and products in $\Modl R$ by Lemma~\ref{S-contra}~(3),
the assertion follows.
\end{proof}

\begin{lem} \label{L:regular-in-R_S}
% \CHG{Let $S$ be a multiplicative subset of a commutative ring $R$, and}
 Let $\Sigma$ denote the set of all regular elements of $R$.
 Then the following hold true:
\begin{enumerate}
\item $\Sigma_S$ is a subset of the set of all regular elements
      of~$R_S$.
\item If $R_S$ is a perfect ring, then $R_S$ coincides with
      its total ring of quotients $Q(R_S)$.
\item If $R_S$ is a perfect ring and $R/sR$ is a perfect ring
      for every $s\in S$, then $R/rR$ is a perfect ring for every
      regular element $r\in R$.
\end{enumerate}
\end{lem}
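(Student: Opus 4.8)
The plan is to treat the three parts in sequence; parts~(1) and~(2) are short, and part~(3) carries the essential content.

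For~(1), I would take a regular element $r\in R$ and check directly that its image $\phi(r)=r/1$ is regular in $R_S$. If $(r/1)(a/s)=0$ in $R_S$ for some $a\in R$ and $s\in S$, then by the definition of localization there is $u\in S$ with $ura=0$ in~$R$; regularity of $r$ forces $ua=0$, so $a/s=0$ in $R_S$. Since the elements of $S$ become invertible in $R_S$ and a product of a regular element with a unit is regular, this shows that the localized set $\Sigma_S$ consists of regular elements of $R_S$. Part~(2) is then immediate from the last sentence of Lemma~\ref{L:perfect}: every regular element of a commutative perfect ring is invertible. Applied to the perfect ring $R_S$, this says that every regular element of $R_S$ is already a unit, so localizing $R_S$ at its regular elements returns $R_S$; that is, $R_S=Q(R_S)$.

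The interesting part is~(3), and the idea is to exhibit $R/rR$ as a quotient ring of some $R/sR$. Let $r\in R$ be regular. By~(1) the element $\phi(r)$ is regular in $R_S$, and since $R_S$ is perfect, the last statement of Lemma~\ref{L:perfect} makes $\phi(r)$ invertible. Hence $rR_S=R_S$, so $R_S\otimes_R(R/rR)\cong R_S/rR_S=0$, which means $R/rR$ is an $S$\+torsion $R$\+module. Applying $S$\+torsion to the element $1+rR$, I obtain $s\in S$ with $s\in rR$, say $s=rt$ with $t\in R$. Then $sR\subseteq rR$, and the natural ring surjection $R/sR\twoheadrightarrow R/rR$ presents $R/rR$ as a homomorphic image of the perfect ring $R/sR$. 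It remains to invoke the stability of perfectness under quotients: by characterization~(4) of Lemma~\ref{L:perfect}, a quotient of a semilocal semiartinian ring is again semilocal (its maximal ideals correspond to a subset of those of $R/sR$) and semiartinian (every nonzero quotient of $R/rR$ is also a nonzero quotient of $R/sR$, hence contains a simple module), so $R/rR$ is perfect.

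The main obstacle is the key step in~(3): realizing that invertibility of $\phi(r)$ in the perfect ring $R_S$ forces $R/rR$ to be $S$\+torsion, which is exactly what produces an element $s=rt\in S$ lying in $rR$ and hence the surjection $R/sR\to R/rR$. Once this bridge between the hypothesis on $R_S$ and the hypotheses on the rings $R/sR$ is built, the conclusion follows from the permanence of perfectness under homomorphic images.
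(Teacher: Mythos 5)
Your proof is correct and takes essentially the same route as the paper's: (1) by direct computation with the localization, (2) by the final statement of Lemma~\ref{L:perfect} applied to $R_S$, and (3) by using the invertibility of $\phi(r)$ in the perfect ring $R_S$ to produce an element $s\in S\cap rR$, so that $R/rR$ is a quotient of the perfect ring $R/sR$. The paper states step~(3) more tersely; your detour through ``$R/rR$ is $S$\+torsion'' and your justification that perfectness passes to quotients via Lemma~\ref{L:perfect}~(4) simply make explicit what the paper leaves implicit.
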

 
\begin{proof}  
 (1)
 If $r$ is a regular element in $R$ and $rb/s=0$ in $R_S$, then
there is an element $t\in S$ such that $trb=0$ in $R$,
hence $tb=0$ in $R$, that is $b/s=0$ in $R_S$.
 
 (2) is an immediate consequence of Lemma~\ref{L:perfect}.
 
 (3) If $r\in R$ is a regular element, then by~(1) and~(2) there is
an element $s\in S$ such that $s\in rR$.
 Then $R/rR$ is a perfect ring, being a quotient ring of~$R/sR$.
\end{proof}

\section{$S$-strongly flat modules} \label{strongly-flat-secn}
%

% \CHG{In this section, $S$ will be again a multiplicative subset of
%a commutative ring $R$, \,$\phi\colon R\to R_S$ the localization map
%and $I=\Ker\phi$. }
 
 In this section we consider the problem of the existence of
$S$\+strongly flat covers and show that if every $R$-module admits
an $S$\+strongly flat cover then the ring $R_S$ is perfect and for
every $s\in S$ all the rings $R/sR$ are perfect.

\begin{lem}\label{L:S-strongly}
 Assume that $N$ is an $S$\+strongly flat $R$-module.
 Then following hold:
\begin{enumerate}
\item $N\otimes_RR_S$ is a projective $R_S$\+module.
%\item The $S$-torsion submodule of $N$ is $IN$.
\item For every $s\in S$, $N/sN$ is a projective $R/sR$\+module.
\item If $N$ is moreover $S$-divisible, then $N$ is a projective
$R_S$\+module.
\end{enumerate}
\end{lem}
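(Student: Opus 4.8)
The plan is to reduce all three assertions to the structural description of $S$\+strongly flat modules furnished by Lemma~\ref{L:SF}: the module $N$ is a direct summand of some $G$ sitting in a short exact sequence $0\to R^{(\beta)}\to G\to R_S^{(\gamma)}\to 0$. Since each of the three claims concerns a functor (tensoring with $R_S$, reduction modulo $s$) that preserves direct summands, it will suffice in every case to prove the statement for $G$ and then restrict to the summand $N$.

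For part~(1) I would apply $-\otimes_R R_S$ to the defining sequence. Because $R_S$ is a flat $R$\+module and $R_S\otimes_R R_S\cong R_S$, this yields an exact sequence $0\to R_S^{(\beta)}\to G\otimes_R R_S\to R_S^{(\gamma)}\to 0$ of $R_S$\+modules whose right-hand term is free. Hence the sequence splits, $G\otimes_R R_S$ is free over $R_S$, and the direct summand $N\otimes_R R_S$ is a projective $R_S$\+module.

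For part~(2) I fix $s\in S$ and apply $-\otimes_R R/sR$ to the defining sequence. Flatness of $R_S$ forces $\Tor_1^R(R_S^{(\gamma)},R/sR)=0$, while invertibility of $s$ in $R_S$ gives $R_S^{(\gamma)}/sR_S^{(\gamma)}=0$. The long exact $\Tor$ sequence therefore collapses to an isomorphism $G/sG\cong (R/sR)^{(\beta)}$, so $G/sG$ is $R/sR$\+free. Passing to the summand $N/sN=N\otimes_R R/sR$ then gives projectivity over $R/sR$.

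For part~(3) the additional hypothesis that $N$ is $S$\+divisible lets me identify $N$ with $N\otimes_R R_S$, after which part~(1) immediately finishes the argument. The key step --- which I expect to be the only genuinely delicate point --- is to verify that the localization map $N\to N\otimes_R R_S$ is an isomorphism. Surjectivity is immediate from $S$\+divisibility, since any element $x/s$ of $N\otimes_R R_S$ equals $y/1$ whenever $x=sy$. For injectivity I would use that the kernel of this map is $\Gamma_S(N)$, which equals $IN$ for a flat module by Lemma~\ref{L:S-torsion}~(2), and that $IN=0$ because every $S$\+divisible module is annihilated by $I$ by Proposition~\ref{P:Matlis}~(1); here the flatness of $N$ (recall $S$\+$\SF\subseteq\F_0$) is essential, as it is what makes Lemma~\ref{L:S-torsion}~(2) applicable.
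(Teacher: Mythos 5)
Your proposal is correct and follows essentially the same route as the paper: parts (1) and (2) by tensoring the (pure) exact sequence of Lemma~\ref{L:SF} with $R_S$ and $R/sR$ respectively, and part (3) by combining Proposition~\ref{P:Matlis}~(1) with Lemma~\ref{L:S-torsion}~(2) to get $\Gamma_S(N)=IN=0$, identifying $N$ with $N\otimes_R R_S$, and invoking part (1). Your write-up merely makes explicit the details (the splitting over $R_S$, the collapse of the $\Tor$ sequence, and the surjectivity/injectivity of the localization map) that the paper leaves to the reader.
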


\begin{proof} (1) and (2) follow easily from Lemma~\ref{L:SF} tensoring
by $R_S$ and by $R/sR$ the pure exact sequence~(SF).

(3) Assume that $N$ is $S$\+divisible.
 Then $IN=0$ by Proposition~\ref{P:Matlis}~(1),
hence, by Lemma~\ref{L:S-torsion}~(2), $N$ is $S$\+torsion free.
 This means that $N$ is isomorphic to $N\otimes_RR_S$,
hence $N$ is a projective $R_S$\+module by part~(1).
\end{proof}

 We introduce a ``restricted'' notion of superfluous submodule which
will be useful in the characterization of covers.

\begin{defn}  Let $(\A, \B)$ be a cotorsion pair in $\ModR$.
 Let $A$ be a right $R$-module in the class $\A$ and let $B\in \B$
be a submodule of~$A$.

 We say that $B$ is \emph{$\B$-superfluous} in $A$, and
write $B\prec  A$, if for every submodule $H$ of $A$,
$H+B=A$ and $H\cap B\in \B$ imply $H=A$.
\end{defn}

 The following fact will be used throughout in the sequel;
its proof is straightforward.

\begin{lem}\label{L:endomorph}
 Let $0\to B\hookrightarrow A\stackrel{\psi}\to M\to 0$
be a short exact sequence of modules, and let $f$ be an endomorphism
of $A$ such that $\psi = \psi f$. Then $f(A)+B=A$, $\Ker f\subseteq
B$ and $B\cap f(A)=f(B)$.
\end{lem}

 The next proposition shows how the notion of $\B$-superfluous
submodules is related to $\A$-covers.

\begin{prop}\label{P:superf} Let 
\begin{equation}
 0\to B\hookrightarrow A\stackrel{\psi}\to M\to 0 \tag{$\ast$}
\end{equation}
be a special $\A$-precover of the $R$-module $M$. The
following hold:
\begin{enumerate}
\item if {\rm ($*$)} is an $\A$-cover of $M$, then $B\prec A$;
\item if $M$ admits an $\A$-cover and $B\prec A$,
then {\rm ($*$)} is an $\A$-cover.
%\item if $R$ is a Matlis domain and $C$ is reduced,
%then {\rm (*)} is an $\SF$-cover if and only if $C\prec M$.
\end{enumerate}
\end{prop}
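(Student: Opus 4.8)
The plan is to derive both implications from the single minimality axiom defining a cover --- that any $f\in\End A$ with $\psi f=\psi$ is automatically an automorphism --- combined with the Ext-vanishing $\Ext^1_R(A,B')=0$ for $A\in\A$, $B'\in\B$ supplied by the cotorsion pair.

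For part~(1) I assume $(\ast)$ is an $\A$-cover and fix a submodule $H\subseteq A$ with $H+B=A$ and $H\cap B\in\B$; the goal is $H=A$. First I observe that $\psi|_H\colon H\to M$ is surjective, since $\psi(H)=\psi(H+B)=\psi(A)=M$ (using $\psi(B)=0$), and that its kernel is exactly $H\cap B$. The key step is to lift $\psi$ through $\psi|_H$: applying $\Hom_R(A,-)$ to the short exact sequence $0\to H\cap B\to H\xrightarrow{\psi|_H}M\to 0$ produces a connecting map into $\Ext^1_R(A,H\cap B)$, which vanishes because $A\in\A$ and $H\cap B\in\B=\A^{\perp_1}$. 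Hence there is $\tilde\psi\colon A\to H$ with $(\psi|_H)\tilde\psi=\psi$, and composing with the inclusion $H\hookrightarrow A$ gives $f\in\End A$ satisfying $\psi f=\psi$ and $f(A)\subseteq H$. The cover property forces $f$ to be an automorphism, so $A=f(A)\subseteq H$, i.e.\ $H=A$. Here Lemma~\ref{L:endomorph} may be invoked to record the incidental relations $f(A)+B=A$ and $\Ker f\subseteq B$ that make the conclusion transparent.

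For part~(2) let $\pi\colon C\to M$ be an $\A$-cover of $M$. Since $A,C\in\A$ and both $\pi,\psi$ are precovers, I factor $\pi=\psi g$ and $\psi=\pi h$; then $\pi(hg)=\pi$ forces $hg$ to be an automorphism of $C$, so $g$ is a split monomorphism and $A\cong C\oplus A'$ with $A'$ the kernel of the corresponding retraction. A short computation shows $h$ kills $A'$, whence $\psi=\pi h$ vanishes on $A'$ and thus $A'\subseteq B$, while $\psi g=\pi$ identifies $g(C)$ with $C$. Because $\psi$ is surjective and factors through $\pi$, the cover $\pi$ is surjective, so Wakamatsu's lemma --- applicable since the left-hand class $\A$ of a cotorsion pair is closed under extensions --- gives $\Ker\pi\in\A^{\perp_1}=\B$. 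Now I test $\B$-superfluousness on $H=g(C)$: one has $H+B=A$ (as $A'\subseteq B$) and $H\cap B=g(\Ker\pi)\cong\Ker\pi\in\B$, so $B\prec A$ yields $H=A$, i.e.\ $A'=0$. Thus $g\colon C\to A$ is an isomorphism intertwining $\pi$ and $\psi$, and $(\ast)$ is an $\A$-cover.

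I expect the main obstacle to be in part~(2): one must correctly extract the split decomposition $A\cong C\oplus A'$ from the two factorizations, verify that $\psi$ dies on the complementary summand, and --- crucially --- know that the kernel of an \emph{arbitrary} $\A$-cover already lies in $\B$. That last point is precisely Wakamatsu's lemma for the extension-closed class $\A$, and it is what lets the superfluousness hypothesis bite. Part~(1) is comparatively routine once the lifting $\tilde\psi$ is produced from the cotorsion-pair Ext-vanishing.
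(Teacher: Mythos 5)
Your proof is correct. Part~(1) is essentially the paper's own argument: lift $\psi$ through $\psi|_H$ using $\Ext^1_R(A,H\cap B)=0$, compose with the inclusion $H\hookrightarrow A$, and invoke the minimality property of the cover.

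Part~(2) takes a genuinely different route. The paper simply cites Xu's result \cite[Corollary~1.2.8]{Xu} --- when $M$ admits an $\A$\+cover, a precover is a cover if and only if its kernel contains no nonzero direct summand of the domain --- and verifies that criterion from $B\prec A$ by a modular-law computation: if $A=Y\oplus A_1$ with $Y\subseteq B$, then $A_1+B=A$ and $B=Y\oplus(B\cap A_1)$, so $B\cap A_1\in\B$ as a direct summand of $B$, and superfluousness forces $A_1=A$, i.e.\ $Y=0$. You instead inline the comparison with the given cover $\pi\colon C\to M$ (which is, in essence, the proof of Xu's criterion): the two precover factorizations yield the splitting $A=g(C)\oplus A'$ with $A'=\Ker h\subseteq B$, and you test superfluousness on $H=g(C)$. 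The price you pay is the extra input of Wakamatsu's lemma, needed to place $H\cap B\cong\Ker\pi$ in $\B$; the paper's route needs no such input, since the relevant module is a direct summand of $B\in\B$ itself. In fact your appeal to Wakamatsu can be eliminated by the same mechanism: since $A'\subseteq B$, the modular law gives $B=(B\cap g(C))\oplus A'$, so $H\cap B=g(\Ker\pi)$ is a direct summand of $B$ and hence lies in $\B$ with no reference to the structure of $\Ker\pi$. What your version buys is self-containedness (no citation of Xu's corollary) and the explicit conclusion that $\psi$ is isomorphic as a precover to the given cover; what the paper's version buys is brevity and a weaker stock of auxiliary facts.
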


\begin{proof}
 (1) Let $H\subseteq A$ be such that $H+B=A$ and $H\cap B\in\B$.
Consider the diagram:
\[\xymatrix{
0\ar[r]&
{H\cap B}\ar[r] &H\ar[r]^{\psi_{|H}} & M\ar[r]&0\\
&& A\ar@{-->}[u]^{\phi} \ar[ur]_{\psi}}
\]
where $\psi_{|H}$ is the restriction of~$\psi$ to~$H$.
 The diagram can be completed by~$\phi$,
since $\Ext_R^1(A, H\cap B)=0$.
 Consider the inclusion~$\varepsilon$ of $H$ into $A$;
then by the above diagram it is clear that
$\psi=\psi\varepsilon\phi$.
 Since {\rm ($*$)} is an $\A$\+cover of $M$, we conclude that
$\varepsilon\phi$ is an automorphism of $A$, hence $H=A$.

(2) By Xu's result \cite[Corollary 1.2.8]{Xu}, it is enough to show that
if $B\prec A$, then $B$ doesn't contain any nonzero summand of~$A$.
 Assume $A=Y\oplus A_1$ with $Y\subseteq B$; then $B+A_1=A$ and
$B=Y\oplus (B\cap A_1)$.
 The module $B\cap A_1$ is in $\B$, being a summand of $B$;
hence $A_1=A$, since $B\prec A$.
\end{proof}

 We apply the previous results to the cotorsion pair
$(S$\+$\SF, S$\+$\WC)$.

\begin{lem}\label{L:divisible-cover}
 Let $D$ be an $S$-divisible module.  Assume that
\[0\to C\hookrightarrow A\stackrel{\psi}\to D\to 0\]
is an $S$\+$\SF$\+cover of $D$.
 Then $A$ is $S$\+divisible and $D$ is $S$\+$h$-divisible.
\end{lem}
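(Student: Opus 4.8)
The plan is to prove first that $A$ is $S$\+divisible, and then to deduce the $S$\+h-divisibility of both $A$ and $D$ from Lemma~\ref{L:S-strongly}(3). Since the left-hand class $S$\+$\SF$ of the cotorsion pair $(S$\+$\SF,S$\+$\WC)$ is closed under extensions, the cover is a special precover (Wakamatsu's lemma), so its kernel lies in $(S$\+$\SF)^{\perp_1}=S$\+$\WC$; thus $C\in S$\+$\WC=\B$. I will also use the defining minimality property of a cover: any endomorphism $f$ of $A$ with $\psi f=\psi$ is an automorphism.

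Fix $s\in S$. I would construct an endomorphism $f$ of $A$ with $\psi f=\psi$ and $f(A)\subseteq sA$; granting this, $f$ is an automorphism, so $A=f(A)\subseteq sA$ and hence $sA=A$. As $s$ is arbitrary, this proves that $A$ is $S$\+divisible. To build $f$, note that since $D$ is $S$\+divisible, multiplication by $s$ on $D$ is surjective, giving a short exact sequence $0\to D[s]\to D\xrightarrow{s}D\to 0$, where $D[s]=\{d\in D\mid sd=0\}$. The module $D[s]$ is annihilated by $s$, hence $S$\+weakly cotorsion: the element $s$ acts invertibly on $R_S$ and therefore on $\Ext^1_R(R_S,D[s])$, while it acts as zero on this group because $sD[s]=0$, forcing $\Ext^1_R(R_S,D[s])=0$. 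Consequently $D[s]\in\B$, so $\Ext^1_R(A,D[s])=0$ as $A\in S$\+$\SF={}^{\perp_1}\B$. Applying $\Hom_R(A,-)$ to the sequence above, the map $\psi$ lifts to some $\psi'\colon A\to D$ with $s\psi'=\psi$. Since $\Ext^1_R(A,C)=0$ as well, $\psi'$ in turn lifts through $\psi$ to a map $g\colon A\to A$ with $\psi g=\psi'$. Setting $f$ to be $g$ followed by multiplication by $s$, one checks that $\psi f=s\psi g=s\psi'=\psi$ and $f(A)=sg(A)\subseteq sA$, as required.

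Once $A$ is known to be $S$\+divisible, it is both $S$\+divisible and $S$\+strongly flat, so Lemma~\ref{L:S-strongly}(3) shows that $A$ is a projective $R_S$\+module. A projective $R_S$\+module is a direct summand, hence an epimorphic image, of a free module $R_S^{(\alpha)}$, so $A$ is $S$\+h-divisible. Finally, $D=\psi(A)$ is an epimorphic image of $A$, hence an epimorphic image of $R_S^{(\alpha)}$, which is precisely the statement that $D$ is $S$\+h-divisible.

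The main obstacle is the construction of the endomorphism $f$. The naive attempt to lift $\psi$ directly through the restriction of $\psi$ to $sA$ runs into the problem of showing that $sA\cap C\in\B$, which is not apparent (one only readily controls the quotient $A/sA\cong C/(sA\cap C)$, which is projective over $R/sR$ by Lemma~\ref{L:S-strongly}(2)). The two-step factorization circumvents this by splitting the lift into a lift through multiplication by $s$ on $D$, controlled by $D[s]\in\B$, and a lift through $\psi$, controlled by $C\in\B$, both of whose obstruction groups vanish because $A\in S$\+$\SF$.
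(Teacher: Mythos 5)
Your proof is correct, but it takes a genuinely different route from the paper's for the key step that $A$ is $S$\+divisible. The paper applies its superfluous-submodule machinery, Proposition~\ref{P:superf}~(1), to the submodule $H=sA$: since $D$ is $S$\+divisible one has $sA+C=A$, and from the exact sequence $0\to C\cap sA\to C\to A/sA\to 0$, using $\Hom_R(R_S,A/sA)=0$ (any homomorphic image of $R_S$ is $s$\+divisible, while $A/sA$ is annihilated by~$s$) and $\Ext^1_R(R_S,C)=0$, it concludes that $C\cap sA$ is $S$\+weakly cotorsion, whence $sA=A$ by $\B$\+superfluousness. So the obstacle you name in your closing paragraph --- showing $sA\cap C\in\B$ --- is in fact overcome in the paper by exactly this short argument: one controls the quotient $C/(C\cap sA)\cong A/sA$ not through projectivity over $R/sR$ but through the vanishing of $\Hom_R(R_S,{-})$ on $s$\+annihilated modules. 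Your two-step lifting (first through multiplication by~$s$ on $D$, using $D[s]\in\B$, then through~$\psi$, using $C\in\B$) instead constructs by hand an endomorphism $f$ of $A$ with $\psi f=\psi$ and $f(A)\subseteq sA$ and invokes the cover's minimality directly; in effect you re-prove the relevant instance of Proposition~\ref{P:superf}~(1), at the cost of the (correct) extra observation that any module annihilated by an element of $S$ is $S$\+weakly cotorsion. Both arguments finish identically via Lemma~\ref{L:S-strongly}~(3). The paper's route is more economical within the paper, since Proposition~\ref{P:superf} is reused elsewhere (e.g., in Lemma~\ref{L:R/sR-modules}); yours is self-contained, bypasses the superfluous-submodule formalism, and makes explicit how the minimality of the cover forces divisibility.
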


\begin{proof}
 Let $s\in S$. We have $sA+C=A$, since $D$ is $S$\+divisible.
 From the exact sequence $0\to C\cap sA\to C\to A/sA\to 0$ we get
$$
 0=\Hom_R(R_S, A/sA)\la \Ext^1_R(R_S, C\cap sA)\la
 \Ext^1_R(R_S, C)=0,
$$
hence $C\cap sA$ is $S$\+weakly cotorsion.
 By Proposition~\ref{P:superf}~(1), $sA=A$, thus $A$ is $S$-divisible.
 Now by Lemma~\ref{L:S-strongly} $A$ is a projective $R_S$-module,
hence $D$ is $S$\+$h$-divisible.
\end{proof}

\begin{lem}\label{L:R_S-modules}
 Assume that every $R_S$-module admits an  $S$\+$\SF$\+cover.
 Then $R_S$ is a perfect ring.
\end{lem}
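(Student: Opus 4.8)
The plan is to show that every $R_S$\+module admits a projective cover in the category of $R_S$\+modules; by Bass's theorem (see~\cite{GT}) this is equivalent to $R_S$ being a perfect ring. So let $Y$ be an arbitrary $R_S$\+module. As an $R$\+module, $Y$ is $S$\+divisible, and by hypothesis it has an $S$\+$\SF$\+cover, which I write as a short exact sequence $0\to C\to A\overset{\psi}\to Y\to 0$ with $A\in S$\+$\SF$ and $C\in S$\+$\WC$. By Lemma~\ref{L:divisible-cover} the module $A$ is $S$\+divisible, so by Lemma~\ref{L:S-strongly}~(3) it is a projective $R_S$\+module. Since the localization map $R\to R_S$ is an epimorphism of rings, restriction of scalars is fully faithful on $R_S$\+modules; in particular $\psi$ is $R_S$\+linear, $C$ is an $R_S$\+submodule of $A$, and $\Hom_R(R_S,M)\cong M$ for every $R_S$\+module~$M$. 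The goal is to prove that $0\to C\to A\to Y\to 0$ is a projective cover of $Y$ over $R_S$.

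Since $A$ is already projective over $R_S$, it remains to show that $C$ is a superfluous $R_S$\+submodule of $A$, i.e.\ that $H+C=A$ forces $H=A$ for every $R_S$\+submodule $H\subseteq A$; I deduce this from the restricted superfluousness $C\prec A$ provided by Proposition~\ref{P:superf}~(1). Given such an $H$, the quotient $A/H\cong C/(H\cap C)$ is an $R_S$\+module, hence $S$\+weakly cotorsion because $\Ext^1_R(R_S,Y')=0$ for every $R_S$\+module $Y'$ (as used in the proof of Lemma~\ref{L:annihil}). Applying $\Hom_R(R_S,-)$ to $0\to H\cap C\to C\to A/H\to 0$ yields the exact sequence $\Hom_R(R_S,C)\to\Hom_R(R_S,A/H)\to\Ext^1_R(R_S,H\cap C)\to\Ext^1_R(R_S,C)=0$. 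Under the identifications $\Hom_R(R_S,C)\cong C$ and $\Hom_R(R_S,A/H)\cong A/H$ the first map becomes the natural surjection $C\to C/(H\cap C)$; hence $\Ext^1_R(R_S,H\cap C)=0$, that is $H\cap C\in S$\+$\WC$. Now $C\prec A$ applies and gives $H=A$.

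Having shown that $C$ is superfluous in the projective $R_S$\+module $A$, the sequence $0\to C\to A\to Y\to 0$ is a projective cover of $Y$ in the category of $R_S$\+modules. As $Y$ was arbitrary, every $R_S$\+module has a projective cover, and therefore $R_S$ is perfect. The main obstacle is the passage from the restricted notion $C\prec A$, which only controls submodules $H$ with $H\cap C$ weakly cotorsion, to genuine superfluousness over $R_S$; this is exactly what the computation of $\Hom_R(R_S,-)$ and $\Ext^1_R(R_S,-)$ on the $R_S$\+modules $C$ and $A/H$ resolves, crucially exploiting that $R\to R_S$ is a ring epimorphism so that these Hom groups collapse to the modules themselves.
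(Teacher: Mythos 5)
Your proof is correct, and its first three quarters coincide with the paper's own argument: both take the $S$-$\SF$-cover $0\to C\to A\stackrel{\psi}\to Y\to 0$ of an $R_S$-module $Y$, use Lemma~\ref{L:divisible-cover} together with Lemma~\ref{L:S-strongly}~(3) to conclude that $A$ is a projective $R_S$-module, and use the fact that $R\to R_S$ is a flat ring epimorphism to regard the sequence as a sequence of $R_S$-modules. The divergence is in the endgame. The paper finishes in one line: any $R_S$-endomorphism $f$ of $A$ with $\psi f=\psi$ is in particular an $R$-endomorphism with that property, hence an automorphism because $\psi$ is a cover over $R$; so $\psi$ is a projective cover of $Y$ over $R_S$ in the Enochs sense, and $R_S$ is perfect. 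You instead produce a classical projective cover, proving that $C$ is a superfluous $R_S$-submodule of $A$ by combining the restricted superfluousness $C\prec A$ of Proposition~\ref{P:superf}~(1) with a Hom/Ext computation certifying that $H\cap C$ is $S$-weakly cotorsion whenever $H$ is an $R_S$-submodule with $H+C=A$. Both routes are valid: the paper's is shorter but leans implicitly on the standard equivalence between Enochs projective covers and classical (superfluous-kernel) projective covers of modules, whereas yours establishes the classical cover directly and makes fuller use of the $\mathcal{B}$-superfluousness machinery that the paper set up. One simplification is available to you: since $H$ and $C$ are both $R_S$-submodules of $A$, the intersection $H\cap C$ is itself an $R_S$-module, hence $S$-weakly cotorsion immediately by the fact you already quoted (that $\Ext^1_R(R_S,{-})$ vanishes on $R_S$-modules); the exact-sequence argument identifying $\Hom_R(R_S,C)\to\Hom_R(R_S,A/H)$ with the natural surjection, while correct, is not needed.
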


\begin{proof}
 Let $M$ be an $R_S$-module and let
$(a)\quad 0\to C\hookrightarrow A\stackrel{\psi}\to M\to 0$
be an $S$\+$\SF$\+cover of~$M$.
 $M$~is $S$-divisible, hence, by Lemmas~\ref{L:divisible-cover}
and~\ref{L:S-strongly}, $A$ is a projective $R_S$-module.
 Tensoring by $R_S$ we conclude that $(a)$ is an exact sequence
of $R_S$\+modules.
 Since $(a)$ was a cover and $R\to R_S$ is a flat ring epimorphism,
we have that every $R_S$\+endomorphism $f$ of $A$ such that
$\psi\circ f=\psi$ is an automorphism of $A$.
 Thus, every $R_S$\+module admits a projective cover, which
means that $R_S$ is a perfect ring.
\end{proof}

\begin{lem}\label{L:R/sR-modules}
 Let $s\in S$ and assume that every $R/sR$-module admits
an $S$\+$\SF$\+cover.
 Then $R/sR$ is a perfect ring.
\end{lem}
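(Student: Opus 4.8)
The plan is to show that every $R/sR$\+module admits a projective cover and then to invoke Bass's theorem, in the form of Lemma~\ref{L:perfect} already used in Lemma~\ref{L:R_S-modules}, to conclude that $R/sR$ is perfect. Fix an $R/sR$\+module $M$ and, using the hypothesis, choose an $S$\+$\SF$\+cover
\[0\to C\hookrightarrow A\overset{\psi}\to M\to 0,\]
which, being a special $S$\+$\SF$\+precover, has $C\in S$\+$\WC$. Since $sM=0$ we get $\psi(sA)=sM=0$, so $sA\subseteq C$; moreover $sC\subseteq sA$, whence $C/sA$ is an $R/sR$\+module. Reducing modulo $s$ yields an exact sequence of $R/sR$\+modules
\[0\to C/sA\hookrightarrow A/sA\overset{\bar\psi}\to M\to 0,\]
in which $A/sA$ is a projective $R/sR$\+module by Lemma~\ref{L:S-strongly}(2). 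I will show that this is the desired projective cover.

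The crux is to transfer the covering property from $A$ to $A/sA$, for which I first record that $sA$ is $S$\+weakly cotorsion. Indeed, $C/sA$ is an $R/sR$\+module, so by Lemma~\ref{S-contra}(1) applied to $R\to R/sR$ (whose target localizes to $0$ at the image of $S$, as that image contains the class of $s$) it is an $S$\+contramodule; in particular $\Hom_R(R_S,C/sA)=0=\Ext^1_R(R_S,C/sA)$. Feeding the sequence $0\to sA\to C\to C/sA\to 0$ into $\Hom_R(R_S,-)$ and using $\Ext^1_R(R_S,C)=0$ then squeezes $\Ext^1_R(R_S,sA)=0$, i.e.\ $sA\in S$\+$\WC$. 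Since $A\in S$\+$\SF$, this gives $\Ext^1_R(A,sA)=0$, so applying $\Hom_R(A,-)$ to $0\to sA\to A\to A/sA\to 0$ shows that the reduction map $\End_R(A)\to\Hom_R(A,A/sA)=\End_{R/sR}(A/sA)$ is surjective, the identification being the Hom\+tensor adjunction for the $R/sR$\+module $A/sA$.

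With this surjectivity in hand the cover property descends exactly as in Lemma~\ref{L:R_S-modules}: given $\bar f\in\End_{R/sR}(A/sA)$ with $\bar\psi\bar f=\bar\psi$, lift it to $f\in\End_R(A)$ with $qf=\bar f q$, where $q\colon A\to A/sA$; then $\psi f=\bar\psi qf=\bar\psi\bar f q=\bar\psi q=\psi$, so $f$ is an automorphism because $\psi$ is an $S$\+$\SF$\+cover, and hence $\bar f$ is an automorphism as well. Thus $\bar\psi\colon A/sA\to M$ is a projective cover, and as $M$ was arbitrary, $R/sR$ is perfect. The one genuinely nonformal point, and the step I expect to be the main obstacle, is the surjectivity of $\End_R(A)\to\End_{R/sR}(A/sA)$: it would fail for a general flat $A$, and it is rescued precisely by the special\+precover input $C\in S$\+$\WC$ together with the fact that $C/sA$, being an $R/sR$\+module, is an $S$\+contramodule, which is exactly what forces $sA\in S$\+$\WC$.
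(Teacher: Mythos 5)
Your proof is correct, but it reaches the projective cover by a different mechanism than the paper. Both arguments begin identically: reduce modulo~$s$ to the exact sequence $0\to C/sA\to A/sA\to M\to 0$ with $A/sA$ projective over $R/sR$ by Lemma~\ref{L:S-strongly}~(2). The paper then verifies the classical superfluous-kernel condition: for a submodule $H\supseteq sA$ of $A$ with $C/sA+H/sA=A/sA$, it shows $C\cap H$ is $S$\+weakly cotorsion (from $0\to C\cap H\to C\to A/H\to 0$, the quotient $A/H$ being killed by~$s$) and applies Proposition~\ref{P:superf}~(1) to conclude $H=A$. You instead descend the endomorphism (Enochs) property of the cover from $\psi$ to $\bar\psi$ by lifting endomorphisms of $A/sA$ to endomorphisms of $A$; the lifting needs $\Ext^1_R(A,sA)=0$, i.e.\ $sA\in S$\+$\WC$, which you establish by the very same trick the paper uses for $C\cap H$ (the quotient $C/sA$ is killed by~$s$, hence receives no nonzero maps from $R_S$, while $C$ is $S$\+weakly cotorsion). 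So the same weak-cotorsion computation is spent in different places: the paper feeds it into its $\B$\+superfluous machinery, whereas your endomorphism-lifting argument runs parallel to the paper's own proof of Lemma~\ref{L:R_S-modules}, where no lifting was necessary because $A$ was already an $R_S$\+module there. Your route treats the two lemmas uniformly and avoids Proposition~\ref{P:superf} altogether; the paper's route lands directly on the superfluous-kernel definition of a projective cover, while you (like the paper in Lemma~\ref{L:R_S-modules}) rely on the standard fact that an epimorphism from a projective module all of whose compatible endomorphisms are automorphisms is a projective cover in the classical sense. Both are sound.
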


\begin{proof}
 Let $M$ be an $R/sR$-module and let
$(a)\quad 0\to C\hookrightarrow A\stackrel{\psi}\to M\to 0$
be an $S$\+$\SF$\+cover of~$M$.
 We may assume, w.l.o.g., that $M$ is not a projective $R/sR$\+module
(otherwise $R/sR$ is semisimple, hence perfect).

 By Lemma~\ref{L:S-strongly}~(2), $A/sA$ is a projective $R/sR$\+module.
 We have $sA\subseteq C$ and $sA\varsubsetneq C$ by the assumption
on~$M$.
 We show that the exact sequence
\begin{equation}
 0\la C/sA\la A/sA\la M\la 0 \tag{$b$}
\end{equation}
is a projective $R/sR$\+cover of $M$, so that we conclude that
$R/sR$ is a perfect ring.
 Let $H$ be a submodule of $A$ containing $sA$ and such that
$C/sA+H/sA=A/sA$.
 Then $C+H=A$ and the exact sequence $0\to C\cap H\to C\to A/H\to 0$
shows that $C\cap H$ is an $S$\+weakly cotorsion module,
since $C$ is $S$\+weakly cotorsion and $sA\subseteq H$.
 By Proposition~\ref{P:superf}, $H=A$, hence $C/sA$ is a superfluous
submodule of $A/sA$.
\end{proof}

\section{$S$-$\mathrm h$-local rings}
\label{S-h-local-secn}

% \CHG{We keep the same notation, that is $S$ will be a multiplicative subset of
%a commutative ring $R$, \,$\phi\colon R\to R_S$ the localization map,
%and $I=\Ker\phi$.}

 In this section we define $S$\+h-local rings and we show that they are characterized by the fact  every $S$\+torsion module decomposes as a direct sum of its localizations at the maximal ideals of $R$. Moreover, with the extra assumption of the boundness of the $S$\+torsion of $R$, we prove a dual characterization, namely $R$ is $S$\+h-local if and only if every $S$\+contramodule is a direct product  of its colocalizations at the maximal ideals of~$R$.
 
  We denote by $\Max R$ the set of all maximal ideals of the ring~$R$.
 \begin{defn}
 We say that $R$ is \emph{$S$\+h-local} if every element $s\in S$ is
contained only in finitely many maximal ideals of $R$ and every prime
ideal of $R$ intersecting $S$ is contained in only one maximal ideal.
\end{defn}

\begin{lem} \label{two-maximal-ideals}
 Let $S$ be a multiplicative subset of a commutative ring $R$ such that
every prime ideal of $R$ intersecting $S$ is contained in only one
maximal ideal.
 Let\/ $\m\ne\n$ be two maximal ideals of~$R$.
 Then the images of all elements of $S$ are invertible in the ring
$R_\m\otimes_R R_\n$.
\end{lem}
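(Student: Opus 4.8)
The plan is to realize the ring $R_\m\otimes_R R_\n$ as a localization of $R$ and then translate the invertibility of the image of $s\in S$ into a statement about prime ideals. First I would observe that, since $R_\m=(R\setminus\m)^{-1}R$ and $R_\n=(R\setminus\n)^{-1}R$, the tensor product $R_\m\otimes_R R_\n$ is naturally isomorphic to the localization $U^{-1}R$, where $U=(R\setminus\m)(R\setminus\n)$ denotes the multiplicative subset consisting of all products $uv$ with $u\notin\m$ and $v\notin\n$.

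Next I would describe which primes survive in $U^{-1}R$. A prime ideal $\p$ of $R$ gives a prime of $U^{-1}R$ precisely when $\p\cap U=\varnothing$. Since $U$ contains both $R\setminus\m$ and $R\setminus\n$ (as $1$ lies in each factor) and since $\p$ is prime, one checks that $\p\cap U=\varnothing$ holds if and only if $\p\subseteq\m$ and $\p\subseteq\n$, that is, $\p\subseteq\m\cap\n$. I would then recall the standard fact that an element $r\in R$ has invertible image in $U^{-1}R$ if and only if $r$ lies in no prime ideal of $R$ disjoint from $U$, equivalently, in no prime ideal contained in $\m\cap\n$.

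With these reductions in place, the conclusion follows from the hypothesis by a short contradiction argument. Suppose the image of some $s\in S$ were \emph{not} invertible in $R_\m\otimes_R R_\n$. By the previous step there would exist a prime ideal $\p\subseteq\m\cap\n$ with $s\in\p$. But then $\p$ is a prime ideal of $R$ meeting $S$ (since $s\in\p\cap S$) which is contained in the two distinct maximal ideals $\m$ and $\n$, contradicting the assumption that every prime ideal of $R$ intersecting $S$ is contained in only one maximal ideal. Hence the image of each $s\in S$ is invertible in $R_\m\otimes_R R_\n$, as claimed.

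The only points requiring a little care are the identification of the tensor product with the localization $U^{-1}R$ and the characterization of units in a localization; both are standard facts, so I expect no genuine obstacle. The essential content is simply the observation that a prime surviving in $R_\m\otimes_R R_\n$ is exactly a prime contained in both $\m$ and $\n$, which is incompatible with the hypothesis as soon as such a prime meets~$S$.
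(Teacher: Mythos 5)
Your proposal is correct and follows essentially the same route as the paper: both identify the prime ideals of $R_\m\otimes_R R_\n$ with the primes of $R$ contained in $\m\cap\n$, note that by hypothesis no such prime can meet $S$, and conclude that each $s\in S$ avoids every prime of $R_\m\otimes_R R_\n$ and is therefore invertible. Your explicit identification $R_\m\otimes_R R_\n\cong U^{-1}R$ just makes precise what the paper uses implicitly, so there is no substantive difference.
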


\begin{proof}
 Let $s\in S$ be an element.
 Assume that $\q$~is a prime ideal of $R_\m\otimes R_\n$.
 Then $\q$~is the localization of a prime ideal $\p$ contained in
$\m\cap\n$.
 By assumption, $\p$~does not intersect $S$, so $s\notin\p$.
 Thus no prime ideal of $R_\m\otimes R_\n$ contains~$s$, which implies
that $s$~is invertible in $R_\m\otimes R_\n$.
\end{proof}

\begin{prop}\label{P:h-local}
 Let $S$ be a multiplicative subset of a commutative ring~$R$.
 Then the following are equivalent:
% for a multiplicative subset of a commutative ring~$R$:
\begin {enumerate}
\item $R$ is $S$\+h-local.
\item Every $S$\+torsion $R$\+module $M$ is isomorphic to the direct sum\/
        $\bigoplus\limits_{\m\in \Max R}M_{\m}$.
\item Every $S$\+torsion $R$\+module $M$ is isomorphic to the direct sum\/
        $\bigoplus\limits_{\m\in \Max R;\,\m\cap S\neq\varnothing}M_{\m}$.
%\item Assume that $R/sR$ is a perfect ring for every $s\in S$.
% Then $R$ is $S$\+h-local and $R_S$ has projective dimension at most one.
\end{enumerate}
\end{prop}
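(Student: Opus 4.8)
The plan is to prove $(2)\Leftrightarrow(3)$ cheaply and then establish the two implications $(1)\Rightarrow(3)$ and $(3)\Rightarrow(1)$. The common device is the canonical $R$\+module homomorphism $\theta\colon M\to\prod_{\m\in\Max R}M_\m$ assembled from the localization maps. First I would record two observations valid for every $S$\+torsion module $M$. If $\m\cap S=\varnothing$, then $R\to R_\m$ factors through $R_S$, so $M_\m\cong(M\otimes_R R_S)\otimes_{R_S}R_\m=0$; hence the two direct sums in $(2)$ and $(3)$ literally coincide and $(2)\Leftrightarrow(3)$ is immediate. Next, $\theta$ is injective for any module (if $x/1=0$ in every $M_\m$ then $\Ann_R(x)$ lies in no maximal ideal, so $x=0$); and when $R$ is $S$\+h-local the image of $\theta$ lands in $\bigoplus_\m M_\m$, since for $x\in M$ one has $sx=0$ for some $s\in S$, whence $x/1=0$ in $M_\m$ whenever $s\notin\m$, and $s$ lies in only finitely many maximal ideals. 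Thus for $(1)\Rightarrow(3)$ the only real issue is surjectivity of $\theta$ onto the direct sum.

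For surjectivity I would reduce everything to one structural statement: for each $s\in S$ the ring $R/sR$ is the finite product of its localizations at its maximal ideals. Let $\m_1,\dots,\m_k$ be the finitely many maximal ideals of $R$ containing $s$, so that $\overline{\m}_1,\dots,\overline{\m}_k$ are the maximal ideals of $A:=R/sR$. The key input is Lemma~\ref{two-maximal-ideals}: for $i\ne j$ the element $s$ is invertible in $R_{\m_i}\otimes_R R_{\m_j}$, and since $A_{\overline{\m}_i}\otimes_A A_{\overline{\m}_j}\cong(R_{\m_i}\otimes_R R_{\m_j})/s(R_{\m_i}\otimes_R R_{\m_j})$, this tensor product vanishes. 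I then claim the natural map $\lambda\colon A\to\prod_{i=1}^k A_{\overline{\m}_i}$ is an isomorphism: it suffices to check this after localizing at each $\overline{\m}_l$, and since a finite product commutes with localization and $A_{\overline{\m}_i}\otimes_A A_{\overline{\m}_l}$ equals $0$ for $i\ne l$ and $A_{\overline{\m}_l}$ for $i=l$, the localized map $\lambda_{\overline{\m}_l}$ is the identity of $A_{\overline{\m}_l}$. With the resulting orthogonal idempotents $e_1,\dots,e_k\in A$ in hand, any $x\in M$ with $sx=0$ is an $A$\+module element and decomposes as $x=\sum_i e_ix$; localizing shows $e_ix/1=x/1$ in $M_{\m_i}$ while $e_ix/1=0$ in $M_\n$ for every maximal $\n\ne\m_i$, so each $e_ix$ is supported at $\m_i$ alone. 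Hence every $x$ is a finite sum of elements each supported at a single maximal ideal. Writing $M^{(\m)}=\{y\in M\mid y/1=0 \text{ in } M_\n \text{ for all } \n\ne\m\}$, this says $M=\sum_\m M^{(\m)}$; injectivity of $\theta$ makes the sum direct, and the same localization computation shows $M^{(\m)}\mapr{\sim}M_\m$. Therefore $\theta$ is the asserted isomorphism, proving $(1)\Rightarrow(3)$ and, with $(2)\Leftrightarrow(3)$, also $(1)\Rightarrow(2)$.

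For the converse $(3)\Rightarrow(1)$ I would test the decomposition on two specific $S$\+torsion modules. Taking $M=R/sR$ for $s\in S$: it is cyclic, so in any direct sum decomposition the image of $1$ has finite support and generates, forcing all but finitely many summands to vanish; since $(R/sR)_\m\ne0$ precisely when $s\in\m$, only finitely many maximal ideals contain $s$. Taking $M=R/\p$ for a prime $\p$ meeting $S$ in an element $s$ (so $s\cdot(R/\p)=0$ and $R/\p$ is $S$\+torsion): if $\p$ were contained in two distinct maximal ideals $\m\ne\n$, then $(R/\p)_\m$ and $(R/\p)_\n$ would both be nonzero, exhibiting $R/\p$ as a direct sum of at least two nonzero $R$\+submodules; but $R/\p$ is a domain, hence indecomposable as a module over itself (two nonzero ideals meet in a nonzero product), a contradiction. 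These two facts are exactly the two defining conditions of $S$\+h-locality.

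The main obstacle is the middle paragraph: proving that $R/sR$ splits as the finite product of its localizations at its maximal ideals. Everything else (injectivity of $\theta$, finiteness of support, the converse via the cyclic and domain test modules) is formal; it is precisely here that the two hypotheses of $S$\+h-locality are both used, with the uniqueness of the maximal ideal above each prime meeting $S$ entering only through the vanishing $A_{\overline{\m}_i}\otimes_A A_{\overline{\m}_j}=0$ supplied by Lemma~\ref{two-maximal-ideals}.
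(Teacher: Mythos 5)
Your proposal is correct and follows essentially the same route as the paper: both hinge on decomposing $R/sR$ into the finite product of its localizations at maximal ideals via Lemma~\ref{two-maximal-ideals} (you make the resulting idempotents explicit where the paper leaves the final surjectivity step implicit), and both prove the converse by testing the decomposition on $R/sR$ and on $R/\p$ for a prime $\p$ meeting $S$ (you invoke indecomposability of the domain $R/\p$ where the paper localizes at $\p$ to contradict the field $R_\p/\p R_\p$ containing two copies of itself). These are cosmetic variations of the paper's argument, not a different proof.
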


\begin{proof}
(1) $\Rightarrow$ (2) We first show that for every $s\in S$,
$R/sR$ is isomorphic to the finite direct sum
$\bigoplus\limits_{s\in\m\in \Max R}(R/sR)_{\m}$. 
 By assumption $s$~is contained only in finitely many maximal ideals
$\m_i$, $i=1, \dots, n$, hence there is a monomorphism
$\mu\colon R/sR\to \bigoplus\limits_{i=1,\dots,n}(R/sR)_{\m_i}$.
 To prove that $\mu$~is surjective we show that the localizations of
$\mu(R/sR)$ and of $\bigoplus\limits_{i=1,\dots,n}(R/sR)_{\m_i}$
at every maximal ideal~$\n$ of $R$ are equal.
 This is clear if $\n$~is a maximal ideal different from $\m_i$,
for every $i=1,\dots,n$.
 It remains to observe that $(R/sR)_{\m_i}\otimes R_{\m_j}=0$ for
every $i\neq j$ in $1,\dots,n$, since $s$~is a unit in
$R_{\m_i}\otimes R_{\m_j}$ by Lemma~\ref{two-maximal-ideals}.

 Let now $M$ be an $S$\+torsion module and let $\psi\colon M\to
\prod\limits_{\m\in \Max R}M_{\m}$.
 Then $\psi$~is a monomorphism and the image of~$\psi$ is contained
in the direct sum $\bigoplus\limits_{\m\in \Max R}M_{\m}$.
 Indeed, let $x\in M$ and $s\in S$ be such that $xs=0$.
 By assumption $s$~is contained only in finitely many maximal ideals,
hence $x$~is zero in $M_{\m}$ for every maximal ideal~$\m$
not containing~$s$.
 By the previous argument $R/sR$ is a direct sum of finitely many local
rings for every $s\in S$, hence it follows that the map
$M\to\bigoplus\limits_{\m\in \Max R}M_{\m}$ is surjective.
 
(2) $\Leftrightarrow$ (3) is obvious, since for every $S$\+torsion
module $M$, $M_{\m}=0$ for every maximal ideal~$\m$ not
intersecting~$S$.

(2) $\Rightarrow$ (1) Condition (2) applied to $R/sR$ easily implies that
every element $s\in S$ is contained only in finitely many maximal ideals.

 Let $\p$ be a prime ideal of $R$ containing an element $s\in S$ and
assume that $\p$~is contained in two different maximal ideals $\m$
and~$\n$.
 Then $R/\p$ is $S$\+torsion and $R_\p=R_\p\otimes R_\m=R_\p\otimes R_\n$.
 By~(2), we obtain that $R_\p/\p R_\p$ contains the direct sum of two copies of
$R_\p/\p R_\p$, a contradiction.
\end{proof}

 The direct sum decomposition in Proposition~\ref{P:h-local} is unique and
functorial in a strong sense described by the next lemma.

\begin{lem} \label{sum-decomp-unique}
 Let $R$ be an $S$\+h-local ring.
 Let $M(\m)$ and $N(\m)$ be two collections of $R_\m$\+modules
indexed by\/ $\m\in\Max R$.
 Assume that the modules $M(\m)$ are $S$\+torsion.
 Then every $R$\+module morphism\/ $\bigoplus_\m M(\m)\to
\bigoplus_\m N(\m)$ is a direct sum of $R_\m$\+module morphisms\/
$M(\m)\to N(\m)$.
\end{lem}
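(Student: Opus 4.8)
The plan is to reduce the statement to two facts: that for distinct maximal ideals $\m\neq\n$ there are \emph{no} nonzero $R$\+module homomorphisms $M(\m)\to N(\n)$, so that every $f$ is automatically ``diagonal''; and that each diagonal component is automatically $R_\m$\+linear. Granting these, $f$ decomposes as the required direct sum of $R_\m$\+module maps.

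First I would fix $\m\neq\n$ in $\Max R$ and prove $\Hom_R(M(\m),N(\n))=0$. Since $N(\n)$ is an $R_\n$\+module, the localization adjunction gives $\Hom_R(M(\m),N(\n))\cong\Hom_{R_\n}(M(\m)\otimes_R R_\n,\,N(\n))$, so it suffices to show $M(\m)\otimes_R R_\n=0$. As $M(\m)$ is an $R_\m$\+module we have $M(\m)\cong M(\m)\otimes_R R_\m$, whence $M(\m)\otimes_R R_\n\cong M(\m)\otimes_R(R_\m\otimes_R R_\n)$. By Lemma~\ref{two-maximal-ideals} every element of $S$ acts invertibly on the ring $R_\m\otimes_R R_\n$, so this ring is an $R_S$\+module; base change then yields $M(\m)\otimes_R(R_\m\otimes_R R_\n)\cong(M(\m)\otimes_R R_S)\otimes_{R_S}(R_\m\otimes_R R_\n)$, which vanishes because $M(\m)$ is $S$\+torsion, i.e.\ $M(\m)\otimes_R R_S=0$.

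Given this vanishing, the diagonal structure is immediate: for $x\in M(\m)$ the $\n$\+component of $f(x)$ is the image of $x$ under the composite $M(\m)\hookrightarrow\bigoplus_{\m'} M(\m')\xrightarrow{f}\bigoplus_{\n'} N(\n')\twoheadrightarrow N(\n)$, an element of $\Hom_R(M(\m),N(\n))$, hence zero for $\n\neq\m$. Thus $f(M(\m))\subseteq N(\m)$, and since every element of $\bigoplus_\m M(\m)$ is a finite sum of elements of the summands and $f$ is additive, $f=\bigoplus_\m f_\m$, where $f_\m\colon M(\m)\to N(\m)$ is the restriction of $f$. Finally, each $f_\m$ is an $R$\+linear map between two $R_\m$\+modules; because $R\to R_\m$ is a ring epimorphism, restriction of scalars $\Modl{R_\m}\to\Modl R$ is fully faithful, so $f_\m$ is automatically $R_\m$\+linear. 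This produces the desired decomposition into $R_\m$\+module morphisms.

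The main obstacle is the vanishing $\Hom_R(M(\m),N(\n))=0$ for $\m\neq\n$. A naive element-wise argument — given $x$ with $sx=0$ for some $s\in S$, conclude that its image is killed by $s$ — breaks down precisely when $s$ lies in $\m\cap\n$, a situation that $S$\+h-locality permits. Routing through $M(\m)\otimes_R R_\n$ and Lemma~\ref{two-maximal-ideals} is exactly what handles such $s$: although a single $s$ need not be invertible in $R_\n$, \emph{all} of $S$ becomes invertible after the double localization $R_\m\otimes_R R_\n$, and this is what forces the tensor product to vanish.
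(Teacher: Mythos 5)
Your proof is correct and follows essentially the same route as the paper's: both reduce to showing $\Hom_R(M(\m),N(\n))=0$ for $\m\ne\n$ by observing that $M(\m)\otimes_R R_\n$ is an $S$\+torsion module over $R_\m\otimes_R R_\n$, where all elements of $S$ are invertible by Lemma~\ref{two-maximal-ideals}, hence it vanishes. Your additional spelling-out of the adjunction step and of $R_\m$\+linearity of the components (via the fact that $R\to R_\m$ is a ring epimorphism) is left implicit in the paper but is exactly the intended justification.
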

 
\begin{proof}
 It suffices to check that $\Hom_R(M(\m),N(\n))=0$ for any two maximal
ideals $\m\ne\n$.
 Indeed, $M(\m)\otimes R_\n$ is an $(R_\m\otimes R_\n)$-module,
and by Lemma~\ref{two-maximal-ideals} all elements of $S$ are invertible
in $R_\m\otimes R_\n$.
 Since $M(\m)\otimes R_\n$ is an $S$\+torsion module, it follows
that $M(\m)\otimes R_\n=0$.
\end{proof}
 
 The above results yield the following equivalence of categories.

\begin{cor} 
For any $S$\+h-local ring $R$, the direct sum
decomposition of Proposition~\ref{P:h-local} establishes an equivalence
between the abelian category of $S$\+torsion $R$\+modules and
the Cartesian product of the abelian categories of $S$\+torsion
$R_\m$\+modules over all\/ $\m\in\Max R$, \,$\m\cap S\ne\varnothing$.
\end{cor}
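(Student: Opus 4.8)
The plan is to produce an explicit functor between the two categories and verify that it is an equivalence by checking it is additive, fully faithful, and essentially surjective. Write $\clS$ for the category of $S$\+torsion $R$\+modules and, for each $\m\in\Max R$ with $\m\cap S\ne\varnothing$, write $\clS_\m$ for the category of $S$\+torsion $R_\m$\+modules. I would define a functor $F\colon\prod_\m\clS_\m\to\clS$ by sending a family $(N(\m))_\m$ to the direct sum $\bigoplus_\m N(\m)$, viewed as an $R$\+module through the localization maps $R\to R_\m$, and a family of $R_\m$\+linear maps $(f_\m)_\m$ to $\bigoplus_\m f_\m$. First I would check that $F$ is well defined: a direct sum of $S$\+torsion modules is again $S$\+torsion, so $\bigoplus_\m N(\m)$ indeed lies in $\clS$.

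For essential surjectivity I would start from an arbitrary $M\in\clS$. Each localization $M_\m=M\otimes_R R_\m$ is an $S$\+torsion $R_\m$\+module (if $sx=0$ in $M$ with $s\in S$, then $s$ kills $x/t$ in $M_\m$), and $M_\m=0$ whenever $\m\cap S=\varnothing$, which is exactly why the index set may be restricted to the maximal ideals meeting $S$. Proposition~\ref{P:h-local}, in the form (1)$\Rightarrow$(3), then supplies an isomorphism $M\cong\bigoplus_\m M_\m=F\big((M_\m)_\m\big)$, so $M$ lies in the essential image of $F$. Faithfulness is immediate, since $\bigoplus_\m f_\m=0$ forces each $f_\m=0$ upon restricting to the $\m$\+th summand. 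The crucial point is fullness, and here I would invoke Lemma~\ref{sum-decomp-unique}: because every $M(\m)$ is $S$\+torsion, each $R$\+module morphism $\bigoplus_\m M(\m)\to\bigoplus_\m N(\m)$ decomposes as a direct sum of $R_\m$\+module morphisms $M(\m)\to N(\m)$, so it equals $F$ applied to that family.

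An additive functor that is fully faithful and essentially surjective is an equivalence, which completes the argument; the quasi\+inverse is of course $M\mapsto(M_\m)_\m$. I expect no genuine obstacle here, as the two inputs needed are handed to us directly: the decomposition theorem (Proposition~\ref{P:h-local}) gives essential surjectivity, while the $\Hom$\+vanishing across distinct maximal ideals (Lemma~\ref{sum-decomp-unique}, resting on Lemma~\ref{two-maximal-ideals}) gives fullness. The only matters requiring a little care are the routine well\+definedness checks that localization and direct sum preserve $S$\+torsion and keep one inside the prescribed index set.
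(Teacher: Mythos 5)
Your proof is correct and follows exactly the route the paper intends: the paper gives no explicit proof beyond ``the above results yield the equivalence,'' meaning precisely your two ingredients, Proposition~\ref{P:h-local} for essential surjectivity and Lemma~\ref{sum-decomp-unique} for fullness. Your write-up simply makes the standard details (well-definedness, faithfulness, the quasi-inverse $M\mapsto(M_\m)_\m$) explicit, and they all check out.
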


 In the rest of this section, our aim is to obtain a dual version of
Proposition~\ref{P:h-local} for direct product decompositions of
$S$\+contramodule $R$\+modules (under certain assumptions).
 We start with a uniqueness/functoriality lemma dual
to Lemma~\ref{sum-decomp-unique}.

\begin{lem} \label{product-decomp-unique}
 Let $R$ be an $S$\+h-local ring.
 Let $P(\m)$ and $Q(\m)$ be two collections of $R_\m$\+modules
indexed by\/ $\m\in\Max R$.
 Assume that the modules $Q(\m)$ are $S$\+h-reduced.
 Then every $R$\+module morphism\/ $\prod_\m P(\m)\to\prod_\m Q(\m)$
is a product of $R_\m$\+module morphisms $P(\m)\to Q(\m)$.
\end{lem}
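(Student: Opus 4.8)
Let $R$ be an $S$-h-local ring. Let $P(\mathfrak{m})$ and $Q(\mathfrak{m})$ be two collections of $R_\mathfrak{m}$-modules indexed by $\mathfrak{m}\in\Max R$. Assume that the modules $Q(\mathfrak{m})$ are $S$-h-reduced. Then every $R$-module morphism $\prod_\mathfrak{m} P(\mathfrak{m})\to\prod_\mathfrak{m} Q(\mathfrak{m})$ is a product of $R_\mathfrak{m}$-module morphisms $P(\mathfrak{m})\to Q(\mathfrak{m})$.

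This is the dual of `sum-decomp-unique`. Let me think about how to prove it.

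The previous lemma's strategy: reduce to showing $\Hom_R(M(\mathfrak{m}), N(\mathfrak{n})) = 0$ for $\mathfrak{m}\neq\mathfrak{n}$, using that $M(\mathfrak{m})$ is $S$-torsion and all elements of $S$ are invertible in $R_\mathfrak{m}\otimes R_\mathfrak{n}$.

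Let me think about the dual. I want to show a morphism $f: \prod_\mathfrak{m} P(\mathfrak{m}) \to \prod_\mathfrak{m} Q(\mathfrak{m})$ decomposes as a product.

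The plan is to dualize the proof of Lemma~\ref{sum-decomp-unique}. Given an $R$\+module morphism $f\colon\prod_\m P(\m)\to\prod_\n Q(\n)$, for each $\n\in\Max R$ I would consider the composite $f_\n\colon\prod_\m P(\m)\to Q(\n)$ of $f$ with the projection of the target onto its $\n$\+th factor, and try to show that $f_\n$ factors through the projection $\pi_\n\colon\prod_\m P(\m)\to P(\n)$. Setting $K=\Ker\pi_\n=\prod_{\m\ne\n}P(\m)$, this is exactly the assertion that $f_\n|_K=0$. Granting this, $f_\n$ descends to an $R$\+linear map $g_\n\colon P(\n)\to Q(\n)$; as $P(\n)$ and $Q(\n)$ are $R_\n$\+modules, such a map is automatically $R_\n$\+linear (for $t\notin\n$ one has $t\,g_\n(t^{-1}x)=g_\n(x)=t\,t^{-1}g_\n(x)$ and $t$ acts invertibly on $Q(\n)$), and then $f=\prod_\n g_\n$, as required.

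The core of the argument is the vanishing $f_\n|_K=0$. Since $Q(\n)$ is an $R_\n$\+module, extension/restriction of scalars along $R\to R_\n$ gives a natural isomorphism $\Hom_R(K,Q(\n))\cong\Hom_{R_\n}(K\otimes_R R_\n,\,Q(\n))$, so it is enough to prove that the localization $K_\n:=K\otimes_R R_\n$ is an $R_S$\+module, i.e.\ that every $s\in S$ acts invertibly on it. Once this is known, $K_\n$ is an epimorphic image of a free $R_S$\+module and hence $S$\+h-divisible as an $R$\+module; its image in $Q(\n)$ under any $R$\+linear map is then an $S$\+h-divisible submodule of the $S$\+h-reduced module $Q(\n)$, and therefore zero.

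So the main obstacle is to verify that each $s\in S$ acts invertibly on $K_\n$. For $s\notin\n$ this is automatic. For $s\in S\cap\n$ I would compute the kernel and cokernel of multiplication by $s$ on the product $K$: these are $\prod_{\m\ne\n}\Ker(s|_{P(\m)})$ and $\prod_{\m\ne\n}\Coker(s|_{P(\m)})$, both $R/sR$\+modules whose factors are $R_\m/sR_\m$\+modules. Because localization is exact, $s$ is invertible on $K_\n$ if and only if both of these localize to $0$ at~$\n$. The subtle point — and the reason the product case is harder than the direct sum case — is that localization does not commute with infinite products. To circumvent this I would invoke the decomposition from the proof of Proposition~\ref{P:h-local}: the ring $R/sR$ is a \emph{finite} product $\prod_i (R/sR)_{\m_i}$ over the finitely many maximal ideals $\m_i$ containing $s$, and (since $s\in\n$) one of them is $\n$ itself. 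Localizing an $R/sR$\+module at~$\n$ then coincides with multiplication by the idempotent $e\in R/sR$ cutting out the $\n$\+factor, and multiplication by a single ring element does commute with products. Each factor $\Ker(s|_{P(\m)})$ (and likewise each $\Coker(s|_{P(\m)})$) is killed by $s$ and satisfies $\Ker(s|_{P(\m)})\otimes_R R_\n=0$, because $s$ is invertible in $R_\m\otimes_R R_\n$ by Lemma~\ref{two-maximal-ideals} and hence $R_\m/sR_\m\otimes_R R_\n=0$. Consequently $e$ annihilates every factor, hence the entire product, giving $(\Ker s|_K)\otimes_R R_\n=0=(\Coker s|_K)\otimes_R R_\n$ and completing the verification.
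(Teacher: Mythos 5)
Your proof is correct, but it handles the crux of the matter --- the failure of localization to commute with infinite products --- by a genuinely different mechanism than the paper's. Both arguments reduce to showing $\Hom_R\bigl(\prod_{\m\ne\n}P(\m),\,Q(\n)\bigr)=0$, and both rest on the same two ingredients: the finiteness of the set of maximal ideals containing a given $s\in S$, and Lemma~\ref{two-maximal-ideals}. The paper, however, never localizes the infinite product. It uses the short exact sequence
$$
 0\la\bigoplus_{\m\ne\n}P(\m)\la\prod_{\m\ne\n}P(\m)\la D\la 0
$$
and observes that for each $s\in S$ the same quotient $D$ also arises from the product-modulo-sum over the cofinite subset $\{\m\mid s\notin\m\}$, on which $s$ acts invertibly; hence $D$ is an $R_S$\+module and $\Hom_R(D,Q(\n))=0$. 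This reduces the problem to the pairwise vanishing $\Hom_R(P(\m),Q(\n))=0$, which is settled by localizing a single factor: $P(\m)\otimes_R R_\n$ is an $R_S$\+module. You instead localize the whole deleted product $K=\prod_{\m\ne\n}P(\m)$ at $\n$ and prove the stronger intermediate statement that $K\otimes_R R_\n$ is an $R_S$\+module, circumventing the product-versus-localization issue by noting that $\Ker(s|_K)$ and $\Coker(s|_K)$ are products of modules over $R/sR$, which by $S$\+h-locality is a finite product of local rings, so that localization at $\n$ of $R/sR$\+modules is multiplication by an idempotent --- and that does commute with arbitrary products (your verification that each factor dies after applying the idempotent is the same $R_\m\otimes_R R_\n$ argument the paper uses factorwise). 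Both routes are sound: the paper's is more economical, needing no kernel/cokernel analysis, while yours yields the sharper fact that the $\n$\+localization of the deleted product is an $R_S$\+module and treats the case $\n\cap S=\varnothing$ uniformly, whereas the paper disposes of that case at the outset by noting $Q(\n)=0$.
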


\begin{proof}
 Notice first of all that if $\n\in\Max R$ does not intersect $S$, then
all $R_\n$\+modules are $R_S$\+modules, so $Q(\n)$ is simultaneously
an $R_S$\+module and $S$\+h-reduced, hence $Q(\n)=0$.

 It suffices to check that
$\Hom_R\bigl(\prod_{\m\ne\n}P(\m),Q(\n)\bigr)=0$ for all $\n\in\Max R$.
 Consider the short exact sequence of $R$\+modules
$$
 0\la \bigoplus_{\m\in\Max R;\,\m\ne\n}P(\m)\la
 \prod_{\m\in\Max R;\,\m\ne\n}P(\m)\la D\la0.
$$
 For any $s\in S$, the set $\{\m\in\Max R\mid s\in\m\}$ is finite, so we
have a short exact sequence
$$
 0\la \bigoplus_{\m\in\Max R;\,s\notin\m}P(\m)\la
 \prod_{\m\in\Max R;\,s\notin\m}P(\m)\la D\la0
$$
with the same quotient module~$D$.
 Since the action of~$s$ is invertible in $P(\m)$ when $s\notin\m$,
it follows that $s$~acts invertibly in~$D$.
 So $D$ is an $R_S$\+module, and consequently
$\Hom_R(D,Q)=0$ for any $S$\+h-reduced $R$\+module~$Q$.

 Thus the map $\Hom_R\bigl(\prod_{\m\ne\n}P(\m),Q(\n)\bigr)\to
\Hom_R\bigl(\bigoplus_{\m\ne\n}P(\m),Q(\n)\bigr)$ is injective, and
it remains to show that $\Hom_R(P(\m),Q(\n))=0$ for all $\m\ne\n$.
 Now $P(\m)\otimes R_\n$ is an $(R_\m\otimes R_\n)$-module, hence it is
also an $R_S$\+module.
 Therefore, $\Hom_R(P(\m),Q(\n))=\Hom_R(P(\m)\otimes R_\n,\>Q(\n))=0$.
\end{proof}

%\CHR{new notation}

\begin{nota}\label{N:notation} 
Denote by $K^\bullet=K^\bullet_{R,S}$ the two-term complex of
$R$\+modules $R\stackrel\phi\to R_S$, where the term $R$ is placed
in the cohomological degree~$-1$ and the term $R_S$ is placed in
the cohomological degree~$0$.
 We will view this complex as an object of the derived category
$\Der{\Modl R}$.
\end{nota}
 When all the elements of $S$ are nonzero-divisors in $R$, one can use
the conventional quotient module $K=R_S/R$ in lieu of
the complex~$K^\bullet$.

 We will use the simplified notation
$$
 \Ext_R^n(K^\bullet,M)=\Hom_{\Der{\Modl R}}(K^\bullet,M[n]),
 \qquad n\in\mathbb Z
$$
for any $R$\+module~$M$.
 Notice that one may have $\Ext^1_R(K^\bullet,J)\neq0$ for
an injective $R$\+module $J$ when there are zero-divisors in~$S$.

 We have a distinguished triangle in the derived category $\Der{\Modl R}$
$$
 R\stackrel\phi\la R_S\la K^\bullet\la R[1].
$$
 The induced long exact sequence of modules
$\Hom_{\Der{\Modl R}}({-},M[*])$ includes such fragments as
\begin{multline} \tag{$\ast$}
 0=\Ext_R^{n-1}(R,M)\la\Ext_R^n(K^\bullet,M) \\
 \la\Ext_R^n(R_S,M)\la\Ext_R^n(R,M)=0 \quad\text{for $n<0$ or $n>1$}
\end{multline}
and
\begin{multline} \tag{$\ast\ast$}
 0\la\Ext^0_R(K^\bullet,M)\la\Hom_R(R_S,M) \la M \\
 \la \Ext^1_R(K^\bullet,M)\la\Ext^1_R(R_S,M)\la0.
\end{multline}

\begin{lem} \label{ext-from-K}
 For any $R$\+module $M$ one has:
\begin{enumerate}
\item $\Ext^n_R(K^\bullet,M)=0$ for $n<0$;
\item $\Ext^0_R(K^\bullet,M)\cong\Hom_R(R_S/\phi(R),M)$;
\item $\Ext^n_R(K^\bullet,M)\cong\Ext^n_R(R_S,M)$ for $n>1$.
\end{enumerate}
\end{lem}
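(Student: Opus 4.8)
The plan is to read off all three statements directly from the two long exact sequences $(\ast)$ and $(\ast\ast)$ displayed just before the lemma, both of which were obtained from the distinguished triangle $R\stackrel\phi\la R_S\la K^\bullet\la R[1]$. No further homological input is needed; the whole content is bookkeeping with these sequences.

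First I would dispose of (1) and (3) together using $(\ast)$. For any $n\notin\{0,1\}$ the sequence $(\ast)$ reduces to $0\to\Ext^n_R(K^\bullet,M)\to\Ext^n_R(R_S,M)\to0$, since the flanking terms $\Ext^{n-1}_R(R,M)$ and $\Ext^n_R(R,M)$ vanish, $R$ being free and hence projective. This is exactly the isomorphism $\Ext^n_R(K^\bullet,M)\cong\Ext^n_R(R_S,M)$ asserted in (3) for $n>1$. For $n<0$ the same isomorphism holds, and now $\Ext^n_R(R_S,M)=0$ because ordinary Ext-groups of modules vanish in negative degrees; this proves (1).

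For (2) I would use $(\ast\ast)$, whose left end gives $\Ext^0_R(K^\bullet,M)=\Ker\bigl(\Hom_R(R_S,M)\to M\bigr)$, where the displayed map is $\Hom_R(\phi,M)$, i.e.\ the restriction of a homomorphism $R_S\to M$ along~$\phi$. The key observation is that a homomorphism $f\colon R_S\to M$ lies in this kernel precisely when $f\circ\phi=0$, that is, when $f$ vanishes on the image $\phi(R)\subseteq R_S$; and such $f$ are exactly those factoring through the quotient $R_S/\phi(R)$. Hence the kernel is canonically identified with $\Hom_R(R_S/\phi(R),M)$, giving (2). Equivalently, one factors $\phi$ as $R\twoheadrightarrow R/I=\phi(R)\hookrightarrow R_S$ and applies $\Hom_R(-,M)$ to the short exact sequence $0\to\phi(R)\to R_S\to R_S/\phi(R)\to0$, noting that $\Hom_R(R/I,M)\hookrightarrow M$ is injective, so that the kernel of $\Hom_R(R_S,M)\to M$ agrees with the kernel of the restriction $\Hom_R(R_S,M)\to\Hom_R(\phi(R),M)$.

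The only point requiring care — the sole mild obstacle — is that $\phi$ need not be injective when $S$ contains zero-divisors, so one must work with the image $\phi(R)\cong R/I$ rather than with $R$ itself. Once the kernel in (2) is described as the homomorphisms vanishing on $\phi(R)$, however, the identification with $\Hom_R(R_S/\phi(R),M)$ is immediate and requires no regularity assumption on the elements of~$S$.
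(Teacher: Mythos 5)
Your proof is correct and takes exactly the same route as the paper, which likewise reads off (1) and (3) from the exact sequence~($\ast$) and (2) from the exact sequence~($\ast\ast$); you merely spell out the details (negative-degree vanishing of $\Ext$ between modules, and the identification of $\Ker\bigl(\Hom_R(R_S,M)\to M\bigr)$ with maps factoring through $R_S/\phi(R)$) that the paper leaves implicit. Your care about $\phi$ not being injective when $S$ contains zero-divisors is exactly the right point to flag, and your handling of it is correct.
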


\begin{proof}
 The vanishing~(1) and the isomorphism~(3) follow from
the exact sequence~($\ast$).
 The isomorphism~(2) follows from the exact sequence~($\ast\ast$).
\end{proof}

 We will denote by $\Delta(M)=\Delta_{R,S}(M)$ the $R$\+module
$\Ext^1_R(K^\bullet,M)$.
 For any $R$\+module $M$, we have a natural $R$\+module morphism
$\delta_M\colon M\to\Delta(M)$.

\begin{lem} \label{S-contra-Delta}
 Let $S$ be a multiplicative subset of a commutative ring~$R$.
 Then an $R$\+module $P$ is an $S$\+contramodule if and only if
there exists an $S$\+h-reduced $R$\+module $M$ such that $P$ is
isomorphic to $\Delta(M)$.
\end{lem}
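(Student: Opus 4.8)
My plan is to treat the two implications separately, the substantive content being that $\Delta(M)$ is an $S$\+contramodule whenever $M$ is $S$\+h-reduced. For the easy direction, given an $S$\+contramodule $P$ I would simply take $M=P$. Being an $S$\+contramodule, $P$ is $S$\+h-reduced, so the two defining vanishings $\Hom_R(R_S,P)=0$ and $\Ext^1_R(R_S,P)=0$, substituted into the exact sequence ($\ast\ast$), collapse it to $0\to P\mapr{\delta_P}\Delta(P)\to0$. Hence $\delta_P$ is an isomorphism and $P\cong\Delta(P)$, with $P$ itself serving as the required $S$\+h-reduced module.

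For the converse, let $M$ be $S$\+h-reduced and set $C^\bullet=\RHom_R(K^\bullet,M)$, so that $\Delta(M)=H^1(C^\bullet)$. I would first locate the cohomology of $C^\bullet$. Lemma~\ref{ext-from-K}(1) gives $H^n(C^\bullet)=\Ext^n_R(K^\bullet,M)=0$ for $n<0$, while the left-hand embedding in ($\ast\ast$) shows $H^0(C^\bullet)=\Ext^0_R(K^\bullet,M)\hookrightarrow\Hom_R(R_S,M)=0$. Thus $C^\bullet$ is cohomologically concentrated in degrees $\ge1$, with $\Delta(M)$ sitting in degree~$1$; this is precisely where the hypothesis on $M$ enters.

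The key point is that $\RHom_R(R_S,C^\bullet)=0$. By the derived tensor-hom adjunction, $\RHom_R(R_S,\RHom_R(K^\bullet,M))\cong\RHom_R(R_S\Lotimes_R K^\bullet,M)$, and $R_S\Lotimes_R K^\bullet=R_S\otimes_R K^\bullet$ is the two-term complex $R_S\to R_S\otimes_R R_S$ in degrees $-1,0$; since $R_S\otimes_R R_S\cong R_S$ with the localization map becoming the identity, this complex is contractible, so $R_S\Lotimes_R K^\bullet\simeq0$. Because $C^\bullet$ lives in degrees $\ge1$, its canonical truncation gives $\tau_{\le1}C^\bullet\simeq\Delta(M)[-1]$; applying $\RHom_R(R_S,-)$ to the distinguished triangle $\tau_{\le1}C^\bullet\to C^\bullet\to\tau_{\ge2}C^\bullet\to(\tau_{\le1}C^\bullet)[1]$ and using $\RHom_R(R_S,C^\bullet)=0$ yields $\RHom_R(R_S,\Delta(M))\cong\RHom_R(R_S,\tau_{\ge2}C^\bullet)$. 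The right-hand side is concentrated in degrees $\ge2$, whence $\Hom_R(R_S,\Delta(M))=0$ and $\Ext^1_R(R_S,\Delta(M))=0$, which is exactly the statement that $\Delta(M)$ is an $S$\+contramodule.

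The main obstacle is verifying the $S$\+weak cotorsion of $\Delta(M)$, since the higher groups $\Ext^{\ge2}_R(R_S,M)=H^{\ge2}(C^\bullet)$ are generally nonzero and would corrupt any naive attempt to compute $\Ext^1_R(R_S,\Delta(M))$ from a resolution of $M$. The truncation above --- equivalently, the spectral sequence $\Ext^p_R(R_S,H^q(C^\bullet))\Rightarrow0$, in which the entries $(0,1)$ and $(1,1)$ computing $\Hom_R(R_S,\Delta(M))$ and $\Ext^1_R(R_S,\Delta(M))$ neither receive nor emit differentials once the rows $q\le0$ vanish --- is what isolates $\Delta(M)$ from these higher terms. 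I would emphasize that this is exactly what lets the argument proceed without assuming $\operatorname{p.dim}_R R_S\le1$, which would otherwise be the crude way to annihilate the higher $\Ext_R(R_S,-)$ contributions.
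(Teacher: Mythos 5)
Your proposal is correct, and for the substantive direction it does genuinely more than the paper does. The easy direction (taking $M=P$ and reading off from the sequence~($\ast\ast$) that $\delta_P$ is an isomorphism) is exactly the paper's argument. But for the claim that $\Delta(M)$ is an $S$\+contramodule whenever $M$ is $S$\+h-reduced, the paper gives no argument at all: it simply cites \cite[Lemma~1.7(b)]{Pos}. You supply a self-contained and correct proof: the adjunction $\RHom_R(R_S,\RHom_R(K^\bullet,M))\cong\RHom_R(R_S\Lotimes_R K^\bullet,M)$ together with the contractibility of $R_S\otimes_R K^\bullet$ gives the vanishing of the total $\RHom$; the $S$\+h-reducedness of $M$ is precisely what kills the cohomology of $C^\bullet=\RHom_R(K^\bullet,M)$ in degrees $\le 0$ (via Lemma~\ref{ext-from-K}(1) and the embedding $\Ext^0_R(K^\bullet,M)\hookrightarrow\Hom_R(R_S,M)$); and the truncation triangle then isolates $\Hom_R(R_S,\Delta(M))$ and $\Ext^1_R(R_S,\Delta(M))$ from the possibly nonvanishing $\Ext^{\ge2}_R(R_S,M)$, with no assumption on $\operatorname{p.dim}_RR_S$ --- which is essential, since the paper uses this lemma in the course of \emph{proving} that $\operatorname{p.dim}_RR_S\le1$ for $S$\+h-nil rings. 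Your device is moreover the same derived-adjunction technique the paper itself employs in the proof of Proposition~\ref{product-decomp-exists}, (1)~$\Rightarrow$~(3), so it is entirely in the paper's spirit. One small remark: your primary argument via canonical truncation is preferable to the parenthetical spectral-sequence formulation, since $C^\bullet$ may be unbounded above; the spectral sequence does still converge here because $C^\bullet$ is cohomologically bounded below, but the truncation argument sidesteps the question entirely. In short: the citation buys brevity; your argument buys a proof readable without consulting~\cite{Pos}.
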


\begin{proof}
 For any $S$\+h-reduced $R$\+module $M$, the $R$\+module $\Delta_S(M)$
is an $S$\+con\-tramodule by~\cite[Lemma~1.7(b)]{Pos}.
 Conversely, if $P$ is an $S$\+contramodule $R$\+module, then $P$ is
$S$\+h-reduced and the morphism $\delta_P\colon P\to\Delta(P)$ is
an isomorphism in view of the exact sequence~($\ast\ast$).
\end{proof}

 We will say that \emph{the $S$\+torsion in $R$ is bounded} if there
exists an element $s_0\in S$ such that $s_0I=0$ in~$R$.

\begin{lem} \label{K-direct-sum-decomp}
 Let $R$ be an $S$\+h-local ring such that the $S$\+torsion in $R$
is bounded.
 Then the complex $K^\bullet$, as an object of the derived category\/
$\Der{\Modl R}$, is isomorphic to the direct sum\/
$\bigoplus\limits_{\m\in\Max R;\,\m\cap S\neq\varnothing} K^\bullet_\m$.
\end{lem}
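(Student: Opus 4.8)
The plan is to study $K^\bullet$ through its canonical truncation triangle and to show that the single resulting connecting morphism is ``diagonal'' with respect to the maximal ideals of $R$, the finiteness needed for this being precisely what the bounded $S$\+torsion hypothesis provides. Set $C:=R_S/\phi(R)$. Since $K^\bullet=[R\mapr{\phi}R_S]$ is concentrated in degrees $-1$ and $0$, its cohomology is $H^{-1}(K^\bullet)=\Ker\phi=I$ and $H^0(K^\bullet)=C$, and both $I$ and $C$ are $S$\+torsion $R$\+modules. The canonical truncation triangle is
\[
 I[1]\la K^\bullet\la C\overset{\partial}\la I[2], \qquad \partial\in\Ext^2_R(C,I).
\]
By Proposition~\ref{P:h-local}, $C\cong\bigoplus_{\m}C_\m$ with $C_\m:=C\otimes_R R_\m$. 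Since the $S$\+torsion in $R$ is bounded, there is $s_0\in S$ with $s_0 I=0$; then $I_\m=0$ whenever $s_0\notin\m$, and as $s_0$ lies in only finitely many maximal ideals $\m_1,\dots,\m_n$ by $S$\+h-locality, Proposition~\ref{P:h-local} yields a \emph{finite} direct sum decomposition $I\cong\bigoplus_{i=1}^n I_{\m_i}$.

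The key computation is the off-diagonal vanishing $\Ext^i_R(C_\m,I_{\m'})=0$ for every $i$ and all $\m\ne\m'$. Since $R\to R_{\m'}$ is flat, the derived adjunction between extension and restriction of scalars gives $\RHom_R(C_\m,I_{\m'})\cong\RHom_{R_{\m'}}(C_\m\otimes_R R_{\m'},\>I_{\m'})$. Now $C_\m\otimes_R R_{\m'}$ is an $(R_\m\otimes_R R_{\m'})$\+module which is $S$\+torsion, while by Lemma~\ref{two-maximal-ideals} every element of $S$ is invertible in $R_\m\otimes_R R_{\m'}$; hence $C_\m\otimes_R R_{\m'}=0$ and $\RHom_R(C_\m,I_{\m'})=0$. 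Using the finiteness of the decomposition of $I$, I then compute
\[
 \Ext^2_R(C,I)\cong\prod_\m\Ext^2_R(C_\m,I)\cong\prod_\m\bigoplus_{i=1}^n\Ext^2_R(C_\m,I_{\m_i})\cong\prod_\m\Ext^2_R(C_\m,I_\m),
\]
the last step discarding the vanishing off-diagonal summands. Thus the connecting morphism is diagonal, $\partial=\bigoplus_\m\partial^{(\m)}$ with $\partial^{(\m)}\colon C_\m\to I_\m[2]$ (and $\partial^{(\m)}=0$ unless $\m\in\{\m_1,\dots,\m_n\}$).

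It remains to assemble the decomposition. Rotating the truncation triangle exhibits $K^\bullet\cong\cone(\partial[-1])$, and since the mapping cone of a coproduct of morphisms is the coproduct of the cones in $\Der{\Modl R}$, the diagonality of $\partial$ gives $K^\bullet\cong\bigoplus_\m\cone(\partial^{(\m)}[-1])$. To recognise the summands, I apply the exact functor $-\otimes_R R_\m$ to the truncation triangle: by exactness this is the canonical truncation triangle of $K^\bullet_\m:=K^\bullet\otimes_R R_\m$, with connecting morphism $\partial\otimes_R R_\m$; and the same vanishing $C_{\m'}\otimes_R R_\m=0$ for $\m'\ne\m$ shows $\partial\otimes_R R_\m=\partial^{(\m)}$, whence $\cone(\partial^{(\m)}[-1])\cong K^\bullet_\m$. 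Finally the summands with $\m\cap S=\varnothing$ vanish, since then $I_\m=C_\m=0$. This yields $K^\bullet\cong\bigoplus_{\m\in\Max R;\,\m\cap S\ne\varnothing}K^\bullet_\m$, as required.

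The main obstacle is the passage from the componentwise vanishing $\Ext^i_R(C_\m,I_{\m'})=0$ to genuine diagonality of $\partial$: this relies on $I$ being a \emph{finite} direct sum, so that $\Ext^2_R(C_\m,-)$ commutes with it. Without the bounded $S$\+torsion hypothesis $I$ could be an infinite direct sum, $\Ext^2_R(C_\m,\bigoplus_\n I_\n)$ need not split off its diagonal part, and the argument would break down.
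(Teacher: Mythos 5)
Your proof is correct and takes essentially the same route as the paper's: both decompose the cohomology modules $I$ and $R_S/\phi(R)$ via Proposition~\ref{P:h-local} (with bounded $S$\+torsion giving the finite decomposition of $I$), prove the off-diagonal $\Ext$ vanishing using Lemma~\ref{two-maximal-ideals}, and conclude that the classifying class in $\Ext^2_R(R_S/\phi(R),I)$ is diagonal. The paper simply invokes the classification of two-term complexes by this $\Ext^2$ group, whereas you spell out the same conclusion through explicit truncation-triangle and mapping-cone manipulations.
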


\begin{proof}
 There are many ways to obtain this isomorphism, which in fact holds
for any complex of $R$\+modules $L^\bullet$ with $S$\+torsion
cohomology modules~\cite[Remark~13.4]{Pcta}.
 An explicit construction (specific to the complex $K^\bullet=
K^\bullet_{R,S}$) can be found in~\cite[proof of Lemma~13.3]{Pcta}.

 Alternatively, notice that two-term complexes of $R$\+modules
$M^\bullet= (M^{-1}\stackrel{f}\to M^0)$ with the cohomology modules
$H^{-1}(M^\bullet)=\Ker(f)$ and $H^0(M^\bullet)=\Coker(f)$
are classified, up to isomorphism in $\Der{\Modl R}$, by elements
of the group/module $\Ext^2_R(\Coker(f),\Ker(f))$.
 In the case of the complex $K^\bullet$, we have $H^{-1}(K^\bullet)=I$
and $H^0(K^\bullet)=R_S/\phi(R)$.

 Let $\n_1,\dots,\n_k$ be the maximal ideals of $R$ containing~$s_0$.
 Then, by Proposition~\ref{P:h-local}, we have
$I\cong\bigoplus\limits_{j=1}^k I_{\n_j}$ and $R_S/\phi(R)\cong
\bigoplus\limits_{\m\in\Max R;\,\m\cap S\neq\varnothing}
(R_S/\phi(R))_\m$.
 Now, for any maximal ideals $\m\ne\n$ and any $R_\m$\+module $A$,
the $R$\+modules $\Ext_R^i(A,I_\n)$ are $R_\m\otimes R_\n$\+modules
annihilated by~$s_0$.
 Hence $\Ext_R^i(A,I_\n)=0$ for all $i\ge0$, and the desired direct
sum decomposition follows.
\end{proof}

 For any $R$\+module $P$ and any maximal ideal~$\m$ of the ring $R$,
we denote by $P^\m$ the colocalization $\Hom_R(R_\m,P)$.

\begin{prop} \label{product-decomp-exists}
 Let $S$ be a multiplicative subset of a commutative ring~$R$.
 Consider the following conditions:
\begin{enumerate}
\item $R$ is $S$\+h-local.
\item Every $S$\+contramodule $R$\+module $P$ is isomorphic to
        the direct product\/ $\prod\limits_{\m\in\Max R}P^\m$.
\item Every $S$\+contramodule $R$\+module $P$ is isomorphic to
        the direct product\/
        $\prod\limits_{\m\in\Max R;\,\m\cap S\ne\varnothing}P^\m$.
\end{enumerate}
 The implications {\rm (1)} $\Leftarrow$ {\rm (2)}
$\Leftrightarrow$ {\rm (3)} hold true.
 Assuming that the $S$\+torsion in $R$ is bounded, all the three
conditions are equivalent.
\end{prop}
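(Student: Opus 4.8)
The plan is to prove the three implications separately, handling $(2)\Leftrightarrow(3)$ and $(2)\Rightarrow(1)$ without any boundedness hypothesis and then $(1)\Rightarrow(2)$ under the extra assumption. The equivalence $(2)\Leftrightarrow(3)$ is immediate and dual to the corresponding step of Proposition~\ref{P:h-local}: if $\m\cap S=\varnothing$ then $S$ is inverted in $R_\m$, so $R_\m$ is an $R_S$-module and hence $P^\m=\Hom_R(R_\m,P)=0$ for every $S$-h-reduced $P$ (any homomorphism from an $R_S$-module to an $S$-h-reduced module vanishes, since $R_S$-modules are epimorphic images of copies of $R_S$). Thus the two products in $(2)$ and $(3)$ coincide.

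For $(2)\Rightarrow(1)$ I would dualize the argument of Proposition~\ref{P:h-local}~$(2)\Rightarrow(1)$ by testing $(2)$ on well-chosen contramodules; the elementary observation driving this is that any $R$-module annihilated by some $s\in S$ is automatically an $S$-contramodule, because on such a module $s$ acts both invertibly (through $R_S$) and as zero, forcing $\Hom_R(R_S,-)=0=\Ext^1_R(R_S,-)$. For the finiteness clause, fix $s\in S$ and set $P=\bigoplus_{\m\ni s}R/\m$, which is annihilated by $s$ and hence an $S$-contramodule; a short computation gives $P^\m=R/\m$ for $\m\ni s$ and $P^\m=0$ otherwise, so $(2)$ forces $\bigoplus_{\m\ni s}R/\m\cong\prod_{\m\ni s}R/\m$. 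As the left side is semisimple while the socle of the right side is again $\bigoplus_{\m\ni s}R/\m$, the product is semisimple only for a finite index set, so finitely many maximal ideals contain $s$. For the second clause, if a prime $\p$ with $\p\cap S\neq\varnothing$ lay inside two distinct maximal ideals $\m\neq\n$, I would take $P=\Hom_R(R/\p,E)$ with $E$ the injective envelope of $R/\p$ (again annihilated by $s$, hence a contramodule); applying $\Hom_R(R_\p,-)$ to $P\cong\prod_\m P^\m$ and using $R_\p\otimes_R R_\m\cong R_\p$ for $\p\subseteq\m$, one computes $P^\p:=\Hom_R(R_\p,P)\cong\Hom_R(R_\p/\p R_\p,E)$ to be one-dimensional over the residue field $R_\p/\p R_\p$, while the decomposition exhibits it as a product with at least the two nonzero factors indexed by $\m$ and $\n$, each isomorphic to $P^\p$ — a contradiction.

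The heart of the proposition is $(1)\Rightarrow(2)$ under boundedness, which I would argue through the complex $K^\bullet$. Given an $S$-contramodule $P$, Lemma~\ref{S-contra-Delta} identifies $P$ with $\Delta(P)=\Ext^1_R(K^\bullet,P)$. By Lemma~\ref{K-direct-sum-decomp} the hypotheses yield $K^\bullet\cong\bigoplus_{\m\cap S\neq\varnothing}K^\bullet_\m$ in $\Der{\Modl R}$, whence $P\cong\Ext^1_R\bigl(\bigoplus_\m K^\bullet_\m,\,P\bigr)\cong\prod_{\m\cap S\neq\varnothing}\Ext^1_R(K^\bullet_\m,P)$, the coproduct in the first argument turning into a product. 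It then remains to identify each factor $\Ext^1_R(K^\bullet_\m,P)$ with the colocalization $P^\m$; for this I would localize the defining triangle $R\to R_S\to K^\bullet\to R[1]$ at $\m$, obtaining $R_\m\to (R_S)_\m\to K^\bullet_\m\to R_\m[1]$, apply $\Hom_{\Der{\Modl R}}(-,P)$, and read off the long exact sequence.

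The main obstacle is exactly this last identification, which hinges on $\Hom_R((R_S)_\m,P)=0=\Ext^1_R((R_S)_\m,P)$. Both vanish because $(R_S)_\m$ is an $R_S$-module (a localization of $R_S$) while $P$ is an $S$-contramodule: writing $\RHom_R(N,P)\cong\RHom_{R_S}(N,\RHom_R(R_S,P))$ for any $R_S$-module $N$, and noting that $\RHom_R(R_S,P)$ is concentrated in cohomological degrees $\geq 2$ (its $H^0=\Hom_R(R_S,P)$ and $H^1=\Ext^1_R(R_S,P)$ vanish because $P$ is $S$-h-reduced and $S$-weakly cotorsion), one obtains $\Ext^n_R(N,P)=0$ for $n\leq 1$. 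Feeding these vanishings into the long exact sequence collapses it to $\Ext^1_R(K^\bullet_\m,P)\cong\Hom_R(R_\m,P)=P^\m$, giving $P\cong\prod_{\m\cap S\neq\varnothing}P^\m$, which is $(3)$ and hence $(2)$. I expect the derived change-of-rings bookkeeping of this step, together with checking that the factorwise isomorphisms assemble into the asserted product decomposition (which could alternatively be pinned down using the uniqueness Lemma~\ref{product-decomp-unique}), to be the delicate point; the remainder is assembling the pieces.
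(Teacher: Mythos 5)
Your proof is correct, and on the central implication (1) $\Rightarrow$ (3) it follows the paper's own architecture: $P\cong\Delta(P)=\Ext^1_R(K^\bullet,P)$ via Lemma~\ref{S-contra-Delta}, the decomposition $K^\bullet\cong\bigoplus_\m K^\bullet_\m$ of Lemma~\ref{K-direct-sum-decomp} (the only place where bounded $S$\+torsion enters), and an identification of the factors $\Ext^1_R(K^\bullet_\m,P)$ with colocalizations; the equivalence (2) $\Leftrightarrow$ (3) is also handled exactly as in the paper. The factor identification itself is done by a different device: the paper extracts $\Ext^1_R(K^\bullet_\m,M)\cong\Hom_R(R_\m,\Ext^1_R(K^\bullet,M))$ from the low-degree exact sequence of the spectral sequence attached to $\RHom_R(R_\m,\RHom_R(K^\bullet,M))\simeq\RHom_R(K^\bullet_\m,M)$, which only needs $M$ to be $S$\+h-reduced, whereas you localize the defining triangle at $\m$ and use $\Hom_R((R_S)_\m,P)=0=\Ext^1_R((R_S)_\m,P)$, which needs the full $S$\+contramodule property of $P$ but is sound and somewhat more direct. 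The genuine divergence is in (2) $\Rightarrow$ (1). For the finiteness of $\{\m\in\Max R : s\in\m\}$ the paper applies~(2) to $P=R/sR$ and reproduces Matlis's argument, comparing $P$ with the submodule $\bigoplus_\m P(\m)$ and exploiting that $R/sR$ is finitely presented; your test module $\bigoplus_{s\in\m}R/\m$ together with the socle computation $\operatorname{soc}\bigl(\prod_\m R/\m\bigr)=\bigoplus_\m R/\m$ reaches the same conclusion by a shorter and more elementary route. For the clause about prime ideals the paper is the more economical one: it takes $P=R_\p/\p R_\p$ and uses only that this is an indecomposable $R$\+module, whereas you pass through $\Hom_R(R/\p,E(R/\p))$ and a dimension count over $R_\p/\p R_\p$; your version is also correct, but it silently relies on the isomorphism $\Hom_R(R_\p/\p R_\p,\,E(R/\p))\cong R_\p/\p R_\p$, which does hold over an arbitrary commutative ring (because $E(R/\p)$ is an $R_\p$\+module and is the injective envelope of the residue field over $R_\p$) yet deserves an explicit justification rather than an appeal to the Noetherian/Matlis setting.
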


\begin{proof}
 (2) $\Leftrightarrow$ (3) By Lemma~\ref{L:torsion-Hom}~(2),
the $R^\m$\+module $P^\m$ is an $S$\+contramodule for any
$S$\+contramodule $R$\+module $P$ and any maximal ideal~$\m$ of~$R$.
 When $\m\cap S=\varnothing$, any $R_\m$\+module is
an $R_S$\+module; so $P^\m=0$.

 (2) $\Rightarrow$ (1) It is easy to see that any $R$\+module
annihilated by an element of $S$ is an $S$\+contramodule.
 To prove that any prime ideal~$\p$ of $R$ intersecting $S$ is contained
in only one maximal ideal, consider the $R$\+module $P=R_\p/\p R_\p$.
 Since $P$ is annihilated by an element of $S$, it is
an $S$\+contramodule, so by~(2) we have
$P\cong\prod\limits_{\m\in\Max R}P^\m$.
 Now, if $\p\subseteq\m$ then $P$ is an $R_\m$\+module,
hence $P^\m\cong P$.
 As $P$ is a field and an indecomposable $R$\+module, there cannot be
two maximal ideals in $R$ containing~$\p$.

 Let $s\in S$ be an element.
 To prove that $s$~is only contained in finitely many maximal ideals
of $R$, we apply~(2) to the $R$\+module $P=R/sR$.
 We largely follow the argument in~\cite[proof of Theorem~3.1]{Mat}.
 Assume that $P\cong\prod\limits_{\m\in\Max R} P(\m)$, where $P(\m)$
are some $R_\m$\+modules.
 Denote by $Q$ the submodule $\bigoplus_\m P(\m)\subseteq P$.
 For every fixed $\m\in\Max R$ we have $P=P(\m)\oplus P'(\m)$ and
$Q=P(\m)\oplus Q'(\m)$, where $P'(\m)$ (respectively, $Q'(\m)$) denotes
the product (resp., the direct sum) of $P(\n)$ over all
$\n\in\Max R$, \,$\n\neq\m$.

 For any $\n\ne\m$, we have $P(\n)^\m=0$, because $P(\n)^\m$ is
an $(R_\m\otimes R_\n)$-module annihilated by~$s$, and
$s$ is invertible in $R_\m\otimes R_\n$ by
Lemma~\ref{two-maximal-ideals}.
 Therefore, $Q'(\m)^\m\subseteq P'(\m)^\m=0$, \ $P(\m)\cong
P(\m)^\m\cong P^\m$, and $Q^\m\cong P(\m)$.
 Applying~(2) to the $R$\+module $Q$ (which is annihilated by~$s$
as a submodule of $P$, hence also an $S$\+contramodule), we see that
$Q$ is isomorphic to $\prod\limits_{\m\in\Max R} P(\m)$, so $Q$ is
isomorphic to~$P$.

 Now the $R$\+module $P=R/sR$ is finitely presented, $Q\cong P$,
and $Q\cong\bigoplus_\m P(\m)$, hence $P(\m)=0$ for all but a finite
number of maximal ideals~$\m$ of the ring~$R$.
 If $P(\n)=0$ for some $\n\in\Max R$, then $P_\n=0$, since $P(\m)_\n$
is an $(R_\m\otimes R_\n)$-module annihilated by~$s$ for $\m\ne\n$,
so $P(\m)_\n=0$.
 Thus there are only finitely many maximal ideals $\m$ in $R$ for
which $P_\m\ne0$.
 Since $P_\m\simeq R_\m/sR_\m\ne0$ if and only if $s\in\m$, we
are done.

 (1) $\Rightarrow$ (3) Let $P$ be an $S$\+contramodule $R$\+module.
 By Lemma~\ref{S-contra-Delta}, there exists an $S$\+h-reduced
$R$\+module $M$ such that $P\cong\Delta(M)=\Ext_R^1(K^\bullet,M)$
(in fact, one can take $M=P$).
  By Lemma~\ref{K-direct-sum-decomp}, we have a direct sum
decomposition $K^\bullet\cong\bigoplus_\m K^\bullet_\m$ in
$\Der{\Modl R}$.
 So it remains to show that the $R$\+module $\Ext_R^1(K^\bullet_\m,M)$
is isomorphic to $\Delta(M)^\m$.

 Indeed, for any $R$\+module $N$ the derived adjunction isomorphism
$$
 \mathbf{R}\!\Hom(R_\m,\,\mathbf{R}\!\Hom_R(K^\bullet,N))\simeq
 \mathbf{R}\!\Hom_R(K^\bullet\otimes_RR_\m,\>N)
$$
leads to a spectral sequence
$$
 E_2^{p,q}=\Ext_R^p(R_\m,\Ext_R^q(K^\bullet,N))\Longrightarrow
 E_\infty^{p,q}=\mathrm{gr}^p\Ext^{p+q}_R(K^\bullet_\m,N)
$$
with the differentials $d_r^{p,q}\colon E_r^{p,q}\to E_r^{p+r,q-r+1}$.
 In low degrees, it reduces to an exact sequence
\begin{multline*}
 0\la\Ext^1_R(R_\m,\Hom_R(K^\bullet,N)) \\ \la
 \Ext^1_R(K^\bullet_\m,N)\la\Hom_R(R_\m,\Ext^1_R(K^\bullet,N))
 \\ \la\Ext^2_R(R_\m,\Hom_R(K^\bullet,N))
 \la\Ext^2_R(K^\bullet_\m,N).
\end{multline*}

 For an $S$\+h-reduced $R$\+module $M$, we have $\Hom_R(K^\bullet,M)=0$,
hence the desired isomorphism $\Ext^1_R(K^\bullet_\m,M)\cong
\Hom_R(R_\m,\Ext_R^1(K^\bullet,M))$.
\end{proof}

 The next corollary follows from Lemma~\ref{product-decomp-unique} and
Proposition~\ref{product-decomp-exists}.

\begin{cor} For any $S$\+h-local
ring $R$ with bounded $S$\+torsion, the category of $S$\+contramodule
$R$\+modules is equivalent to the Cartesian product of the categories
of $S$\+contramodule $R_\m$\+modules over all\/
$\m\in\Max R$, \,$\m\cap S\ne\varnothing$.
\end{cor}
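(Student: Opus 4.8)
The plan is to exhibit the product functor $\Psi$, sending a family $(P(\m))_\m$ of $S$\+contramodule $R_\m$\+modules (indexed by $\m\in\Max R$ with $\m\cap S\ne\varnothing$) to the $R$\+module $\prod_\m P(\m)$, as an equivalence, with the colocalization functor $\Phi\colon P\mapsto(P^\m)_\m$ as quasi-inverse. First I would check that both functors land in the stated categories. A product of $S$\+contramodule $R_\m$\+modules is an $S$\+contramodule $R$\+module: each factor is an $S$\+contramodule $R$\+module by Lemma~\ref{S-contra}~(1), and the subcategory of $S$\+contramodules is closed under products by Lemma~\ref{S-contra}~(3). Conversely, for an $S$\+contramodule $P$ the colocalization $P^\m=\Hom_R(R_\m,P)$ is again an $S$\+contramodule by Lemma~\ref{L:torsion-Hom}~(2), and it carries a natural $R_\m$\+module structure, so by Lemma~\ref{S-contra}~(1) it is an $S$\+contramodule $R_\m$\+module (and $P^\m=0$ when $\m\cap S=\varnothing$, since then $R_\m$ is an $R_S$\+module).

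Next I would verify that $\Phi$ and $\Psi$ are mutually quasi-inverse on objects. The isomorphism $\Psi\Phi(P)=\prod_\m P^\m\cong P$ is exactly the content of Proposition~\ref{product-decomp-exists}, valid here because $R$ is $S$\+h-local with bounded $S$\+torsion. For the other composite, since $\Hom_R(R_\m,-)$ commutes with products, I compute $\bigl(\prod_\n P(\n)\bigr)^\m\cong\prod_\n\Hom_R(R_\m,P(\n))$. In the diagonal term $\n=\m$, as $R\to R_\m$ is a ring epimorphism restriction of scalars is fully faithful, so $\Hom_R(R_\m,P(\m))\cong P(\m)$. For $\n\ne\m$ the module $\Hom_R(R_\m,P(\n))$ is simultaneously $S$\+h-reduced (Lemma~\ref{L:torsion-Hom}~(2)) and an $(R_\m\otimes_R R_\n)$\+module, hence an $R_S$\+module because $S$ acts invertibly on $R_\m\otimes_R R_\n$ by Lemma~\ref{two-maximal-ideals}; an $R_S$\+module that is $S$\+h-reduced vanishes, so these off-diagonal terms are zero. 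Thus $\Phi\Psi\bigl((P(\m))_\m\bigr)\cong(P(\m))_\m$.

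Finally I would establish full faithfulness of $\Psi$ via Lemma~\ref{product-decomp-unique}: since every $S$\+contramodule is in particular $S$\+h-reduced, that lemma shows every $R$\+module morphism $\prod_\m P(\m)\to\prod_\m Q(\m)$ is a product of $R_\m$\+module morphisms $P(\m)\to Q(\m)$, so the canonical map $\prod_\m\Hom_{R_\m}(P(\m),Q(\m))\to\Hom_R(\Psi((P(\m))_\m),\Psi((Q(\m))_\m))$ is surjective; it is injective because precomposing a product morphism with the canonical inclusion of the $\m$\+th factor and postcomposing with the $\m$\+th projection recovers its $\m$\+th component. As $\prod_\m\Hom_{R_\m}(P(\m),Q(\m))$ is precisely the Hom-set of the Cartesian product category, $\Psi$ is fully faithful, and together with the essential surjectivity furnished by Proposition~\ref{product-decomp-exists} this yields the asserted equivalence. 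The substantive work sits entirely in the two cited results; the only step demanding genuine care is the computation $\Phi\Psi\cong\mathrm{id}$, namely the vanishing of the off-diagonal colocalizations, where Lemma~\ref{two-maximal-ideals} and $S$\+h-reducedness enter.
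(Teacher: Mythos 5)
Your proof is correct and takes essentially the same route as the paper, which obtains the corollary directly from Lemma~\ref{product-decomp-unique} (your full-faithfulness step) and Proposition~\ref{product-decomp-exists} (your essential-surjectivity step). The details you fill in---the closure and colocalization facts from Lemmas~\ref{S-contra} and~\ref{L:torsion-Hom}, and the off-diagonal vanishing $\Hom_R(R_\m,P(\n))=0$ via Lemma~\ref{two-maximal-ideals}---are exactly the ones the paper leaves implicit (and uses explicitly in the analogous Corollary~\ref{S-h-nil-product-decomp-exists}).
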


\section{$t$-contramodules}
\label{t-contra-secn}

 In this section, $R$ will denote a commutative ring and $t\in R$ will
be its element.

 An $R$\+module $M$ is \emph{$t$\+torsion} if for every element $x\in M$
there exists an integer $n\ge 1$ such that $t^nx=0$.
 Clearly, for any fixed $R$\+module $M$, the set of all $t\in R$ such
that $M$ is a $t$\+torsion $R$\+module is a radical ideal in~$R$
(i.e., an ideal that coincides with its radical).

 Denote by $R[t^{-1}]$ the localization of the ring $R$ at
the multiplicative subset $S_t=\{1,t,t^2,\dotsc\}\subset R$
generated by~$t$.
 An $R$\+module $P$ is called a \emph{$t$\+contramodule} if
$\Hom_R(R[t^{-1}],P)=0=\Ext^1_R(R[t^{-1}],P)$.

\begin{lem} \label{t-contra}
 Let $t$ be an element in a commutative ring~$R$.
 Then the following hold true:
\begin{enumerate}
\item The full subcategory of all $t$\+contramodule $R$\+modules
is closed under the kernels, cokernels, extensions, and infinite
products in the category $\Modl R$. 
\item If $P$ is a $t$\+contramodule $R$\+module, then
the element\/ $1-t\in R$ acts by an automorphism of~$P$.
\item For any $R$\+module $P$, the set of all elements
$t\in R$ such that $P$ is a $t$\+contramodule is a radical ideal
in the ring~$R$.
\end{enumerate}
\end{lem}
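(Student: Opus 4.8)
The plan is to identify a $t$\+contramodule with an $S_t$\+contramodule for the multiplicative set $S_t=\{1,t,t^2,\dotsc\}$: indeed $R[t^{-1}]=R_{S_t}$, and the defining vanishings $\Hom_R(R[t^{-1}],P)=0=\Ext^1_R(R[t^{-1}],P)$ say exactly that $P$ is $S_t$\+h-reduced and $S_t$\+weakly cotorsion. The only extra input needed is that $\operatorname{p.dim}_R R[t^{-1}]\le1$, which I would read off from the telescope resolution
\begin{equation*}
 0\la R^{(\omega)}\mapr{\alpha} R^{(\omega)}\la R[t^{-1}]\la0,
 \qquad \alpha(e_n)=e_n-te_{n+1},
\end{equation*}
realizing $R[t^{-1}]=\varinjlim(R\mapr{t}R\mapr{t}\dotsb)$; injectivity of $\alpha$ and the cokernel computation are routine. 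Granting this, part~(1) is immediate from Lemma~\ref{S-contra}: closure under kernels, extensions and infinite products is part~(3) for $S=S_t$, and closure under cokernels is part~(4), whose hypothesis $\operatorname{p.dim}_R R_{S_t}\le1$ we have just checked.

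For part~(2) I would apply $\Hom_R(-,P)$ to the telescope resolution. This identifies the $t$\+contramodule condition with bijectivity of the map $\alpha^\ast\colon\prod_{n\ge0}P\to\prod_{n\ge0}P$, \,$(y_n)_n\mapsto(y_n-ty_{n+1})_n$, whose kernel is $\Hom_R(R[t^{-1}],P)$ and whose cokernel is $\Ext^1_R(R[t^{-1}],P)$. Injectivity of $1-t$ then follows because $(1-t)x=0$ forces $x=tx=t^2x=\dotsb$, so the constant family $(x,x,\dotsc)$ lies in $\Ker\alpha^\ast=0$ and $x=0$. For surjectivity, given $z\in P$ I use that $\alpha^\ast$ is onto to solve $y_n-ty_{n+1}=z$ for all $n$; the difference family $d_n=y_n-y_{n+1}$ satisfies $d_n-td_{n+1}=0$, hence lies in $\Ker\alpha^\ast=0$, so $(y_n)_n$ is constant and its common value $x$ satisfies $x-tx=z$.

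For part~(3), write $J=\{t\in R\mid P\text{ is a }t\text{-contramodule}\}$. Radicality is cheap: inverting $t$ and inverting $t^k$ give the same ring, $R[t^{-1}]=R[t^{-k}]$, so $t\in J\iff t^k\in J$, while $0\in J$ because $R[0^{-1}]=0$. Closure under multiplication by $r\in R$ I would obtain by the telescope idea one step up: since $t$ is already invertible in $R[t^{-1}]$, one has $R[(rt)^{-1}]=\varinjlim\bigl(R[t^{-1}]\mapr{r}R[t^{-1}]\mapr{r}\dotsb\bigr)$, giving a short exact sequence $0\to R[t^{-1}]^{(\omega)}\to R[t^{-1}]^{(\omega)}\to R[(rt)^{-1}]\to0$ of $R$\+modules; applying $\Hom_R(-,P)$ and using $\Hom_R(R[t^{-1}]^{(\omega)},P)=\prod\Hom_R(R[t^{-1}],P)=0$ together with $\Ext^1_R(R[t^{-1}]^{(\omega)},P)=\prod\Ext^1_R(R[t^{-1}],P)=0$ forces $\Hom_R(R[(rt)^{-1}],P)=0=\Ext^1_R(R[(rt)^{-1}],P)$, i.e.\ $rt\in J$.

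The main obstacle is closure under addition, $t,t'\in J\Rightarrow t+t'\in J$, since no localization relates $R[(t+t')^{-1}]$ to $R[t^{-1}]$ and $R[t'^{-1}]$ directly. My plan is to pass through the finitely generated ideal $\aaa=(t,t')$: being simultaneously a $t$\+contramodule and a $t'$\+contramodule is precisely being derived $\aaa$\+complete (derived completeness with respect to a finitely generated ideal is tested on generators), and I would show such a $P$ is derived complete with respect to every element of $\aaa$, in particular $t+t'$, by applying $\Hom_R(-,P)$ to the stable Koszul (\v Cech) complex $\bigl(R\to R[t^{-1}]\bigr)\otimes_R\bigl(R\to R[t'^{-1}]\bigr)$, which is supported on $V(t,t')\subseteq V(t+t')$, and comparing the resulting acyclicity with the telescope resolution of $R[(t+t')^{-1}]$. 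Equivalently, this is the known statement that the set of elements with respect to which a fixed module is derived complete is a radical ideal, which may be cited from the contramodule literature~\cite{Pos}; but I expect the honest \v Cech-complex computation to be where the real work lies.
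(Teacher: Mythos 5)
Your proposal is correct, and it mixes the paper's approach with some genuinely different arguments, so a comparison is in order. For part (1) you do exactly what the paper does---apply Lemma~\ref{S-contra}~(3--4) to $S_t=\{1,t,t^2,\dotsc\}$---except that you also verify the hypothesis $\operatorname{p.dim}_R R[t^{-1}]\le 1$ of Lemma~\ref{S-contra}~(4) via the telescope resolution, a point the paper leaves implicit. For part (2) your route is genuinely different: the paper observes that $K=\Ker(1-t)$ and $L=\Coker(1-t)$ are themselves $t$\+contramodules by part~(1), that $t$ acts on each of them as the identity, so each is an $R[t^{-1}]$\+module, whence $K\cong\Hom_R(R[t^{-1}],K)=0$ and likewise $L=0$; you instead compute directly with the map $\alpha^\ast$ on $\prod_n P$, and your injectivity and surjectivity manipulations (the constant family, and the difference family $d_n=y_n-y_{n+1}$ lying in $\Ker\alpha^\ast=0$) are correct. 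The paper's argument is shorter and recycles part (1); yours is self-contained and makes the contramodule condition concrete, at the cost of some bookkeeping. For part (3) the paper gives no argument at all, citing \cite[Theorem~5.1 and Remark~5.5]{Pcta}; you actually prove the easy assertions (radicality, and closure under multiplication via the iterated telescope over $R[t^{-1}]$, which is exactly right), and for closure under addition you sketch the \v Cech-complex argument and otherwise defer to the literature---which puts your treatment on par with, indeed somewhat ahead of, the paper's. Two caveats there: the correct citation is \cite{Pcta} (``Contraadjusted modules, contramodules, and reduced cotorsion modules''), not \cite{Pos}; and your parenthetical ``derived completeness with respect to a finitely generated ideal is tested on generators'' is not an available reduction but is itself the substance of what must be proved---it is essentially equivalent to the closure-under-addition statement. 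Your subsequent plan, however (localize the \v Cech complex of $(t,t')$ at $t+t'$, where it becomes acyclic because its cohomology is $(t,t')$\+power-torsion, then use tensor-Hom adjunction against $\mathbf{R}\!\Hom_R(C^\bullet,P)\simeq P$), is precisely the standard proof of that theorem, so the sketch is sound even though, as you say, that is where the real work lies.
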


\begin{proof}
 (1) is Lemma~\ref{S-contra}~(3--4) applied to the multiplicative subset
$S_t=\{1,t,t^2,\dotsc\}\subset R$.

 (2) Let $K$ and $L$ be the kernel and cokernel of the map
$1-t\colon P\to P$.
 By~(1), $K$ and $L$ are $t$\+contramodule $R$\+modules.
 However, the element $t\in R$ acts by the identity automorphisms
of $K$ and~$L$, so $K\cong\Hom_R(R[t^{-1}],K)$ and similarly for~$L$.
 Hence $K=L=0$.
 (For another argument, see~\cite[Theorem~3.3(c) and
Remark~9.2]{Pcta}.)

 (3) is~\cite[Theorem~5.1 and Remark~5.5]{Pcta} (see
also~\cite[paragraph after Theorem~7.3]{Pcta}).
\end{proof}

 In the rest of this section, our aim is to discuss the following
result, which we will use in Section~\ref{S-almost-perfect-secn}.
 It can be deduced as a particular case of~\cite[Lemma~9.4 and
Theorem~9.5]{Pcta} (corresponding to the case when the finitely 
generated ideal considered there is, in fact, principal and generated
by~$t$) or as a corollary of~\cite[Lemmas~3.4 and~4.5]{PSl1}.

\begin{prop} \label{t-contramodules-relatively-cotorsion}
 Let $R$ be a commutative ring, $t\in R$ an element, $P$
a $t$\+contramodule $R$\+module, and $F$ a flat $R$\+module.
 Assume that the $R/tR$\+module $F/tF$ is projective.
 Then\/ $\Ext_R^i(F,P)=0$ for all $i\ge1$.
\end{prop}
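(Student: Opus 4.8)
The plan is to show that the $R$\+modules $E^i:=\Ext_R^i(F,P)$ vanish for all $i\ge1$ in two stages: first that each $E^i$ is itself a $t$\+contramodule, and second that multiplication by~$t$ is surjective on $E^i$ for $i\ge1$. A contramodule Nakayama argument then forces $E^i=0$. For the first stage I would choose a free resolution $L_\bullet\to F$, so that $E^i$ is the $i$\+th cohomology of $\Hom_R(L_\bullet,P)$, whose terms $\Hom_R(L_j,P)$ are products of copies of~$P$ and hence $t$\+contramodules. Since by Lemma~\ref{t-contra}(1) the $t$\+contramodule $R$\+modules form a subcategory of $\Modl R$ closed under products, kernels and cokernels, every cohomology module $E^i$ is again a $t$\+contramodule. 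I would record the two endgame facts: an $R[t^{-1}]$\+module which is a $t$\+contramodule $N$ satisfies $N=\Hom_{R[t^{-1}]}(R[t^{-1}],N)=\Hom_R(R[t^{-1}],N)=0$, and, more delicately, a $t$\+contramodule $C$ with $tC=C$ vanishes (contramodule Nakayama, \cite{Pcta}).

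The heart of the proof is a vanishing statement extracted from the hypothesis that $F$ is flat with $F/tF$ projective over $\overline R:=R/tR$: I would show that $\Ext_R^{\ge1}(F,N)=0$ for every $\overline R$\+module~$N$. Indeed, flatness of~$F$ gives $F\Lotimes_R\overline R\simeq F/tF$ concentrated in degree~$0$, so the derived base-change adjunction yields $\RHom_R(F,N)\simeq\RHom_{\overline R}(F\Lotimes_R\overline R,\>N)=\RHom_{\overline R}(F/tF,N)$; as $F/tF$ is projective over~$\overline R$, the right-hand side is concentrated in degree~$0$, which is exactly the claim.

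Next I would pass to $\cone(t)$, the cone of $t\colon F\to F$. By tensor-hom adjunction $\RHom_R(\cone(t),P)\simeq\RHom_R(F,\>P\mapr{t}P)$, where $P\mapr{t}P$ is placed in degrees $0,1$, with cohomology $H^0=P[t]$ and $H^1=P/tP$. Both of these are $\overline R$\+modules, so by the vanishing statement the hypercohomology spectral sequence $E_2^{p,q}=\Ext_R^p(F,H^q(P\mapr{t}P))\Rightarrow\Ext_R^{p+q}(\cone(t),P)$ is concentrated in the column $p=0$ and the rows $q=0,1$; hence $\Ext_R^n(\cone(t),P)=0$ for $n\ge2$. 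Finally, the distinguished triangle $F\mapr{t}F\to\cone(t)\to F[1]$ produces the long exact sequence $\cdots\to\Ext_R^n(\cone(t),P)\to E^n\mapr{t}E^n\to\Ext_R^{n+1}(\cone(t),P)\to\cdots$. For $n\ge1$ the right-hand term vanishes, so $t$ acts surjectively on~$E^n$ (and bijectively for $n\ge2$); combined with the first stage this gives $E^n=0$ for all $n\ge1$.

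The main obstacle is the interface between the two stages in the presence of zero-divisors. When $t$ is a zero-divisor, $R\mapr{t}R$ is not injective and $F$ need not be $t$\+torsion-free, so one cannot resolve $F/tF$ or $R/tR$ by a naive two-term complex; the argument circumvents this entirely by working with $F\Lotimes_R\overline R\simeq F/tF$, where flatness kills all higher Tor regardless of the behaviour of~$t$. The second delicate point is the degree $n=1$ case: there $t$ is only shown to be surjective on~$E^1$ (the connecting map from $\Ext^1(\cone(t),P)=\Hom_{\overline R}(F/tF,P/tP)$ obstructs injectivity), so the vanishing of $E^1$ rests genuinely on the contramodule Nakayama lemma rather than on invertibility of~$t$.
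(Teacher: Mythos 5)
Your proof is correct, but it takes a genuinely different route from the paper's. The paper (Lemmas~\ref{t-contramodules-described}, \ref{ext-from-k-t-computed} and~\ref{ext-into-lim-and-derived-lim-vanishing}) represents $P$ as $\Ext^1_R(K_t^\bullet,M)$, splits this module into an extension of $\varprojlim_n M/t^nM$ by $\varprojlim^1_n\,{}_{t^n}M$, and then kills $\Ext_R^{\ge1}(F,{-})$ on both pieces; there the hypothesis on $F/tF$ enters through Ext-vanishing into modules annihilated by powers of~$t$, propagated to inverse limits and derived inverse limits via the dual Eklof lemma \cite[Proposition~18]{ET}. You instead work on the other side of the Ext: the modules $E^i=\Ext_R^i(F,P)$ are themselves $t$\+contramodules (closure properties of Lemma~\ref{t-contra}\,(1) applied to a free resolution --- note this closure under cokernels is unconditional because $R[t^{-1}]$ is countably presented and flat, hence of projective dimension at most~$1$); the element~$t$ acts surjectively on $E^i$ for $i\ge1$ by your Koszul-cone argument, where the hypothesis enters only through the change-of-rings vanishing $\Ext_R^{\ge1}(F,N)=0$ for $N$ killed by~$t$ itself, not by its powers; and the contramodule Nakayama lemma of~\cite{Pcta} finishes. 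What your route buys is the complete avoidance of the $\varprojlim/\varprojlim^1$ machinery and the dual Eklof lemma, making the argument shorter and more self-contained at the level of derived-category formalities. The price is that the hard content is outsourced to contramodule Nakayama ($C$ a $t$\+contramodule with $tC=C$ implies $C=0$), a nontrivial result of~\cite{Pcta} of roughly the same depth as the limit lemmas the paper cites; there is no circularity, since that lemma is proved in~\cite{Pcta} by elementary means independent of Ext-vanishing statements of the present kind. You also correctly isolate the one degree where Nakayama (rather than invertibility of~$t$) is genuinely needed, namely $i=1$, where the connecting map $\Hom_{R/tR}(F/tF,\,P/tP)\to E^1$ obstructs injectivity of~$t$.
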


 For the sake of completeness of the exposition in this paper, we
include a brief sketch of a proof of this crucial fact below.

 Denote by $K_t^\bullet=K_{R,t}^\bullet$ the two-term complex of
$R$\+modules $R\to R[t^{-1}]$, where the term $R$ is placed in
the cohomological degree~$-1$ and the term $R[t^{-1}]$ in
the cohomological degree~$0$.
 As above, we will use the simplified notation $\Ext_R^i(K_t^\bullet,M)=
\Hom_{\Der{\Modl R}}(K_t^\bullet,M[i])$ for any $R$\+module~$M$.

\begin{lem} \label{t-contramodules-described}
 For any $t$\+contramodule $R$\+module $P$, there exists
an $R$\+module $M$ such that $P$ is isomorphic to the $R$\+module\/
$\Ext^1_R(K^\bullet_t,M)$.
\end{lem}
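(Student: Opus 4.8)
The plan is to recognize that a $t$\+contramodule is nothing but an $S_t$\+contramodule for the multiplicative subset $S_t=\{1,t,t^2,\dotsc\}\subset R$, so that all the machinery developed around Notation~\ref{N:notation} applies verbatim with $S=S_t$. Indeed, the defining conditions $\Hom_R(R[t^{-1}],P)=0$ and $\Ext^1_R(R[t^{-1}],P)=0$ say precisely that $P$ is $S_t$\+h-reduced and $S_t$\+weakly cotorsion, which is the definition of an $S_t$\+contramodule; and the complex $K_t^\bullet=K_{R,t}^\bullet$ is exactly the complex $K^\bullet_{R,S_t}$ of Notation~\ref{N:notation}, so that $\Ext^1_R(K_t^\bullet,M)$ coincides with $\Delta_{R,S_t}(M)$ in the earlier notation.

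First I would specialize the exact sequence~($\ast\ast$), associated with the distinguished triangle $R\to R_S\to K^\bullet\to R[1]$, to the case $S=S_t$ and to the module $M=P$ itself:
\[
 0\la\Ext^0_R(K_t^\bullet,P)\la\Hom_R(R[t^{-1}],P)\la P\la\Ext^1_R(K_t^\bullet,P)\la\Ext^1_R(R[t^{-1}],P)\la0.
\]
Since $P$ is a $t$\+contramodule, both the $\Hom$ and the $\Ext^1$ terms against $R[t^{-1}]$ vanish, and the sequence collapses to show that the natural morphism $\delta_P\colon P\to\Ext^1_R(K_t^\bullet,P)$ is an isomorphism. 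One may therefore simply take $M=P$, and the desired $R$\+module $M$ is produced.

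Alternatively, I could quote Lemma~\ref{S-contra-Delta} directly for $S=S_t$: since $P$ is an $S_t$\+contramodule, that lemma furnishes an ($S_t$\+h-reduced) module $M$ with $P\cong\Delta_{R,S_t}(M)=\Ext^1_R(K_t^\bullet,M)$, and inspection of its proof confirms that $M=P$ works. I do not expect any genuine obstacle here: the statement is a transcription of Lemma~\ref{S-contra-Delta} into the language of a single element~$t$, and the only point requiring care is the identification of the two notions of contramodule and of the two complexes $K_t^\bullet$ and $K^\bullet_{R,S_t}$, after which the exact sequence~($\ast\ast$) does all the work. Note that the lemma asks only for the existence of \emph{some} module $M$, so no further bookkeeping (for instance, concerning $S_t$\+h-reducedness of $M$) is needed.
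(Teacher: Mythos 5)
Your proposal is correct and matches the paper's own proof: the paper likewise observes that the natural map $\delta_P\colon P\to\Delta_t(P)=\Ext^1_R(K^\bullet_t,P)$ is an isomorphism for any $t$\+contramodule $P$, by the same exact sequence~($\ast\ast$) underlying (the proof of) Lemma~\ref{S-contra-Delta}, and takes $M=P$. Your identification of $t$\+contramodules with $S_t$\+contramodules and of $K^\bullet_t$ with $K^\bullet_{R,S_t}$ is exactly the intended reading, and no projective-dimension hypothesis is needed since the triangle $R\to R_S\to K^\bullet\to R[1]$ exists for any multiplicative subset.
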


\begin{proof}
 In fact, the functor $\Delta_t\colon M\longmapsto
\Ext^1_R(K^\bullet_t,M)$ is left adjoint to the embedding of
the full subcategory of $t$\+contramodule $R$\+modules into
the category of $R$\+modules~\cite[Theorem~6.4 and Remark~6.5]{Pcta}.
 More generally, for any multiplicative subset $S\subset R$ such that
the projective dimension of the $R$\+module $S^{-1}R$ does not
exceed~$1$, the functor $\Delta_{R,S}=\Ext_R^1(K_{R,S}^\bullet,{-})$
is left adjoint to the embedding of the full subcategory of
$S$\+contramodule $R$\+modules into $\Modl R$
\cite[Lemma~1.7(c) and Theorem~3.4(b)]{Pos}.

 For our purposes, it is sufficient to observe that the natural morphism
$P\to\Delta_t(P)$ is an isomorphism for any $t$\+contramodule $P$
by (the proof of) Lemma~\ref{S-contra-Delta}.
 So one can take $M=P$.
\end{proof}

 For any $R$\+module $M$, we consider two projective systems of
$R$\+modules, indexed by the positive integers.
 The first one is
$$
 M/tM\longleftarrow M/t^2M\longleftarrow M/t^3M\longleftarrow\dotsb,
$$
with the obvious surjective transition maps $M/t^nM\leftarrow
M/t^{n+1}M$ forming a commutative diagram with the natural surjections
$M\to M/t^nM$.
 The second projective system is
$$
 {}_tM\longleftarrow{}_{t^2}M\longleftarrow{}_{t^3}M\longleftarrow\dotsb,
$$
where for any element $s\in R$ we denote by ${}_sM\subset M$
the submodule of all elements annihilated by the action of~$s$ in~$M$.
 The transition map ${}_{t^n}M\leftarrow{}_{t^{n+1}}M$ acts by
the multiplication with~$t$.

\begin{lem} \label{ext-from-k-t-computed}
 For any $R$\+module $M$,
%the following hold true:
%\begin{enumerate}
%\item $\Ext^i_R(K_t^\bullet,M)=0$ for all $i<0$ and $i>1$; 
%\item there is a natural isomorphism\/
%$\Ext^0_R(K_t^\bullet,M)\cong\varprojlim_n\,{}_{t^n}M$;
%\item 
here is a natural short exact sequence of $R$\+modules
$$
 0\la\varprojlim\nolimits_n^1\,{}_{t^n}M\la
 \Ext^1_R(K_t^\bullet,M)\la\varprojlim\nolimits_n M/t^nM \la0.
$$
%\end{enumerate}
\end{lem}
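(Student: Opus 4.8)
The plan is to realize $K_t^\bullet$ as a filtered colimit of two\+term Koszul complexes and to compute $\RHom_R(K_t^\bullet,M)$ as the associated homotopy limit. For each $n\ge1$ let $C_n^\bullet$ be the complex $R\xrightarrow{t^n}R$ with its terms in cohomological degrees $-1$ and~$0$, and let the transition map $C_n^\bullet\to C_{n+1}^\bullet$ be the identity in degree $-1$ and multiplication by~$t$ in degree~$0$; this is a chain map because $t^{n+1}=t\cdot t^n$. Writing $R[t^{-1}]=\varinjlim(R\xrightarrow{t}R\xrightarrow{t}\cdots)$, one checks that the honest colimit $\varinjlim_n C_n^\bullet$ is precisely the complex $R\xrightarrow{\phi}R[t^{-1}]=K_t^\bullet$. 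First I would record the telescope short exact sequence of complexes of free $R$\+modules
$$0\la\bigoplus_n C_n^\bullet\xrightarrow{1-\sigma}\bigoplus_n C_n^\bullet\la K_t^\bullet\la0,$$
where $\sigma$ is the shift assembled from the transition maps; injectivity of $1-\sigma$ is the standard bottom\+up argument and its cokernel is the colimit, so $K_t^\bullet\cong\cone(1-\sigma)$ in $\Der{\Modl R}$.

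Next I would apply the contravariant functor $\RHom_R(-,M)$ to the distinguished triangle $\bigoplus_n C_n^\bullet\xrightarrow{1-\sigma}\bigoplus_n C_n^\bullet\to K_t^\bullet\to(\bigoplus_n C_n^\bullet)[1]$. Since $\bigoplus_n C_n^\bullet$ is a complex of projectives, we have $\RHom_R(\bigoplus_n C_n^\bullet,M)=\Hom_R^\bullet(\bigoplus_n C_n^\bullet,M)=\prod_n\Hom_R^\bullet(C_n^\bullet,M)$, and a direct computation gives $\Hom_R^\bullet(C_n^\bullet,M)=(M\xrightarrow{t^n}M)$ with terms in degrees $0$ and~$1$. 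Hence $H^0=\,{}_{t^n}M$ and $H^1=M/t^nM$, and dualizing the transition maps shows that the induced tower on $H^0$ has transition maps given by multiplication by~$t$, while the induced tower on $H^1$ has the natural surjections as transition maps; these are exactly the two projective systems in the statement. The resulting triangle presents $\RHom_R(K_t^\bullet,M)$ as the fiber of $1-\sigma^\ast$ acting on $\prod_n\Hom_R^\bullet(C_n^\bullet,M)$, i.e.\ as the homotopy limit of the tower $\{\Hom_R^\bullet(C_n^\bullet,M)\}_n$.

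Finally I would read off the claim from the long exact cohomology sequence of this triangle. Cohomology commutes with products, so in degrees $0$ and~$1$ the two outer terms are $\prod_n{}_{t^n}M$ and $\prod_n M/t^nM$, and on each such product the kernel and the cokernel of $1-\sigma^\ast$ compute, respectively, the $\varprojlim$ and the $\varprojlim^1$ of the corresponding tower. Extracting the fragment of the long exact sequence around degree~$1$ identifies the cokernel of $1-\sigma^\ast$ in degree~$0$ with a submodule of $\Ext^1_R(K_t^\bullet,M)$ and the kernel of $1-\sigma^\ast$ in degree~$1$ with the corresponding quotient, yielding
$$0\la\varprojlim\nolimits_n^1\,{}_{t^n}M\la\Ext^1_R(K_t^\bullet,M)\la\varprojlim\nolimits_n M/t^nM\la0;$$
naturality in $M$ holds because every step is functorial. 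The main obstacle is the middle paragraph: one must justify that the contravariant derived Hom turns the homotopy colimit $K_t^\bullet=\varinjlim_n C_n^\bullet$ into the homotopy limit of the Hom\+tower, which is what makes the $\varprojlim$--$\varprojlim^1$ machinery applicable. This is precisely why the telescope resolution by the complexes of projectives $C_n^\bullet$ is used, rather than the complex $K_t^\bullet$ itself, whose degree\+$0$ term $R[t^{-1}]$ is flat but not projective.
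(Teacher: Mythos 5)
Your proof is correct, and it is essentially the argument behind the paper's own treatment: the paper does not spell out a proof but simply cites \cite[Lemma~6.7]{Pcta} and \cite[Sublemma~4.6]{PSl1}, where precisely this telescope computation is carried out --- realizing $R[t^{-1}]$ (and hence $K_t^\bullet$) via the Milnor-type short exact sequence $0\to\bigoplus_n C_n^\bullet\to\bigoplus_n C_n^\bullet\to K_t^\bullet\to0$ of complexes of free modules, applying $\Hom_R({-},M)$, and reading off $\varprojlim$ and $\varprojlim^1$ as the kernel and cokernel of $1-\sigma^\ast$ on the products of cohomology modules. So you have supplied, correctly and with the right identification of the two towers $\{M/t^nM\}$ and $\{{}_{t^n}M\}$ and their transition maps, the standard proof that the paper delegates to its references.
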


\begin{proof}
 See~\cite[Lemma~6.7]{Pcta} or~\cite[Sublemma~4.6]{PSl1}.
\end{proof}

%\begin{lem} \label{dual-et}
% Let $R$ be an associative ring, $F$ a left $R$\+module, and
%$B_1\leftarrow B_2\leftarrow B_3\leftarrow\dotsb$ a projective
%system of left $R$\+modules.
% Assume that the $R$\+module morphisms $B_{n+1}\to B_n$
%are surjective and, denoting by $C_n$ their kernels, one has\/
%$\Ext^1_R(F,B_1)=0$ and\/ $\Ext^1_R(F,C_n)=0$ for all $n\ge1$.
% Then\/ $\Ext^1_R(F,\,\varprojlim_n B_n)=0$.
%\end{lem}
%
%\begin{proof}
% This is a particular case of the dual version~\cite[Proposition~18]{ET}
%of the Eklof Lemma~\cite[Lemma~1]{ET}.
%\end{proof}

\begin{lem} \label{ext-into-lim-and-derived-lim-vanishing}
 Let $R$ be a commutative ring, $t\in R$ be an element, and $F$
a flat $R$\+module such that the $R/tR$\+module $F/tF$ is projective.
 Let $D_1\leftarrow D_2\leftarrow D_3\leftarrow\nobreak\dotsb$ be
a projective system of $R$\+modules such that $t^nD_n=0$ for all
$n\ge1$.
 Then the following hold true:
\begin{enumerate}
\item $\Ext_R^i(F,\,\varprojlim_n D_n)=0$ for all $i\ge1$; \par
\item $\Ext_R^i(F,\,\varprojlim_n^1 D_n)=0$ for all $i\ge1$.
\end{enumerate}
\end{lem}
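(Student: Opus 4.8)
The plan is to reduce the whole statement to the vanishing of $\Ext^i_R(F,N)$ for modules $N$ annihilated by a power of~$t$, and then to feed this into the description of $\varprojlim$ and $\varprojlim^1$ as the kernel and cokernel of a map between products.

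First I would record the basic input: \emph{if $t^nN=0$ for some $n\ge1$, then $\Ext^i_R(F,N)=0$ for all $i\ge1$.} For $n=1$ this is the derived base-change isomorphism $\RHom_R(F,N)\simeq\RHom_{R/tR}(F/tF,N)$ — valid because $F$ is flat, so $R/tR\Lotimes_RF=F/tF$ — together with the projectivity of $F/tF$ over $R/tR$. The general case then follows by induction on~$n$, applying $\Hom_R(F,-)$ to $0\to tN\to N\to N/tN\to0$, where $tN$ is killed by $t^{n-1}$ and $N/tN$ by~$t$. In particular $\Ext^i_R(F,D_n)=0$ for all $i\ge1$ and all~$n$, whence $\Ext^i_R(F,\prod_nD_n)=\prod_n\Ext^i_R(F,D_n)=0$ for $i\ge1$.

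For part~(1) the difficulty is that an arbitrary projective system need not be Mittag--Leffler, so I would replace $(D_n)$ by the system of images $L_n=\Img(\varprojlim_mD_m\to D_n)\subseteq D_n$. This system has the same limit, $\varprojlim_nL_n=\varprojlim_nD_n$; its transition maps $L_n\to L_{n-1}$ are surjective (they are the images of the compatible limit projections); and each kernel $\ker(L_n\to L_{n-1})$ is annihilated by~$t^n$, being a submodule of~$D_n$. Surjectivity yields $\varprojlim^1_nL_n=0$, hence a short exact sequence $0\to\varprojlim_nD_n\to\prod_nL_n\xrightarrow{\partial}\prod_nL_n\to0$. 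Applying $\Hom_R(F,-)$ and using $\Ext^i_R(F,\prod_nL_n)=0$ for $i\ge1$, the long exact sequence gives $\Ext^i_R(F,\varprojlim_nD_n)=0$ for $i\ge2$ and $\Ext^1_R(F,\varprojlim_nD_n)=\operatorname{coker}(\partial_*)=\varprojlim^1_n\Hom_R(F,L_n)$. The key observation is then that the maps $\Hom_R(F,L_n)\to\Hom_R(F,L_{n-1})$ are themselves surjective: applying $\Hom_R(F,-)$ to $0\to\ker(L_n\to L_{n-1})\to L_n\to L_{n-1}\to0$ and using $\Ext^1_R(F,\ker(L_n\to L_{n-1}))=0$ (the kernel is killed by~$t^n$) gives right exactness. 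Hence $\Hom_R(F,L_n)$ is Mittag--Leffler, its $\varprojlim^1$ vanishes, and part~(1) holds in all degrees.

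Finally, part~(2) follows from part~(1) by dimension shifting. Setting $W=\Img(\partial)$ for the \emph{original} system, the four-term sequence $0\to\varprojlim_nD_n\to\prod_nD_n\xrightarrow{\partial}\prod_nD_n\to\varprojlim^1_nD_n\to0$ breaks into $0\to\varprojlim_nD_n\to\prod_nD_n\to W\to0$ and $0\to W\to\prod_nD_n\to\varprojlim^1_nD_n\to0$. Since $\Ext^i_R(F,\prod_nD_n)=0$ for $i\ge1$, two applications of the long exact sequence give $\Ext^i_R(F,\varprojlim^1_nD_n)\cong\Ext^{i+1}_R(F,W)\cong\Ext^{i+2}_R(F,\varprojlim_nD_n)$ for $i\ge1$, and the right-hand group vanishes by part~(1). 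The whole difficulty is concentrated in the $i=1$ case of part~(1); passing to the image subsystem $(L_n)$, whose transition kernels are $t$-power torsion and therefore $\Ext^1_R(F,-)$-acyclic, is exactly what forces the relevant $\varprojlim^1$ of $\Hom$-groups to vanish.
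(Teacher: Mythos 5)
Your proof is correct, and its skeleton is exactly the one the paper outlines (the paper's own proof is essentially a pointer to~\cite[Lemma~9.9]{Pcta} and~\cite[Sublemma~4.7 and Lemma~3.4]{PSl1}): vanishing of $\Ext^i_R(F,N)$ for $i\ge1$ and $t$\+power-torsion $N$ via base change and induction, a dual-Eklof-type argument for $\varprojlim_n D_n$, and dimension shifting along the four-term sequence for $\varprojlim^1_n D_n$. The one point where you genuinely diverge is that the paper invokes the dual Eklof Lemma~\cite[Proposition~18]{ET} as a black box, whereas you prove the needed countable instance inline: you pass to the image subsystem $L_n=\Img(\varprojlim_m D_m\to D_n)$, whose transition maps are surjective and whose transition kernels are killed by powers of~$t$, and you conclude that $\Ext^1_R(F,\varprojlim_n D_n)\cong\varprojlim^1_n\Hom_R(F,L_n)=0$ by the Mittag--Leffler criterion. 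This buys self-containedness, and the short exact sequence $0\to\varprojlim_n D_n\to\prod_n L_n\to\prod_n L_n\to0$ also yields the degrees $i\ge2$ of part~(1) at no extra cost, uniformly with degree~$1$. Your deduction of part~(2) --- splitting $0\to\varprojlim_n D_n\to\prod_n D_n\to\prod_n D_n\to\varprojlim^1_n D_n\to0$ at $W=\Img(\partial)$ and shifting twice --- is precisely what the paper means by ``the explicit construction of $\varprojlim^1$''.
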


\begin{proof}
% (1) First of all, we notice that $\Ext_R^i(F,B)=
%\Ext_{R/tR}^i(F/tF,B)=0$ for any $R$\+module $B$ such that $tB=0$
%and all $i\ge1$.
% It follows that $\Ext_R^i(F,C)=0$ for any $R$\+module $C$ such that
%$t^nC=0$ for some $n\ge1$.
%
% Now denote by $B_n\subset D_n$ the images of the projection maps
%$\varprojlim_m D_m\to D_n$.
% Then the morphisms $B_{n+1}\to B_n$ are surjective, their kernels
%$C_n$ are annihilated by~$t^n$, and $\varprojlim_n B_n=
%\varprojlim_n D_n$.
% It remains to apply Lemma~\ref{dual-et}.
%
% (2) By the definition of $\varprojlim_n^1$, we have an exact sequence
%$$
% 0\la\varprojlim\nolimits_n D_n\la\prod_{n\ge1} D_n
% \la\prod_{n\ge1} D_n\la \varprojlim\nolimits_n^1 D_n \la0.
%$$
% As we have seen, $\Ext^i_R(F,D_n)=0$ for all~$n$, hence
%$\Ext^i_R\bigl(F,\,\prod_n D_n\bigr)=0$ for all $i\ge1$.
% According to~(1), $\Ext^i_R(F,\,\varprojlim_n D_n)=0$.
%
% The class of all $R$\+modules $C$ such that $\Ext^i_R(F,C)=0$ for all
%$i\ge1$ is closed under the cokernels of injective morphisms.
% Thus, denoting by $L$ the image of the morphism $\prod_nD_n\to
%\prod_nD_n$, we have $\Ext^i_R(F,L)=0$ and
%$\Ext^i_R(F,\,\varprojlim_n^1 D_n)=0$.
%
 The argument for part~(1) is based on the dual
version~\cite[Proposition~18]{ET} of the Eklof Lemma~\cite[Lemma~1]{ET};
and part~(2) is deduced from part~(1) using the explicit construction
of $\varprojlim^1$.
 See~\cite[Lemma~9.9]{Pcta} or~\cite[Sublemma~4.7 and Lemma~3.4]{PSl1}
for the details.
\end{proof}

 Now we are prepared to conclude our argument.

\begin{proof}[Proof of
Proposition~\ref{t-contramodules-relatively-cotorsion}]
 According to Lemma~\ref{t-contramodules-described}, it suffices
to show that $\Ext^i_R(F,P)=0$ for $i\ge1$, where $F$ is a flat
$R$\+module such that the $R/tR$\+module $F/tF$ is projective and
$P=\Ext^1_R(K^\bullet_t,M)$ for some $R$\+module~$M$.
 This follows from Lemmas~\ref{ext-from-k-t-computed}
and~\ref{ext-into-lim-and-derived-lim-vanishing}~(1--2).
\end{proof}

\section{$S$-$\mathrm h$-nil rings} \label{S-h-nil-secn}

%\CHG{ In this section, as in
%Sections~\ref{S-divisible-torsion-secn}--\ref{S-h-local-secn} above,
%$S$ will denote a multiplicative subset of
%a commutative ring~$R$, \,$\phi\colon R\to R_S$ the localization map,
%and $I\subset R$ the kernel of~$\phi$.
% The two-term complex $R\stackrel\phi\to R_S$, with the term $R$ sitting
%in the cohomological degree~$-1$ and the term $R_S$ in the cohomological
%degree~$0$, is denoted by $K^\bullet=K^\bullet_{R,S}$.}

%\CHR{The paragraph which was after Lemma 6.2 is moved here:}
 The aim of this section is to show that, for any $S$\+h-nil ring $R$,
the projective dimension of the $R$\+module $R_S$ does not exceed~$1$.
 We will also obtain the direct product decomposition of
Proposition~\ref{product-decomp-exists}~(1) $\Rightarrow$ (2), (3)
without the bounded $S$\+torsion assumption on $R$, but assuming
that $R$ is $S$\+h-nil instead.

\begin{defn}
 We say that the ring $R$ is \emph{$S$\+h-nil} if every element $s\in S$
is contained only in finitely many maximal ideals of $R$ and every prime
ideal of $R$ intersecting $S$ is maximal.
\end{defn}

 Clearly, if $R$ is $S$\+h-nil, then it is $S$\+h-local.

\begin{lem} \label{L:S-h-nil}
 Let $S$ be a multiplicative subset of a commutative ring~$R$.
 Then the ring $R$ is $S$\+h-nil if and only if for any element $s\in S$
the ring $R/sR$ is a finite product of local rings and, in each of them,
the maximal ideal is nil (i.e., consists of nilpotent elements).
\end{lem}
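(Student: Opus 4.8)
The plan is to translate the two defining conditions of an $S$\+h-nil ring into properties of the fibre rings $R/sR$ by means of the standard bijection between the prime ideals of $R/sR$ and the prime ideals of $R$ containing~$s$ (equivalently, meeting $S$ in the element~$s$). Under this dictionary, the condition that every $s\in S$ lies in only finitely many maximal ideals of $R$ becomes the assertion that $R/sR$ has only finitely many maximal ideals, and the condition that every prime of $R$ meeting $S$ is maximal becomes the assertion that every prime ideal of $R/sR$ is maximal, i.e.\ that $R/sR$ is zero-dimensional. Thus the lemma reduces to the purely ring-theoretic statement that a commutative ring $A$ is a finite product of local rings with nil maximal ideals if and only if $A$ has finitely many prime ideals, all of them maximal; I would then apply this with $A=R/sR$ for each $s\in S$.

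First I would dispose of the easy direction of this ring-theoretic equivalence. A local ring with nil maximal ideal has its maximal ideal equal to its nilradical, and hence a unique prime ideal, which is maximal. Consequently a finite product $A=\prod_{i=1}^n A_i$ of such rings has exactly $n$ prime ideals, all maximal; this supplies the half of the argument needed to deduce, from the product form of the rings $R/sR$, that $R$ is $S$\+h-nil.

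The substance lies in the converse. Starting from a commutative ring $A$ with finitely many prime ideals $\m_1,\dots,\m_n$, all maximal, I would observe that the nilradical $N$ coincides with the Jacobson radical $\bigcap_{i=1}^n\m_i$, so $A/N$ is reduced and zero-dimensional with the same maximal ideals; by the Chinese Remainder Theorem $A/N\cong\prod_{i=1}^n A/\m_i$ is a finite product of fields. The corresponding complete orthogonal system of idempotents of $A/N$ then has to be transported back to $A$. The key step, and the place where care is required, is that idempotents lift modulo the nil ideal $N$, and moreover a full finite orthogonal family can be lifted to an orthogonal family in $A$; this yields $A\cong\prod_{i=1}^n A_i$ with each $A_i$ local with maximal ideal $\m_iA_i$. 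Since $A$, hence each $A_i$, is zero-dimensional, the unique prime of $A_i$ is simultaneously its maximal ideal and its nilradical, so that maximal ideal is nil, as required.

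Finally I would assemble the two implications. If $R$ is $S$\+h-nil, then for each $s\in S$ the ring $R/sR$ is zero-dimensional with finitely many primes, hence a finite product of local rings with nil maximal ideals by the converse above; conversely, if every $R/sR$ has this product form, then reading off its finitely many maximal ideals and the maximality of all its primes through the prime correspondence recovers both defining conditions of an $S$\+h-nil ring. The only genuinely nontrivial ingredient is the simultaneous lifting of the orthogonal idempotent family modulo the nil ideal $N$; everything else is a direct application of the prime correspondence together with the elementary structure theory of zero-dimensional rings with nil Jacobson radical.
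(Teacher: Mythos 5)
Your proof is correct, but it follows a genuinely different route from the paper's. You reduce the lemma, via the prime correspondence, to a self-contained statement about an abstract commutative ring $A$ (namely $A=R/sR$): $A$ has finitely many primes, all maximal, if and only if $A$ is a finite product of local rings with nil maximal ideals. You then prove the nontrivial direction by observing that the nilradical $N$ equals the Jacobson radical, applying the Chinese Remainder Theorem to get $A/N\cong\prod_i A/\m_i$, and lifting the complete orthogonal family of idempotents through the nil ideal $N$ -- which, as you note, is the only step requiring real care (it is the standard fact that idempotents lift modulo nil ideals, with the last idempotent taken to be $1$ minus the sum of the others to preserve completeness). The paper avoids idempotent lifting entirely: it quotes the decomposition already established in the proof of Proposition~\ref{P:h-local} for $S$\+h-local rings, namely $R/sR\cong\bigoplus_{s\in\m}(R/sR)_\m$, a finite product of \emph{localizations}, which ultimately rests on Lemma~\ref{two-maximal-ideals} (every $s\in S$ becomes invertible in $R_\m\otimes_R R_\n$ for $\m\ne\n$); both directions of the ``nil'' condition then follow from the single observation that the nilradical is the intersection of all primes, so a local ring has no nonmaximal primes exactly when its maximal ideal is nil. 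The trade-off: your argument is elementary and independent of the $S$\+h-local machinery of Section~\ref{S-h-local-secn}, so it could stand on its own; the paper's is shorter given what is already proved, and it identifies the local factors concretely as the localizations $(R/sR)_\m$, which is the form in which the decomposition gets reused elsewhere in the paper.
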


\begin{proof}
 For any $S$\+h-local ring $R$ and an element $s\in S$, the ring $R/sR$
is a finite product of local rings, as it was observed in the proof of
Proposition~\ref{P:h-local}.
 It remains to recall that the nilradical of a commutative ring is equal to
the intersection of all its prime ideals, so a local ring has no nonmaximal
prime ideals if and only if its maximal ideal is nil.
\end{proof}

% \CHG{Moved at the beginning of the section 
% Our aim in this section is to show that, for any $S$\+h-nil ring $R$,
%the projective dimension of the $R$\+module $R_S$ does not exceed~$1$.
% We will also obtain the direct product decomposition of
%Proposition~\ref{product-decomp-exists}~(1) $\Rightarrow$ (2), (3)
%without the bounded $S$\+torsion assumption on $R$, but assuming
%that $R$ is $S$\+h-nil instead.}

 Given an ideal $\aaa\subset R$, an $R$\+module $M$ is said to be
\emph{$\aaa$\+torsion} if it is $t$\+torsion (in the sense of
the definition in Section~\ref{t-contra-secn}) for every $t\in\aaa$.

\begin{lem} \label{S-torsion-m-torsion-local}
 Let $R$ be an $S$\+h-nil ring and\/ $\m$ a maximal ideal of $R$
intersecting~$S$.
 Then the following are equivalent for an $R$\+module $M$:
\begin{enumerate}
\item $M$ is an\/ $\m$\+torsion $R$\+module.
\item $M$ is an $S$\+torsion $R_\m$\+module.
\end{enumerate}
\end{lem}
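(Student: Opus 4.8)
The plan is to establish the two implications separately; in both directions the key input is Lemma~\ref{L:S-h-nil}, which says that for $s\in S$ the quotient $R/sR$ is a finite product of local rings each with nil maximal ideal. Throughout I will use that, since $\m$ intersects $S$, one may choose an element $s\in\m\cap S$, and that for $s\in S$ every prime of $R$ containing $s$ is one of the finitely many maximal ideals containing~$s$.

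For (2) $\Rightarrow$ (1), assume $M$ is an $S$\+torsion $R_\m$\+module and fix $x\in M$ and $t\in\m$; I must produce $n$ with $t^nx=0$. The case $x=0$ is trivial, so assume $x\neq0$. By $S$\+torsion there is $s\in S$ with $sx=0$; since $M$ is an $R_\m$\+module, an element $s\notin\m$ would act invertibly and force $x=0$, so in fact $s\in\m\cap S$. Then the cyclic $R_\m$\+submodule $R_\m x$ is a module over $R_\m/sR_\m=(R/sR)_\m$, which by Lemma~\ref{L:S-h-nil} (localizing the product decomposition of Proposition~\ref{P:h-local} at $\m$) is a local ring with nil maximal ideal. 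As $t\in\m$, its image lies in that nil maximal ideal, so some power $t^n$ maps to~$0$ and $t^nx=0$ in~$M$.

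For (1) $\Rightarrow$ (2), assume $M$ is $\m$\+torsion. First, $M$ is $S$\+torsion: choosing $s\in\m\cap S$, for each $x\in M$ the element $s$ acts nilpotently, so $s^nx=0$ with $s^n\in S$. It remains to show $M$ is an $R_\m$\+module, that is, every $r\in R\setminus\m$ acts invertibly on~$M$. I reduce to cyclic submodules: fix $x\in M$ with $x\neq0$ and note $Rx\cong R/\Ann(x)$, where by the previous step $\Ann(x)$ contains some $u=s^N\in S$. Hence the support $V(\Ann(x))$ is contained in $V(u)$, which consists of finitely many maximal ideals $\m_1,\dots,\m_k$, all containing~$u$, among them~$\m$.

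The crux is to show $V(\Ann(x))=\{\m\}$, which is exactly where the $S$\+h-nil hypothesis (beyond $S$\+h-locality) enters. Suppose some $\m_j\neq\m$ lay in the support, so $\Ann(x)\subseteq\m_j$; choosing $t\in\m\setminus\m_j$ (possible since distinct maximal ideals are incomparable), the $\m$\+torsion hypothesis gives $t^nx=0$, whence $t^n\in\Ann(x)\subseteq\m_j$ and $t\in\m_j$, a contradiction. Therefore $\sqrt{\Ann(x)}=\m$, so $R/\Ann(x)$ is local with nil maximal ideal $\m/\Ann(x)$, and every $r\notin\m$ is a unit in $R/\Ann(x)$, hence acts invertibly on~$Rx$. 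Finally, for $r\in R\setminus\m$ and arbitrary $x,y\in M$, invertibility on $Rx$ gives injectivity (from $rx=0$ one gets $x=0$) and invertibility on $Ry$ gives surjectivity (solving $rx=y$ inside $Ry$), so $r$ acts bijectively on~$M$ and $M$ is an $R_\m$\+module. I expect the support computation to be the only genuinely delicate point; everything else is routine once the reduction to modules over $R/uR$ is in place.
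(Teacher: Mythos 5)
Your proof is correct and follows essentially the same route as the paper's: your direction (2) $\Rightarrow$ (1) is identical to the paper's (reduce to the cyclic module $R_\m x$, which is a module over $R_\m/sR_\m$, and invoke Lemma~\ref{L:S-h-nil} to get that its maximal ideal is nil), and your direction (1) $\Rightarrow$ (2) fills in, via cyclic submodules, the standard fact that any $\m$-torsion module is an $R_\m$-module, which the paper states without proof. One conceptual correction, however: your claim that the support computation is ``exactly where the $S$-h-nil hypothesis enters'' in (1) $\Rightarrow$ (2) is mistaken --- the paper explicitly notes that this direction holds \emph{without} the $S$-h-nil hypothesis, and your own argument shows why once the detour through $V(u)$ is removed. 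For $x\ne0$ and any $t\in\m$, the $\m$-torsion hypothesis gives $t^n x=0$, i.e.\ $t\in\sqrt{\Ann(x)}$; hence $\m\subseteq\sqrt{\Ann(x)}\subsetneq R$, so $\sqrt{\Ann(x)}=\m$ by maximality of $\m$, with no need to know that $V(\Ann(x))$ is finite or that primes over $u$ are maximal. The rest of your argument (units of the local ring $R/\Ann(x)$ with nil maximal ideal, then bijectivity of $r\notin\m$ on $M$ via injectivity on $Rx$ and surjectivity onto $Ry$) then runs verbatim. The $S$-h-nil hypothesis is genuinely needed only in the direction (2) $\Rightarrow$ (1).
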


\begin{proof}
 (1) $\Rightarrow$ (2) holds without the $S$\+h-nil hypothesis.
 If $M$ is $\m$\+torsion and $s\in\m\cap S$,
then $M$ is $s$\+torsion, hence also $S$\+torsion.
  Furthermore, for any commutative ring $R$ with a maximal ideal~$\m$,
any $\m$\+torsion $R$\+module is an $R_\m$\+module.

 (2) $\Rightarrow$ (1) Let $M$ be an $S$\+torsion $R_\m$\+module.
 Then for any element $x\in M$ there exists $s\in S$ such that $sx=0$
in~$M$.
 Now, if $s\notin\m$, then $x=0$; and if $s\in\m$, then $R_\m x
\subset M$ is a module over the ring $R_\m/sR_\m$.
 Since the maximal ideal of the local ring $R_\m/sR_\m$ is nil by
Lemma~\ref{L:S-h-nil}, any $(R_\m/sR_\m)$\+module is
$\m$\+torsion.
\end{proof}

\begin{lem} \label{S-torsion-m-torsion-direct-sum-decomp}
 Let $R$ be an $S$\+h-nil ring and $M$ an $R$\+module.
 Then the following are equivalent:
\begin{enumerate}
\item $M$ is $S$\+torsion.
\item There exists a collection of\/ $\m$\+torsion $R$\+modules
$M(\m)$, one for each\/ $\m\in\Max R$, \,$\m\cap S\ne\varnothing$,
such that $M$ is isomorphic to\/ $\bigoplus_\m M(\m)$.
\end{enumerate}
 The direct sum decomposition in~{\rm(2)} is unique if it exists.
\end{lem}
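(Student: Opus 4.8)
The plan is to prove the equivalence by reducing everything to the results already established for $S$\+h-local rings, invoking the $S$\+h-nil hypothesis only through Lemma~\ref{S-torsion-m-torsion-local}, which translates between the $\m$\+torsion condition and the $S$\+torsion $R_\m$\+module condition. Since an $S$\+h-nil ring is in particular $S$\+h-local, Proposition~\ref{P:h-local} will supply the direct sum decomposition needed for one implication, and Lemma~\ref{sum-decomp-unique} will take care of the uniqueness statement.

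For the implication (1)~$\Rightarrow$~(2) I would start from an $S$\+torsion module $M$ and apply Proposition~\ref{P:h-local}~(3) to obtain the isomorphism $M\cong\bigoplus_{\m\cap S\ne\varnothing}M_\m$, where each $M_\m$ is an $R_\m$\+module. The key observation is that every localization $M_\m=R_\m\otimes_R M$ is again $S$\+torsion, since $R_S\otimes_R M_\m\cong R_\m\otimes_R(R_S\otimes_R M)=0$ because $M$ is $S$\+torsion and localizations commute. Thus each $M_\m$ is an $S$\+torsion $R_\m$\+module, and as $\m\cap S\ne\varnothing$, Lemma~\ref{S-torsion-m-torsion-local}~(2)~$\Rightarrow$~(1) shows $M_\m$ is $\m$\+torsion. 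Setting $M(\m)=M_\m$ then yields the desired decomposition. For the converse (2)~$\Rightarrow$~(1), I would note that each $\m$\+torsion summand $M(\m)$ is $S$\+torsion by Lemma~\ref{S-torsion-m-torsion-local}~(1)~$\Rightarrow$~(2) (an element $s\in\m\cap S$ witnesses $s$\+torsion, hence $S$\+torsion), and $S$\+torsionness passes to arbitrary direct sums because $R_S\otimes_R({-})$ commutes with coproducts; hence $M\cong\bigoplus_\m M(\m)$ is $S$\+torsion.

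For uniqueness, suppose $\bigoplus_\m M(\m)\cong\bigoplus_\m N(\m)$ with both families $\m$\+torsion. Each $M(\m)$ and $N(\m)$ is then an $S$\+torsion $R_\m$\+module (using that $\m$\+torsion modules are $R_\m$\+modules together with the previous paragraph), so Lemma~\ref{sum-decomp-unique} applies both to a given isomorphism $f$ and to its inverse $g$, writing $f=\bigoplus_\m f_\m$ and $g=\bigoplus_\m g_\m$ with $R_\m$\+linear components $f_\m\colon M(\m)\to N(\m)$ and $g_\m\colon N(\m)\to M(\m)$. Since $gf$ and $fg$ are the identity maps, the componentwise identities $g_\m f_\m=\mathrm{id}$ and $f_\m g_\m=\mathrm{id}$ force each $f_\m$ to be an isomorphism, giving $M(\m)\cong N(\m)$ for every~$\m$.

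I expect no serious obstacle: the genuinely new content of the lemma has been isolated in Lemma~\ref{S-torsion-m-torsion-local}, whose (2)~$\Rightarrow$~(1) direction is exactly where the $S$\+h-nil assumption enters (through the nil maximal ideals of $R/sR$ provided by Lemma~\ref{L:S-h-nil}). The only point requiring mild care is the verification that the localizations $M_\m$ stay $S$\+torsion, which is what makes both Lemma~\ref{S-torsion-m-torsion-local} and Lemma~\ref{sum-decomp-unique} applicable; everything else is a formal bookkeeping of the componentwise behaviour of morphisms between such direct sums.
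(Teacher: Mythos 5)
Your proof is correct and follows essentially the same route as the paper's: both directions reduce to Proposition~\ref{P:h-local} and Lemma~\ref{S-torsion-m-torsion-local}, and uniqueness comes from Lemma~\ref{sum-decomp-unique} applied to a given isomorphism and its inverse. The only difference is that you spell out explicitly (via commutation of localizations) why each $M_\m$ remains $S$\+torsion, a point the paper leaves implicit.
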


\begin{proof}
 (1) $\Rightarrow$ (2) Let $M$ be an $S$\+torsion $R$\+module.
 According to Proposition~\ref{P:h-local}, we have
$M\cong\bigoplus\limits_{\m\in\Max R;\,\m\cap S\ne\varnothing} M_\m$.
 For every~$\m$, the $R_\m$\+module $M(\m)=M_\m$ is also
$S$\+torsion, so it is $\m$\+torsion by
Lemma~\ref{S-torsion-m-torsion-local}.

 (2) $\Rightarrow$ (1) holds without the $S$\+h-nil hypothesis.
 It is sufficient to observe that, for any
$\m\in\Max R$, \,$\m\cap S\ne\varnothing$, any
$\m$\+torsion $R$\+module is $S$\+torsion
by Lemma~\ref{S-torsion-m-torsion-local}.

 The direct sum decomposition in~(2) is unique and functorial
(when it exists) by Lemma~\ref{sum-decomp-unique}, since any
$\m$\+torsion $R$\+module is an $S$\+torsion $R_\m$\+module.
 (This is valid for any $S$\+h-local ring~$R$.)
\end{proof}

 Let $\aaa\subset R$ be an ideal.
 An $R$\+module $P$ is said to be an \emph{$\aaa$\+contramodule} if
it is a $t$\+contramodule for every element $t\in\aaa$.
 According to Lemma~\ref{t-contra}~(3), it suffices to check this
condition for a set of generators of an ideal $\aaa$ (or even for a set
of generators of any ideal in $R$ whose radical coincides with
the radical of~$\aaa$).
 According to Lemma~\ref{t-contra}~(1), the full  subcategory of all
$\aaa$\+contramodule $R$\+modules is closed under the kernels,
cokernels, extensions, and infinite products in $\Modl R$.

\begin{lem} \label{a-contra-torsion-ext}
 Let\/ $\aaa$ be an ideal in a commutative ring~$R$.
 Then the following hold true:
\begin{enumerate}
\item If $N$ is an\/ $\aaa$\+torsion $R$\+module, then, for every
$R$\+module $X$, the $R$\+modules\/ $\Ext_R^i(N,X)$ are\/
$\aaa$\+contramodules for all $i\ge0$.
\item If $Q$ is an\/ $\aaa$\+contramodule $R$\+module, then,
for every $R$\+module $Y$, the $R$\+modules\/ $\Ext_R^i(Y,Q)$ are\/
$\aaa$\+contramodules for all $i\ge0$.
\end{enumerate}
\end{lem}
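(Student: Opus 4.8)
The plan is to reduce both parts to the case of a principal ideal and then realize the relevant $\Ext$\+modules as cohomology of complexes of $t$\+contramodules. Concretely, since an $\aaa$\+torsion module is $t$\+torsion for every $t\in\aaa$, and an $R$\+module is an $\aaa$\+contramodule exactly when it is a $t$\+contramodule for every $t\in\aaa$, it suffices to prove, for each fixed $t\in\aaa$ and all $i\ge0$: if $N$ is $t$\+torsion then $\Ext_R^i(N,X)$ is a $t$\+contramodule, and if $Q$ is a $t$\+contramodule then $\Ext_R^i(Y,Q)$ is a $t$\+contramodule. Letting $t$ range over $\aaa$ then yields the two stated assertions.

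The common engine is Lemma~\ref{t-contra}~(1): the full subcategory of $t$\+contramodule $R$\+modules is closed under kernels, cokernels, and products in $\Modl R$. Consequently the cohomology of any complex of $t$\+contramodules again consists of $t$\+contramodules: with the usual notation for a complex $C^\bullet$ of $t$\+contramodules, the module $Z^i=\Ker d^i$ is a kernel, $B^i=\Img d^{i-1}=\Coker(Z^{i-1}\hookrightarrow C^{i-1})$ is a cokernel, and hence $H^i=\Coker(B^i\hookrightarrow Z^i)$ is again a $t$\+contramodule. So in each part it remains only to exhibit $\Ext_R^i$ as the cohomology of a complex whose terms are manifestly $t$\+contramodules.

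For the second assertion I would take a projective resolution $P_\bullet\to Y$ and compute $\Ext_R^i(Y,Q)=H^i(\Hom_R(P_\bullet,Q))$; each term $\Hom_R(P_j,Q)$ is a direct summand of a product of copies of $Q$, hence a $t$\+contramodule. For the first assertion I would instead take an injective resolution $X\to E^\bullet$, so that $\Ext_R^i(N,X)=H^i(\Hom_R(N,E^\bullet))$, and prove that $\Hom_R(N,E)$ is a $t$\+contramodule for any injective $E$ and any $t$\+torsion $N$. Here I would use the Ext--Tor duality $\Ext_R^n(R[t^{-1}],\Hom_R(N,E))\cong\Hom_R(\Tor_n^R(R[t^{-1}],N),E)$, valid for injective $E$: since $R[t^{-1}]$ is a flat $R$\+module and $N$ is $t$\+torsion, one has $\Tor_n^R(R[t^{-1}],N)=0$ for all $n\ge0$ (for $n=0$ this is $R[t^{-1}]\otimes_RN=0$), whence both $\Hom_R(R[t^{-1}],\Hom_R(N,E))=0$ and $\Ext_R^1(R[t^{-1}],\Hom_R(N,E))=0$, i.e.\ $\Hom_R(N,E)$ is $t$\+h\+reduced and $t$\+weakly cotorsion.

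The argument is essentially formal once these pieces are assembled. The step that deserves the most care is the closure of $t$\+contramodules under cokernels, which rests on $\operatorname{p.dim}_RR[t^{-1}]\le1$; this holds automatically in the principal case and is precisely why reducing to a single element $t$ at the outset is the natural move. The only other point demanding attention is to invoke the Ext--Tor isomorphism with the correct variance and to verify it already in degree~$0$ (the Hom--tensor adjunction $\Hom_R(R[t^{-1}],\Hom_R(N,E))\cong\Hom_R(R[t^{-1}]\otimes_RN,E)$), so that $t$\+h\+reducedness is obtained together with the $\Ext^1$\+vanishing rather than handled separately.
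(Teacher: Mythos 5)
Your proof is correct, but it takes a genuinely different route from the paper: the paper's entire proof is a citation to~\cite[Lemma~6.2(b)]{Pcta}, whereas you give a self-contained argument built from ingredients already available in the text. Your reduction to a single element $t\in\aaa$ is exactly right (both ``$\aaa$\+torsion'' and ``$\aaa$\+contramodule'' are defined elementwise), and the two pillars you rest on are sound: Lemma~\ref{t-contra}~(1) holds unconditionally because $\operatorname{p.dim}_RR[t^{-1}]\le1$ for any element~$t$, so cohomology of a complex of $t$\+contramodules is again a $t$\+contramodule; and the Ext--Tor duality $\Ext_R^n(R[t^{-1}],\Hom_R(N,E))\cong\Hom_R(\Tor_n^R(R[t^{-1}],N),E)$ for injective $E$ is precisely the device the paper itself uses in the proof of Lemma~\ref{L:torsion-Hom}~(1), so your part~(1) is a direct extension of that argument from $\Hom$ to higher $\Ext$ via injective resolutions, and your part~(2) is the dual extension via projective resolutions (direct summands of products of copies of $Q$ are handled by closure under kernels, or just additivity of the defining functors). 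In effect you have carried out, for $t$\+ and $\aaa$\+contramodules, the plan that the paper's remark immediately following this lemma sketches for the $S$\+contramodule variant (``deduced from Lemma~\ref{L:torsion-Hom} using Lemma~\ref{S-contra}~(3--4)''). What the citation buys is brevity and a pointer to a more general statement in~\cite{Pcta}; what your argument buys is a proof that keeps the paper self-contained and makes transparent why no hypothesis on $\aaa$ (such as finite generation) is needed here.
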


\begin{proof}
 This is a particular case of~\cite[Lemma~6.2(b)]{Pcta}.
\end{proof}

\begin{rem}
 A version of Lemma~\ref{a-contra-torsion-ext} with ``$\aaa$\+torsion
modules'' and ``$\aaa$\+contramodules'' replaced by ``$S$\+torsion
modules'' and ``$S$\+contramodules'', respectively, also holds for any
multiplicative subset $S$ in a commutative ring $R$ such that
$\operatorname{p.dim}_RR_S\le1$.
 This is provable by the same arguments as in~\cite[Lemma~6.2(b)]{Pcta}
(e.g., it can be deduced from Lemma~\ref{L:torsion-Hom} using
Lemma~\ref{S-contra}~(3--4)).
 However, our aim in this section is to \emph{prove} that
$\operatorname{p.dim}_RR_S\le1$ for $S$\+h-nil rings $R$, rather
than use this property.
 That is why we need $\aaa$\+contramodules here.
\end{rem}

\begin{lem} \label{S-contra-m-contra-local}
 Let $R$ be an $S$\+h-nil ring and\/ $\m$ a maximal ideal of $R$
intersecting~$S$.
 Then the following are equivalent for an $R$\+module~$P$:
\begin{enumerate}
\item $P$ is an\/ $\m$\+contramodule $R$\+module.
\item $P$ is an\/ $S$\+contramodule $R_\m$\+module.
\end{enumerate}
\end{lem}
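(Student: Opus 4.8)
The plan is to reduce the whole equivalence to the local ring $R_\m$, where a single element $s\in\m\cap S$ radically generates the maximal ideal, so that the $S$\+contramodule condition collapses to an $\m R_\m$\+contramodule condition. Fix $s\in\m\cap S$. The first step is to extract two facts from the $S$\+h-nil hypothesis via Lemma~\ref{L:S-h-nil}: for every $s'\in\m\cap S$ the ring $R_\m/s'R_\m\cong(R/s'R)_\m$ is local with nil maximal ideal, so that $\sqrt{s'R_\m}=\m R_\m$. Consequently each $s'\in\m\cap S$ has a power lying in $sR_\m$ and hence becomes invertible once $s$ is inverted, while the elements of $S\setminus\m$ are already units in $R_\m$; therefore $(R_\m)_S=R_\m[s^{-1}]$.

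The second step is a chain of equivalences, valid for any module $P$ that is already an $R_\m$\+module. By Lemma~\ref{S-contra}~(1) applied to the localization map $R\to R_\m$, such a $P$ is an $S$\+contramodule $R_\m$\+module if and only if it is an $S$\+contramodule $R$\+module, i.e. $\Ext^i_R(R_S,P)=0$ for $i=0,1$; by $(R_\m)_S=R_\m[s^{-1}]$ this amounts to $\Ext^i_{R_\m}(R_\m[s^{-1}],P)=0$, that is, $P$ is an $s$\+contramodule $R_\m$\+module. Now by Lemma~\ref{t-contra}~(3) the set of $\bar t\in R_\m$ for which $P$ is a $\bar t$\+contramodule is a radical ideal; it contains $\bar s$, hence it contains $\sqrt{sR_\m}=\m R_\m$, and in particular all the images $\bar x$ with $x\in\m$. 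Transferring each such $\bar x$\+contramodule condition back through Lemma~\ref{S-contra}~(1), this is precisely the statement that $P$ is an $\m$\+contramodule $R$\+module. Thus (1)$\iff$(2) holds for every module known in advance to be an $R_\m$\+module.

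It then remains to supply the $R_\m$\+module structure in the only direction where it is not part of the hypothesis. Since (2) presupposes that $P$ is an $R_\m$\+module, the implication (2)$\Rightarrow$(1) is immediate from the chain. For (1)$\Rightarrow$(2) I would first check that an $\m$\+contramodule $R$\+module $P$ is automatically an $R_\m$\+module: given $r\notin\m$, maximality of $\m$ yields $u\in R$ with $1-ur\in\m$; as $P$ is a $(1-ur)$\+contramodule, Lemma~\ref{t-contra}~(2) shows that $1-(1-ur)=ur$ acts invertibly on $P$, whence (by commutativity) $r$ itself acts invertibly. So every $r\notin\m$ is invertible on $P$, $P$ is an $R_\m$\+module, and the chain delivers (2).

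The main obstacle is exactly this passage to $R_\m$. Over $R$ itself the radical $\sqrt{sR}$ is the intersection of all the (finitely many) maximal ideals containing $s$, not $\m$ alone, so the $s$\+contramodule and $\m$\+contramodule conditions cannot be matched globally. Localizing at $\m$ is what isolates $\m$ among these maximal ideals and turns $s$ into a radical generator of $\m R_\m$; establishing $(R_\m)_S=R_\m[s^{-1}]$ and $\sqrt{sR_\m}=\m R_\m$ from the $S$\+h-nil property is the technical heart of the argument, everything else being a transfer of (contra)module properties along $R\to R_\m$ via Lemma~\ref{S-contra}~(1) and the radical-ideal principle of Lemma~\ref{t-contra}~(3).
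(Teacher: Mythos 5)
Your argument is correct in overall structure and takes a genuinely different route from the paper's, but it contains one reversed inference that you should repair. To conclude that each $s'\in\m\cap S$ becomes invertible in $R_\m[s^{-1}]$, you invoke ``$s'$ has a power lying in $sR_\m$''; that containment does not give invertibility --- compare $6\in 2\bbZ$, yet $6$ is not invertible in $\bbZ[1/2]$. What you need is the opposite divisibility: since $s\in\m R_\m=\sqrt{s'R_\m}$ (which your first sentence already establishes, as the radical computation applies to \emph{every} element of $\m\cap S$), some power $s^m$ lies in $s'R_\m$, so $s'$ divides a unit of $R_\m[s^{-1}]$ and is therefore invertible there. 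With this one-line correction, the identification $(R_\m)_S=R_\m[s^{-1}]$ and everything downstream of it stands.

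Concerning the comparison: for the hard direction (2)~$\Rightarrow$~(1) the paper argues quite differently. It represents $P\cong\Delta_{R_\m,S}(M)=\Ext^1_{R_\m}(K^\bullet_\m,M)$ via Lemma~\ref{S-contra-Delta}, observes that the cohomology modules of $K^\bullet_\m$ are $S$\+torsion and hence $\m$\+torsion (Lemma~\ref{S-torsion-m-torsion-local}), and then combines the fact that Ext modules out of $\m$\+torsion modules are $\m$\+contramodules (Lemma~\ref{a-contra-torsion-ext}~(1)) with the closure of $\m$\+contramodules under kernels, cokernels, and extensions (Lemma~\ref{t-contra}~(1)), applied to the long exact sequence of the triangle $H^{-1}(K^\bullet_\m)[1]\to K^\bullet_\m\to H^0(K^\bullet_\m)$. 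You bypass this $\Delta$/$K^\bullet$ apparatus entirely: the ring-theoretic identity $(R_\m)_S=R_\m[s^{-1}]$ reduces the $S$\+contramodule condition to the single-element $s$\+contramodule condition, and the radical-ideal principle of Lemma~\ref{t-contra}~(3), together with $\sqrt{sR_\m}=\m R_\m$, upgrades it to the $\m$\+contramodule condition. Your route is shorter and more transparent, at the price of letting the nontrivial imported result behind Lemma~\ref{t-contra}~(3) (Theorem~5.1 of~\cite{Pcta}) carry the entire weight, where the paper instead leans on the torsion/contramodule Ext machinery it develops for other purposes as well. The easy direction (1)~$\Rightarrow$~(2), including extracting the $R_\m$\+module structure from Lemma~\ref{t-contra}~(2), is essentially identical in the two treatments (the paper gets the $S$\+contramodule property from Lemma~\ref{S-contra}~(2) rather than from your localization identity, so its version of this direction needs no $S$\+h-nil hypothesis at all, but this difference is immaterial).
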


\begin{proof}
 (1) $\Rightarrow$ (2) holds without the $S$\+h-nil hypothesis.
 If $P$ is an $\m$\+contramodule and $s\in\m\cap S$, then $P$ is
an $s$\+contramodule, hence also an $S$\+contramodule by
Lemma~\ref{S-contra}~(2).
 Furthermore, for any commutative ring $R$ with a maximal ideal~$\m$,
any $\m$\+contramodule $R$\+module is an $R_\m$\+module
by Lemma~\ref{t-contra}~(2).
 In fact, any $R$\+module in which the elements $1-t$ act by
automorphisms for all $t\in\m$ is an $R_\m$\+module
(cf.~\cite[Remark~9.2]{Pcta}).

 (2) $\Rightarrow$ (1) For simplicity of notation, we will denote
the image of $S$ in $R_\m$ also by~$S$.
 Consider the two-term complex of $R_\m$\+modules
$K^\bullet_\m\cong K^\bullet_{R_\m,S}$
$$
 R_\m\stackrel{\phi_\m}\la (R_\m)_S,
$$
and recall the notation $\Ext_{R_\m}^n(K^\bullet_\m,M)=
\Hom_{\Der{\Modl{R_\m}}}(K^\bullet_\m,M[n])$ and
$\Delta_{R_\m,S}(M)=\Ext_{R_\m}^1(K^\bullet_\m,M)$ from
Section~\ref{S-h-local-secn}.

 Let $P$ be an $S$\+contramodule $R_\m$\+module
(the terminology ``$S$\+contra\-module $R_\m$\+module'' is
unambiguous by Lemma~\ref{S-contra}~(1)).
 According to Lemma~\ref{S-contra-Delta}, there exists
an $R_\m$\+module $M$ such that $P\cong\Delta_{R_\m,S}(M)$.
 The cohomology modules $H^{-1}(K^\bullet_\m)=I_\m$ and
$H^0(K^\bullet_\m)=(R_\m)_S/\phi_\m(R_\m)$ of the complex
$K^\bullet_\m$ are $S$\+torsion $R_\m$\+modules, so they are also
$R_\m\m$\+torsion $R_\m$\+modules by
Lemma~\ref{S-torsion-m-torsion-local}.
 We have a distinguished triangle in the derived category
$\Der{\Modl{R_\m}}$
$$
 H^{-1}(K^\bullet_\m)[1]\la K^\bullet_\m\la
 H^0(K^\bullet_\m)\la H^{-1}(K^\bullet_\m)[2],
$$
hence the induced long exact sequence of modules
$\Hom_{\Der{\Modl{R_\m}}}({-},M[*])$
\begin{multline*}
 0\la\Ext^1_{R_\m}(H^0(K^\bullet_\m),M)\la
 \Ext^1_{R_\m}(K^\bullet_\m,M) \\ \la
 \Hom_{R_\m}(H^{-1}(K^\bullet_\m),M)\la
 \Ext^2_{R_\m}(H^0(K^\bullet_\m),M)\la\dotsb
\end{multline*}
 Now the $R_\m$\+modules $\Ext^n_{R_\m}(H^i(K^\bullet_\m),M)$ are
$R_\m\m$\+contramodules for all $n\ge0$ and $i=-1$, $0$
by Lemma~\ref{a-contra-torsion-ext}~(1), and the full subcategory
of $R_\m\m$\+contramodule $R_\m$\+modules is closed under
the kernels, cokernels, and extensions in $\Modl{R_\m}$ by
Lemma~\ref{t-contra}~(1).
 Thus $P\cong\Delta_{R_\m,S}(M)=\Ext^1_{R_\m}(K^\bullet_\m,M)$
is also an $R_\m\m$\+contramodule $R_\m$\+module, hence
an $\m$\+contramodule $R$\+module.
\end{proof}

\begin{lem} \label{ext-between-products}
 Let $R$ be an $S$\+h-nil ring.
 Then the following hold true:
\begin{enumerate}
\item Let $M(\m)$ and $N(\m)$ be two collections of $R$\+modules
indexed by\/ $\m\in\Max R$, \,$\m\cap S\ne\varnothing$, such that
$M(\m)$ is\/ $\m$\+torsion and $N(\m)$ is an $R_\m$\+module for
every\/~$\m$.
 Then, for any $i\ge0$, any extension class in\/
$\Ext_R^i\bigl(\bigoplus_\m M(\m),\,\bigoplus_\m N(\m)\bigr)$ is
a direct sum over\/~$\m$ of some extension classes in\/
$\Ext_R^i(M(\m),N(\m))$, that is
$$
 \textstyle\Ext_R^i\bigl(\bigoplus_\m M(\m),\,\bigoplus_\m N(\m)\bigr)
 \,\cong\,\displaystyle\prod_\m\textstyle\Ext_R^i(M(\m),N(\m)).
$$
\item Let $P(\m)$ and $Q(\m)$ be two collections of $R$\+modules
indexed by\/ $\m\in\Max R$, \,$\m\cap S\ne\varnothing$, such that
$P(\m)$ is an $R_\m$\+module and $Q(\m)$ is an $\m$\+contramodule
for every\/~$\m$.
  Then, for any $i\ge0$, any extension class in\/
$\Ext_R^i\bigl(\prod_\m P(\m),\,\prod_\m Q(\m)\bigr)$ is
a product over\/~$\m$ of some extension classes in\/
$\Ext_R^i(P(\m),Q(\m))$, that is
$$
 \textstyle\Ext_R^i\bigl(\prod_\m P(\m),\,\prod_\m Q(\m)\bigr)
 \,\cong\,\displaystyle\prod_\m\textstyle\Ext_R^i(P(\m),Q(\m)).
$$
\end{enumerate}
\end{lem}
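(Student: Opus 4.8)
The two statements are dual to one another, and I would establish both by the same three-step scheme. The guiding principle is the elementary observation that \emph{if an $R$\+module $X$ is an $s$\+contramodule for some $s\in R$ and the element $s$ acts invertibly on $X$, then $X=0$}: indeed, $s$ acting invertibly makes $X$ an $R[s^{-1}]$\+module, for which $\Hom_R(R[s^{-1}],X)\cong X$ (evaluation at~$1$), while $\Hom_R(R[s^{-1}],X)=0$ by the definition of an $s$\+contramodule. Throughout I would use that an $S$\+h-nil ring is $S$\+h-local, that an $\m$\+torsion module is an $R_\m$\+module (as in Lemma~\ref{S-torsion-m-torsion-local}), and that an $\m$\+contramodule is an $R_\m$\+module by Lemma~\ref{t-contra}~(2).

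The first step is the \emph{cross-vanishing} of $\Ext$ between components sitting at distinct maximal ideals. For part~(1) and $\m\ne\n$, the group $\Ext_R^i(M(\m),N(\n))$ carries an $R_\m$\+module structure from the first argument and an $R_\n$\+module structure from the second, so it is a module over $R_\m\otimes_R R_\n$. Choosing $s\in\m\cap S$, Lemma~\ref{two-maximal-ideals} shows that $s$ is invertible in $R_\m\otimes_R R_\n$, hence acts invertibly on $\Ext_R^i(M(\m),N(\n))$; on the other hand this group is an $\m$\+contramodule by Lemma~\ref{a-contra-torsion-ext}~(1), in particular an $s$\+contramodule. By the observation above, $\Ext_R^i(M(\m),N(\n))=0$. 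The argument for part~(2) is symmetric: $\Ext_R^i(P(\m),Q(\n))$ is again an $R_\m\otimes_R R_\n$\+module, it is an $\n$\+contramodule by Lemma~\ref{a-contra-torsion-ext}~(2), and picking $s\in\n\cap S$ makes $s$ act invertibly, forcing the group to vanish.

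The second step reduces the mixed $\Ext$ to the diagonal. For part~(1), since $\Ext$ out of a direct sum is the product of the individual $\Ext$ groups, it suffices to compute $\Ext_R^i(M(\m),\bigoplus_\n N(\n))$ for each fixed~$\m$. Fix $s\in\m\cap S$. Because $R$ is $S$\+h-nil, only finitely many maximal ideals contain~$s$, so
$$
 \bigoplus_\n N(\n)\;=\;\Bigl(\bigoplus_{s\in\n}N(\n)\Bigr)\oplus
 \Bigl(\bigoplus_{s\notin\n}N(\n)\Bigr),
$$
where the first summand is finite and contains $N(\m)$, while $s$ acts invertibly on the second summand~$B$ (each $N(\n)$ with $s\notin\n$ being an $R_\n$\+module on which $s$ is a unit). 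Splitting $\Ext$ accordingly, the finite summand yields $\bigoplus_{s\in\n}\Ext_R^i(M(\m),N(\n))=\Ext_R^i(M(\m),N(\m))$ by the cross-vanishing, whereas $\Ext_R^i(M(\m),B)$ is an $s$\+contramodule by Lemma~\ref{a-contra-torsion-ext}~(1) on which $s$ acts invertibly, hence zero. This gives $\Ext_R^i(M(\m),\bigoplus_\n N(\n))\cong\Ext_R^i(M(\m),N(\m))$, and thus the isomorphism of part~(1). Part~(2) is handled dually, using that $\Ext$ into a product is the product of the $\Ext$ groups, fixing $\n$ and $s\in\n\cap S$, splitting $\prod_\m P(\m)$ into the finite product over $\{\m : s\in\m\}$ (which contains $P(\n)$) and the factor $\prod_{s\notin\m}P(\m)$ on which $s$ is invertible, and applying the cross-vanishing together with Lemma~\ref{a-contra-torsion-ext}~(2).

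The main obstacle is precisely that $\Ext$ does \emph{not} commute with infinite direct sums in the second variable (for part~(1)) nor with infinite products in the first variable (for part~(2)), so a naive term-by-term computation is unavailable. This is exactly where the $S$\+h-nil hypothesis is indispensable: the finiteness of the set of maximal ideals containing a chosen $s\in S$ lets me isolate a \emph{finite} piece, over which $\Ext$ does distribute and the cross-vanishing applies, from a complementary piece on which $s$ acts invertibly and the contramodule vanishing annihilates the $\Ext$. The only routine point underlying every vanishing step is the compatibility of the two module structures on the $\Ext$ groups, so that invertibility of $s$ on one argument really forces invertibility of $s$ on the whole group; this is standard and I would invoke it without further comment.
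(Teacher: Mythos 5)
Your proof is correct, and its skeleton is the paper's: commute $\Ext$ past the direct sum in the first variable (resp.\ the product in the second variable), then split the remaining infinite sum (resp.\ product) at a chosen $s\in S$ into the finite part indexed by the maximal ideals containing~$s$ and a complementary part on which $s$ acts invertibly. The differences lie in the vanishing mechanisms, and they are genuine though minor. For part~(1) the paper never mentions contramodules: both the complementary part and the finitely many cross-terms are killed by the localization isomorphism $\Ext^i_R(M,N)\cong\Ext^i_{R[t^{-1}]}(R[t^{-1}]\otimes_RM,\,N)=0$, valid when $N$ is an $R[t^{-1}]$\+module and $M$ is $t$\+torsion; you instead run in part~(1) the same contramodule argument (Lemma~\ref{a-contra-torsion-ext} together with your observation that an $s$\+contramodule with invertible $s$\+action vanishes) that the paper reserves for part~(2), which makes your treatment of the two halves uniform. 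Second, for the cross-terms $\Ext^i_R(M(\m),N(\n))$ with $\m\ne\n$, the paper simply picks $r\in\m\setminus\n$ (resp.\ $r\in\n\setminus\m$ in part~(2)), so that step needs nothing beyond the maximality of the two ideals; you pick $s\in\m\cap S$ and route the invertibility through Lemma~\ref{two-maximal-ideals} and the $R_\m\otimes_RR_\n$\+module structure on the $\Ext$ group, which is sound (the $S$\+h-nil hypothesis guarantees the hypothesis of that lemma) but passes an elementary step through the h-local machinery. Both versions are complete; your argument has no gap.
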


\begin{proof}
 (1) Notice that the case $i=0$ is covered by
Lemma~\ref{sum-decomp-unique}.
 The following argument is applicable for all $i\ge0$.

 It suffices to show that
$\Ext_R^i\bigl(M(\m),\bigoplus_{\n\ne\m}N(\n)\bigr)=0$,
where the direct sum of $R_\n$\+modules $N(\n)$ runs over
the maximal ideals~$\n$ of $R$ different from~$\m$, and $\m$~is
a maximal ideal intersecting~$S$.
 Indeed, let $s$ be an element of the intersection $\m\cap S$.
 Then the action of~$s$ in the direct sum $\bigoplus_{s\notin\n}N(\n)$
is invertible, while $M(\m)$ is $s$\+torsion.

 For any element $t\in R$, and any $R$\+modules $M$, $N$ such that
the action of~$t$ is invertible in $N$, while $M$ is $t$\+torsion,
we have $\Ext^i_R(M,N)=\Ext^i_{R[t^{-1}]}(R[t^{-1}]\otimes_R M,\>N)=0$.
 Hence $\Ext^i_R\bigl(M(\m),\,\bigoplus_{s\notin\n}N(\n)\bigr)=0$.

 The set of all maximal ideals~$\n$ of $R$ containing~$s$ is finite, so
it remains to show that $\Ext_R^i(M(\m),N(\n))=0$ for any fixed two
maximal ideals $\m\ne\n$ of~$R$.
 Indeed, let $r\in\m\setminus\n$ be an element of the complement.
 Then the action of~$r$ is invertible in $N(\n)$, while $M(\m)$ is
$r$\+torsion.
 For the same reason as above, the Ext module in question vanishes.

 (2) The case $i=0$ is covered by Lemma~\ref{product-decomp-unique}.
 The following argument is applicable for all $i\ge0$.

 It suffices to show that
$\Ext_R^i\bigl(\prod_{\m\ne\n}P(\m),\,Q(\n)\bigr)=0$,
where the product of $R_\m$\+modules $P(\m)$ runs over
the maximal ideals~$\m$ of $R$ different from~$\n$, and
$\n$~is a maximal ideal intersecting~$S$.
 Choose an element $s\in\n\cap S$.
 Then the action of~$s$ in the product $\prod_{s\notin\m}P(\m)$
is invertible, while $Q(\n)$ is an $s$\+contramodule.

 For any element $t\in R$, and any $R$\+modules $P$, $Q$ such that
the action of~$t$ is invertible in $P$, while $Q$ is a $t$\+contramodule,
we have $\Ext_R^i(P,Q)=0$.
 Indeed, the $R$\+module $E=\Ext_R^i(P,Q)$ is a $t$\+contramodule
by Lemma~\ref{a-contra-torsion-ext}~(2), and at the same time $t$~acts
invertibly in $E$, so it follows that $E=0$.

 Hence, in particular,
$\Ext_R^i\bigl(\prod_{s\notin\m}P(\m),\,Q(\n)\bigr)=0$.
 The set of all maximal ideals~$\m$ of $R$ containing~$s$ is finite,
so it remains to show that $\Ext_R^i(P(\m),Q(\n))=0$ for any fixed
two maximal ideals $\m\ne\n$.
 Let $r\in\n\setminus\m$ be an element of the complement.
 Then the action of~$r$ is invertible in $P(\m)$, while $Q(\n)$ is
an $r$\+contramodule.
 Therefore, the Ext module in question vanishes.
\end{proof}

\begin{cor} \label{product-decomp-closed-under}
 Let $R$ be an $S$\+h-nil ring.
 Then the full subcategory of all $R$\+modules of the form\/
$\prod\limits_{\m\in\Max R;\,\m\cap S\ne\varnothing}P(\m)$, where
$P(\m)$ are\/ $\m$\+contramodule $R$\+modules, is closed under
the kernels, cokernels, extensions, and infinite products in $\Modl R$.
\end{cor}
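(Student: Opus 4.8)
The plan is to verify each of the four closure properties factorwise, processing one maximal ideal $\m$ at a time and then reassembling via the direct product. Two facts drive the argument. First, by Lemma~\ref{t-contra}~(2) (as used in the proof of Lemma~\ref{S-contra-m-contra-local}), every $\m$\+contramodule $R$\+module is automatically an $R_\m$\+module; hence all the modules appearing below satisfy the hypotheses of Lemma~\ref{ext-between-products}~(2), which computes $\Ext_R^i\bigl(\prod_\m P(\m),\,\prod_\m Q(\m)\bigr)\cong\prod_\m\Ext_R^i(P(\m),Q(\m))$ for $\m$\+contramodules $P(\m)$, $Q(\m)$. Second, the full subcategory of $\m$\+contramodule $R$\+modules is itself closed under kernels, cokernels, extensions, and products, by Lemma~\ref{t-contra}~(1) applied to each $t\in\m$. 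I will also use that direct products are exact in $\Modl R$.

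For the kernel and cokernel assertions, I would take a morphism $f\colon\prod_\m P(\m)\to\prod_\m Q(\m)$ between two modules of the prescribed form. By the case $i=0$ of Lemma~\ref{ext-between-products}~(2) (equivalently, Lemma~\ref{product-decomp-unique}), $f$ decomposes as a product $\prod_\m f_\m$ of $R_\m$\+module maps $f_\m\colon P(\m)\to Q(\m)$. Since products commute with kernels, $\Ker f\cong\prod_\m\Ker f_\m$, and since products are exact in $\Modl R$, also $\Coker f\cong\prod_\m\Coker f_\m$. Each factor $\Ker f_\m$ and $\Coker f_\m$ is an $\m$\+contramodule, so both $\Ker f$ and $\Coker f$ again belong to the class.

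For extensions, I would consider a short exact sequence $0\to\prod_\m P(\m)\to B\to\prod_\m R(\m)\to 0$ with both end terms in the class. Its extension class lies in $\Ext_R^1\bigl(\prod_\m R(\m),\,\prod_\m P(\m)\bigr)$, which by Lemma~\ref{ext-between-products}~(2) is the product of the groups $\Ext_R^1(R(\m),P(\m))$; thus the extension is a product over~$\m$ of extensions $0\to P(\m)\to B(\m)\to R(\m)\to 0$. Exactness of products then gives $B\cong\prod_\m B(\m)$, and each $B(\m)$ is an $\m$\+contramodule by closure under extensions, so $B$ lies in the class. The closure under infinite products is the most formal point: for a family $\bigl(\prod_\m P_j(\m)\bigr)_{j\in J}$ of modules in the class, the interchange of products $\prod_{j\in J}\prod_\m P_j(\m)\cong\prod_\m\bigl(\prod_{j\in J}P_j(\m)\bigr)$ exhibits it in the required form, since each inner product $\prod_{j\in J}P_j(\m)$ is an $\m$\+contramodule.

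The one step that carries genuine content is the extension case, and this is precisely where the $S$\+h-nil hypothesis enters through Lemma~\ref{ext-between-products}: a priori there is no reason for the middle term $B$ of an extension of $\prod_\m R(\m)$ by $\prod_\m P(\m)$ to decompose as a product indexed by the maximal ideals, and it is the vanishing $\Ext_R^i(P(\m),Q(\n))=0$ for $\m\ne\n$ (which rests on the finiteness of $\{\m\mid s\in\m\}$ and the invertibility of suitable elements modulo distinct maximal ideals) that forces this splitting. Once this is granted, the kernel, cokernel, and product statements are formal consequences of the morphism decomposition together with the exactness of direct products in $\Modl R$.
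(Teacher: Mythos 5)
Your proposal is correct and follows essentially the same route as the paper, whose proof simply cites Lemma~\ref{ext-between-products}~(2) for $i=0$ (giving the morphism decomposition, hence kernels and cokernels) and $i=1$ (giving the extension decomposition), combined with the factorwise closure properties of $\m$\+contramodules from Lemma~\ref{t-contra}~(1) and the formal reindexing of products. You have merely spelled out the details that the paper leaves implicit, including the correct observation that $\m$\+contramodules are $R_\m$\+modules so that the hypotheses of Lemma~\ref{ext-between-products}~(2) are met.
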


\begin{proof}
 Follows from Lemma~\ref{ext-between-products}~(2) for $i=0$ and~$1$.
\end{proof}

\begin{lem} \label{ext-from-S-torsion-decomp}
 Let $R$ be an $S$\+h-nil ring.
 Then for any $S$\+torsion $R$\+module $N$, any $R$\+module $X$,
and any $n\ge0$, there exists a collection of\/ $\m$\+contramodule
$R$\+modules $P(\m)$, one for each\/ $\m\in\Max R$,
\,$\m\cap S\ne\varnothing$, such that the $R$\+module\/ $\Ext^n_R(N,X)$
is isomorphic to\/ $\prod_\m P(\m)$.
\end{lem}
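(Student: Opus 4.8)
The plan is to combine the direct sum decomposition of $S$\+torsion modules with the fact that $\Ext$ out of a torsion module lands in contramodules; both ingredients are already available as lemmas, so the argument is essentially a bookkeeping of indices.

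First I would apply Lemma~\ref{S-torsion-m-torsion-direct-sum-decomp} to the $S$\+torsion module $N$: since $R$ is $S$\+h-nil, this yields a direct sum decomposition $N\cong\bigoplus_\m N(\m)$, indexed over $\m\in\Max R$ with $\m\cap S\ne\varnothing$, in which each summand $N(\m)$ is an $\m$\+torsion $R$\+module. Applying the functor $\Ext_R^n(-,X)$ and using the standard isomorphism turning direct sums in the first argument into products, I obtain
$$
 \Ext_R^n(N,X)\cong\Ext_R^n\Bigl(\textstyle\bigoplus_\m N(\m),\,X\Bigr)\cong\prod_\m\Ext_R^n(N(\m),X).
$$
It then remains to set $P(\m)=\Ext_R^n(N(\m),X)$ and to verify that each $P(\m)$ is an $\m$\+contramodule. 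This is exactly Lemma~\ref{a-contra-torsion-ext}~(1) applied with the ideal $\aaa=\m$: since $N(\m)$ is $\m$\+torsion, all of its $\Ext$\+modules $\Ext_R^n(N(\m),X)$ are $\m$\+contramodules for every $n\ge0$. This exhibits $\Ext_R^n(N,X)$ as the required product $\prod_\m P(\m)$, with the index set $\{\m\in\Max R\mid\m\cap S\ne\varnothing\}$ matching the one in the direct sum decomposition by construction.

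There is essentially no serious obstacle here, as both main ingredients are available as previously proved results; the only routine point to record is the isomorphism $\Ext_R^n(\bigoplus_i A_i,B)\cong\prod_i\Ext_R^n(A_i,B)$, which follows from the analogous property of $\Hom$ together with the fact that a direct sum of projective resolutions of the $A_i$ resolves $\bigoplus_i A_i$. If one wished to avoid even invoking this isomorphism explicitly, one could instead observe that $\bigoplus_\m N(\m)$ is filtered by its finite partial sums and run the same decomposition through the functoriality already established in Lemma~\ref{ext-between-products}~(1), but the direct product formula for $\Ext$ out of a coproduct is the cleanest route.
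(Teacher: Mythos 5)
Your proof is correct and is essentially identical to the paper's own argument: decompose $N\cong\bigoplus_\m N(\m)$ via Lemma~\ref{S-torsion-m-torsion-direct-sum-decomp}, set $P(\m)=\Ext^n_R(N(\m),X)$, and apply Lemma~\ref{a-contra-torsion-ext}~(1), with the standard isomorphism $\Ext^n_R(\bigoplus_\m N(\m),X)\cong\prod_\m\Ext^n_R(N(\m),X)$ left implicit in the paper but correctly spelled out by you.
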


\begin{proof}
 By Lemma~\ref{S-torsion-m-torsion-direct-sum-decomp}, we have
$N\cong\bigoplus_\m N(\m)$, where $N(\m)$ are some $\m$\+torsion
$R$\+modules.
 It remains to set $P(\m)=\Ext^n_R(N(\m),X)$ and apply
Lemma~\ref{a-contra-torsion-ext}~(1). 
\end{proof}

\begin{lem} \label{ext-from-K-decomp}
 Let $R$ be an $S$\+h-nil ring and $K^\bullet$ the two-term complex
$R\stackrel\phi\to R_S$.
 Then, for any $R$\+module $M$ and any $n\ge0$, there exists a collection
of\/ $\m$\+contramodule $R$\+modules $P(\m)$, one for each\/
$\m\in\Max R$, \,$\m\cap S\ne\varnothing$, such that the $R$\+module\/
$\Ext^n_R(K^\bullet,M)$ is isomorphic to\/ $\prod_\m P(\m)$.
\end{lem}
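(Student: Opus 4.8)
The plan is to reduce the computation of $\Ext_R^n(K^\bullet,M)$ to the $\Ext$ groups of the two cohomology modules of $K^\bullet$ and then to invoke the closure properties already established. Recall that the cohomology of the complex $K^\bullet=(R\stackrel\phi\to R_S)$ is $H^{-1}(K^\bullet)=I$ and $H^0(K^\bullet)=R_S/\phi(R)$, and that both of these $R$\+modules are $S$\+torsion. As in the proof of Lemma~\ref{S-contra-m-contra-local}, I would work with the canonical truncation distinguished triangle in $\Der{\Modl R}$
\[
 H^{-1}(K^\bullet)[1]\la K^\bullet\la H^0(K^\bullet)\la H^{-1}(K^\bullet)[2].
\]

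Applying the cohomological functor $\Hom_{\Der{\Modl R}}(-,M[*])$ to this triangle yields a long exact sequence of $R$\+modules in which, apart from the groups $\Ext_R^n(K^\bullet,M)$ themselves, every term is of the form $\Ext_R^j(I,M)$ or $\Ext_R^j(R_S/\phi(R),M)$ for some $j\ge0$. Since $I$ and $R_S/\phi(R)$ are $S$\+torsion, Lemma~\ref{ext-from-S-torsion-decomp} shows that each of these terms belongs to the full subcategory $\C\subseteq\Modl R$ of modules of the form $\prod_\m P(\m)$ with $\m$\+contramodule factors considered in Corollary~\ref{product-decomp-closed-under}. (For small~$n$ some of the relevant groups vanish, which is harmless since $0\in\C$.)

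It then remains to read off $\Ext_R^n(K^\bullet,M)$ from this long exact sequence. The sequence exhibits $\Ext_R^n(K^\bullet,M)$ as an extension
\[
 0\la\Coker\alpha\la\Ext_R^n(K^\bullet,M)\la\Ker\delta\la0,
\]
where $\alpha\colon\Ext_R^{n-2}(I,M)\to\Ext_R^n(R_S/\phi(R),M)$ and $\delta\colon\Ext_R^{n-1}(I,M)\to\Ext_R^{n+1}(R_S/\phi(R),M)$ are the appropriate connecting homomorphisms. By Corollary~\ref{product-decomp-closed-under}, $\C$ is closed under kernels and cokernels, so $\Coker\alpha$ and $\Ker\delta$ both lie in $\C$; and since $\C$ is closed under extensions, so is $\Ext_R^n(K^\bullet,M)$. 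Unfolding the definition of $\C$ then gives the asserted isomorphism $\Ext_R^n(K^\bullet,M)\cong\prod_\m P(\m)$ with $\m$\+contramodule factors~$P(\m)$.

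The substance of the argument lies entirely in the two inputs Lemma~\ref{ext-from-S-torsion-decomp} and Corollary~\ref{product-decomp-closed-under}; what remains is bookkeeping. The main point requiring care is the correct indexing of the long exact sequence, so that the connecting maps $\alpha,\delta$ are identified as honest $R$\+module homomorphisms between objects of $\C$ (which is what licenses the appeal to closure under kernels and cokernels), together with a check of the degenerate low-degree cases $n=0,1$, where the incoming $\Ext$ groups from $I$ vanish. I do not anticipate any genuine difficulty beyond this.
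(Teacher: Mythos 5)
Your proposal is correct and follows essentially the same route as the paper's own proof: the canonical truncation triangle $H^{-1}(K^\bullet)[1]\to K^\bullet\to H^0(K^\bullet)\to H^{-1}(K^\bullet)[2]$, the induced long exact sequence of $\Ext_R^*({-},M)$, the observation that $I$ and $R_S/\phi(R)$ are $S$\+torsion so Lemma~\ref{ext-from-S-torsion-decomp} applies to their $\Ext$ modules, and finally Corollary~\ref{product-decomp-closed-under} to conclude via closure under kernels, cokernels, and extensions. Your explicit extraction of the extension $0\to\Coker\alpha\to\Ext_R^n(K^\bullet,M)\to\Ker\delta\to0$ is just a slightly more detailed write-up of exactly the bookkeeping the paper leaves implicit.
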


\begin{proof}
 The cohomology modules $H^{-1}(K^\bullet)=I$ and
$H^0(K^\bullet)=R_S/\phi(R)$ of the complex $K^\bullet$ are
$S$\+torsion, since the complex $R_S\otimes_R K^\bullet$ is acyclic.
 We have a distinguished triangle in the derived category $\Der{\Modl R}$
$$
 H^{-1}(K^\bullet)[1]\la K^\bullet\la H^0(K^\bullet)\la
 H^{-1}(K^\bullet)[2],
$$
hence the induced long exact sequence of modules
$\Hom_{\Der{\Modl R}}({-},M[*])$
\begin{multline*}
 \dotsb\la\Ext^{n-2}(H^{-1}(K^\bullet),M)
 \la\Ext^n_R(H^0(K^\bullet),M) \\
 \la\Ext^n_R(K^\bullet,M)\la\\
 \Ext_R^{n-1}(H^{-1}(K^\bullet),M)\la
 \Ext^{n+1}(H^0(K^\bullet),M)\la\dotsb 
\end{multline*}
 Now the $R$\+modules $\Ext_R^n(H^i(K^\bullet),M)$ are products
of $\m$\+contramodule $R$\+modules over $\m\in\Max R$,
\,$\m\cap S\ne\varnothing$ for all $n\ge0$ and $i=0$, $-1$
by Lemma~\ref{ext-from-S-torsion-decomp}; and so are all the extensions of
the kernels and cokernels of morphisms between such $R$\+modules, by
Corollary~\ref{product-decomp-closed-under}.
\end{proof}

\begin{rem}
 It follows from Lemma~\ref{ext-between-products}~(1) for $i=2$
together with Lemma~\ref{S-torsion-m-torsion-direct-sum-decomp} that,
for any $S$\+h-nil ring $R$, the complex $K^\bullet=K^\bullet_{R,S}$,
as an object of the derived category $\Der{\Modl R}$, is isomorphic to
the direct sum
$\bigoplus\limits_{\m\in\Max R;\,\m\cap S\ne\varnothing}K^\bullet_\m$.
 In fact, for any two-term complex of $R$\+modules $L^\bullet$ with
$S$\+torsion cohomology modules, there is a natural isomorphism
$L^\bullet\cong\bigoplus\limits_{\m\in\Max R;\,\m\cap S\ne\varnothing}
L^\bullet_\m$ in $\Der{\Modl R}$.
 This observation provides another way to prove
Lemma~\ref{ext-from-K-decomp}.
\end{rem}

 Having finished all the preparatory work, we can now deduce the main
results of this section.

\begin{thm} \label{T:S-h-nil-is-Matlis}
 For any $S$\+h-nil commutative ring $R$, the projective dimension
of the $R$\+module $R_S$ does not exceed\/~$1$.
\end{thm}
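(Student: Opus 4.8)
The plan is to reduce the statement to the vanishing $\Ext^2_R(R_S,M)=0$ for every $R$\+module $M$. This already suffices: choosing a short exact sequence $0\to\Omega\to P\to R_S\to0$ with $P$ projective, one gets $\Ext^1_R(\Omega,M)\cong\Ext^2_R(R_S,M)=0$ for all $M$, so $\Omega$ is projective and $\operatorname{p.dim}_R R_S\le1$.

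The heart of the argument is to look at $\Ext^2_R(R_S,M)$ from two opposite directions. First, for each $s\in S$ the multiplication map $s\colon R_S\to R_S$ is an isomorphism of $R$\+modules, so by functoriality of $\Ext$ it acts invertibly on $\Ext^2_R(R_S,M)$; hence $\Ext^2_R(R_S,M)$ is naturally an $R_S$\+module. Second, since $2>1$, Lemma~\ref{ext-from-K}~(3) gives $\Ext^2_R(R_S,M)\cong\Ext^2_R(K^\bullet,M)$, and Lemma~\ref{ext-from-K-decomp} presents the right-hand side as a product $\prod_{\m}P(\m)$ of $\m$\+contramodule $R$\+modules, taken over those $\m\in\Max R$ with $\m\cap S\neq\varnothing$. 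Each $P(\m)$ is an $S$\+contramodule by Lemma~\ref{S-contra-m-contra-local}, in particular $S$\+h-reduced; and since $\Hom_R(R_S,-)$ commutes with products, the whole product, and therefore $\Ext^2_R(R_S,M)$ itself, is $S$\+h-reduced.

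To finish I would invoke the elementary fact that an $R$\+module $N$ that is simultaneously an $R_S$\+module and $S$\+h-reduced must vanish: evaluation at $1$ identifies $\Hom_R(R_S,N)$ with $N$ for any $R_S$\+module $N$ (because $R\to R_S$ is a ring epimorphism), whereas being $S$\+h-reduced means precisely $\Hom_R(R_S,N)=0$. Applying this to $N=\Ext^2_R(R_S,M)$ yields the required vanishing for every $M$. The one delicate input is the $S$\+h-reducedness furnished by Lemma~\ref{ext-from-K-decomp}, which is exactly where the $S$\+h-nil hypothesis enters; I would stress that this lemma is proved \emph{without} assuming $\operatorname{p.dim}_R R_S\le1$ (this is the reason $\m$\+contramodules, rather than $S$\+contramodules, are used in its proof), so the reasoning is not circular. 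Thus the only genuine obstacle, namely showing that the higher Ext modules out of $K^\bullet$ decompose into contramodules, has already been dispatched in the preparatory lemmas, and the theorem drops out of the clash between the $R_S$\+module structure and the $S$\+h-reducedness of $\Ext^2_R(R_S,M)$.
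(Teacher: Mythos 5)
Your proposal is correct and follows essentially the same route as the paper: both rest on the isomorphism of Lemma~\ref{ext-from-K}~(3) and the product decomposition of Lemma~\ref{ext-from-K-decomp}, and both conclude by playing a contramodule property off against the induced $R_S$-module structure on the Ext module. The only cosmetic differences are that you reduce to the case $n=2$ by a syzygy shift (the paper treats all $n\ge2$ at once, which is equivalent), and that you obtain the final contradiction from the $S$-h-reducedness of the whole product $\prod_\m P(\m)$, whereas the paper notes that each factor $P(\m)$ is a direct summand of an $R_S$-module and hence simultaneously an $s$-contramodule and an $R[s^{-1}]$-module, forcing $P(\m)=0$.
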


\begin{proof}
 Let $M$ be an $R$\+module and $n\ge2$ be an integer.
 By Lemma~\ref{ext-from-K}~(3), we have an isomorphism
$\Ext_R^n(K^\bullet,M)\cong\Ext_R^n(R_S,M)$.
 By Lemma~\ref{ext-from-K-decomp}, the $R$\+module
$\Ext^n_R(K^\bullet,M)$ is isomorphic to an infinite product of
the form $\prod\limits_{\m\in\Max R;\,\m\cap S\ne\varnothing}P(\m)$
for some $\m$\+contramodule $R$\+modules $P(\m)$.
 On the other hand, $\Ext_R^n(R_S,M)$ is an $R_S$\+module.

 It follows that $P(\m)$, being a direct summand of
$\Ext_R^n(K^\bullet,M)$, is an $R_S$\+module for every~$\m$.
 So, if $s$~is an element of the intersection $\m\cap S$, then $P(\m)$
is simultaneously an $s$\+contramodule $R$\+module and
an $R[s^{-1}]$\+module.
 Thus $P(\m)=0$ and $\Ext_R^n(R_S,M)=0$.
\end{proof}

\begin{cor} \label{S-h-nil-S-h-divisible}
 Let $R$ be an $S$\+h-nil ring.
 Then every $S$\+divisible $R$\+module is $S$\+h-divisible.
\end{cor}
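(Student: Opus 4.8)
The plan is to derive this corollary as an immediate consequence of Theorem~\ref{T:S-h-nil-is-Matlis} combined with the preliminary Proposition~\ref{P:Matlis}~(2), so that essentially no new work is required. The key observation is that Proposition~\ref{P:Matlis}~(2) already establishes exactly the desired implication — ``every $S$\+divisible $R$\+module is $S$\+h-divisible'' — under the single hypothesis that $\operatorname{p.dim}_R R_S\leq 1$. Hence the only thing to check is that this hypothesis is satisfied in the present situation.

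First I would invoke Theorem~\ref{T:S-h-nil-is-Matlis}: since $R$ is assumed to be $S$\+h-nil, that theorem applies and tells us that $\operatorname{p.dim}_R R_S\leq 1$. This is precisely the assumption appearing in Proposition~\ref{P:Matlis}~(2).

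Finally I would feed this into Proposition~\ref{P:Matlis}~(2), whose conclusion then gives directly that every $S$\+divisible $R$\+module is $S$\+h-divisible, completing the proof. There is no genuine obstacle remaining at this stage: all the substantive difficulty — namely bounding the projective dimension of $R_S$ via the product decomposition of $\Ext^n_R(K^\bullet,M)$ into $\m$\+contramodules in Lemma~\ref{ext-from-K-decomp} and the vanishing argument of Theorem~\ref{T:S-h-nil-is-Matlis} — has already been overcome in establishing the theorem. The corollary is thus a one-line deduction chaining these two results together.
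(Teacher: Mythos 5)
Your proposal is correct and is precisely the paper's own (primary) argument: the paper proves this corollary by noting it ``follows immediately from Theorem~\ref{T:S-h-nil-is-Matlis} and Proposition~\ref{P:Matlis}'' --- exactly the two-step deduction you describe. (The paper also sketches an alternative self-contained argument via the exact sequence~($\ast\ast$) and the $\m$\+contramodule product decomposition, but that is offered only as a supplement to the one-line proof you gave.)
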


\begin{proof}
 This follows immediately from Theorem~\ref{T:S-h-nil-is-Matlis}
and Proposition~\ref{P:Matlis}.
 Here is an alternative argument, based on the techniques developed
in this and the previous section.
 For any $R$\+module $M$, we have the exact sequence of
$R$\+modules~($\ast\ast$) from Section~\ref{S-h-local-secn}:
\begin{multline*}
 0\la\Ext^0_R(K^\bullet,M)\la\Hom_R(R_S,M) \la M \\
 \la \Ext^1_R(K^\bullet,M)\la\Ext^1_R(R_S,M)\la0.
\end{multline*}
 By Lemma~\ref{ext-from-K-decomp}, the $R$\+module
$\Ext_R^1(K^\bullet,M)$ can be presented as a product of some
$\m$\+contramodule $R$\+modules $P(\m)$ over all
$\m\in\Max R$, \,$\m\cap S\ne\varnothing$.

 Now assume that $M$ is $S$\+divisible.
 Let $s\in\m\cap S$ be an element of the intersection.
 Then $M$ is an $s$\+divisible module, while $\Hom_R(R[s^{-1}],P(\m))=0$
implies that $P(\m)$ has no $s$\+divisible submodules.
 Thus any morphism $M\to P(\m)$ vanishes, the morphism
$M\to\Ext^1_R(K^\bullet,M)$ vanishes, and the map
$\Hom_R(R_S,M)\to M$ is surjective.  
\end{proof}

\begin{cor} \label{S-contra-m-contra-product-decomp}
 Let $R$ be an $S$\+h-nil ring and $P$ an $R$\+module.
 Then the following are equivalent:
\begin{enumerate}
\item $P$ is an $S$\+contramodule.
\item There exists a collection of\/ $\m$\+contramodule $R$\+modules
$P(\m)$, one for each\/ $\m\in\Max R$, \,$\m\cap S\ne\varnothing$,
such that $P$ is isomorphic to\/ $\prod_\m P(\m)$.
\end{enumerate}
 The direct product decomposition in~{\rm(2)} is unique if it exists.
\end{cor}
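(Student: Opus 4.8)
The plan is to establish the two implications separately and then settle uniqueness, relying on the decomposition results already assembled in this and the previous section.

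For the implication (2)~$\Rightarrow$~(1) I would first observe that every $\m$\+contramodule is already an $S$\+contramodule $R$\+module: choosing $s\in\m\cap S$, an $\m$\+contramodule is in particular an $s$\+contramodule, hence an $S$\+contramodule by Lemma~\ref{S-contra}~(2) (applied to the inclusion $\{1,s,s^2,\dotsc\}\subset S$ of multiplicative subsets). Since the full subcategory of $S$\+contramodule $R$\+modules is closed under infinite products in $\Modl R$ by Lemma~\ref{S-contra}~(3), any product $\prod_\m P(\m)$ with each $P(\m)$ an $\m$\+contramodule is an $S$\+contramodule, and so is anything isomorphic to it.

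The implication (1)~$\Rightarrow$~(2) is where the heavy lemmas enter. Given an $S$\+contramodule $P$, I would use that $P$ is $S$\+h-reduced to conclude, via (the proof of) Lemma~\ref{S-contra-Delta}, that the canonical map $\delta_P\colon P\to\Delta(P)=\Ext^1_R(K^\bullet,P)$ is an isomorphism, so that $P\cong\Ext^1_R(K^\bullet,P)$. Then I would apply Lemma~\ref{ext-from-K-decomp} with $M=P$ and $n=1$, which produces a collection of $\m$\+contramodule $R$\+modules $P(\m)$, indexed over $\m\in\Max R$ with $\m\cap S\ne\varnothing$, together with an isomorphism $\Ext^1_R(K^\bullet,P)\cong\prod_\m P(\m)$. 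Composing the two isomorphisms yields the desired decomposition $P\cong\prod_\m P(\m)$.

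For uniqueness I would invoke Lemma~\ref{product-decomp-unique}, which applies since an $S$\+h-nil ring is in particular $S$\+h-local. Each $\m$\+contramodule occurring in a decomposition is an $R_\m$\+module (by Lemma~\ref{t-contra}~(2), as the elements $1-t$ act invertibly for $t\in\m$) and is $S$\+h-reduced (being an $S$\+contramodule). Hence, given two such decompositions $\prod_\m P(\m)\cong\prod_\m P'(\m)$, both the comparison isomorphism and its inverse are forced to be products of $R_\m$\+module morphisms $P(\m)\to P'(\m)$, and comparing components shows each component is an isomorphism. The argument is short because the substance has been front-loaded into Lemma~\ref{ext-from-K-decomp} and the local-to-global comparison lemmas; the only steps needing genuine care are the matching of the abstract product decomposition of $\Ext^1_R(K^\bullet,P)$ with $P$ \emph{itself} (which works precisely because one may take the coefficient module in Lemma~\ref{ext-from-K-decomp} to be $P$ and $\delta_P$ is then invertible), and the verification that $\m$\+contramodules are simultaneously $R_\m$\+modules and $S$\+h-reduced, so that Lemma~\ref{product-decomp-unique} is available.
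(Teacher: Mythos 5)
Your proof is correct and follows essentially the same route as the paper: (1)~$\Rightarrow$~(2) via Lemma~\ref{S-contra-Delta} combined with Lemma~\ref{ext-from-K-decomp}, (2)~$\Rightarrow$~(1) by reducing $\m$\+contramodules to $s$\+contramodules and hence $S$\+contramodules (which is exactly how the paper's cited Lemma~\ref{S-contra-m-contra-local} proves that direction) together with closure under products, and uniqueness via Lemma~\ref{product-decomp-unique}. The only cosmetic difference is that you take $M=P$ explicitly in the $\Delta$\+argument and spell out the closure-under-products step, both of which the paper leaves implicit.
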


\begin{proof}
 (1) $\Rightarrow$ (2) Let $P$ be an $S$\+contramodule $R$\+module.
 According to Lemma~\ref{S-contra-Delta}, there exists an $R$\+module
$M$ such that $P\cong\Delta(M)=\Ext_R^1(K^\bullet,M)$.
 The desired direct product decomposition is now provided by
Lemma~\ref{ext-from-K-decomp}.

 (2) $\Rightarrow$ (1) holds without the $S$\+h-nil hypothesis.
 It suffices to observe that, for any $\m\in\Max R$,
\,$\m\cap S=\varnothing$, any $\m$\+contramodule $R$\+module
is an $S$\+contramodule by Lemma~\ref{S-contra-m-contra-local}.

 The direct product decomposition in~(2) is unique and functorial (when it
exists) by Lemma~\ref{product-decomp-unique}, since any
$\m$\+contramodule $R$\+module is an $S$\+contramodule
$R_\m$\+module.
 (This is valid for any $S$\+h-local ring~$R$.)
\end{proof}

\begin{cor} \label{S-h-nil-product-decomp-exists}
 Let $R$ be an $S$\+h-nil ring.
 Then every $S$\+contramodule $P$ is isomorphic to the direct product
of its colocalizations\/
$\prod\limits_{\m\in\Max R;\,\m\cap S\ne\varnothing} P^\m$.
 Furthermore, one has $P^\n=0$ for any\/ $\n\in\Max R$ such that\/
$\n\cap S=\varnothing$.
\end{cor}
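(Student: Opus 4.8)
The plan is to obtain this statement from Corollary~\ref{S-contra-m-contra-product-decomp} by recognizing the (a~priori unspecified) $\m$\+contramodule components produced there as the colocalizations $P^\m=\Hom_R(R_\m,P)$. Since an $S$\+h-nil ring is in particular $S$\+h-local, Lemma~\ref{two-maximal-ideals} applies: for any two distinct maximal ideals $\m\ne\n$, every element of $S$ is invertible in $R_\m\otimes_R R_\n$. The first step is to fix, by Corollary~\ref{S-contra-m-contra-product-decomp}, an isomorphism $P\cong\prod_{\m\cap S\ne\varnothing}P(\m)$ in which each $P(\m)$ is an $\m$\+contramodule; by Lemma~\ref{S-contra-m-contra-local}, each such $P(\m)$ is simultaneously an $R_\m$\+module and an $S$\+h-reduced $R$\+module.

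Next I would compute the colocalization $P^\n=\Hom_R(R_\n,P)$ for a fixed maximal ideal $\n$ with $\n\cap S\ne\varnothing$. As $\Hom_R(R_\n,-)$ commutes with direct products, it suffices to evaluate $\Hom_R(R_\n,P(\m))$ for each~$\m$. For the diagonal term $\m=\n$, the localization $R\to R_\n$ is a ring epimorphism, so evaluation at~$1$ identifies $\Hom_R(R_\n,P(\n))$ with $P(\n)$, using that $P(\n)$ is an $R_\n$\+module. For an off-diagonal term $\m\ne\n$, I would argue exactly as in the proof of Lemma~\ref{product-decomp-unique}: by tensor-hom adjunction $\Hom_R(R_\n,P(\m))\cong\Hom_R(R_\n\otimes_R R_\m,\,P(\m))$, the module $R_\n\otimes_R R_\m$ is an $R_S$\+module by Lemma~\ref{two-maximal-ideals}, and since $P(\m)$ is $S$\+h-reduced this $\Hom$ vanishes. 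Combining the two cases gives $P^\n\cong P(\n)$, and feeding this back into the decomposition yields $P\cong\prod_{\m\cap S\ne\varnothing}P^\m$.

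Finally I would dispose of the maximal ideals $\n$ with $\n\cap S=\varnothing$. Here $P^\n=\Hom_R(R_\n,P)$ is an $S$\+contramodule by Lemma~\ref{L:torsion-Hom}~(2), hence $S$\+h-reduced; at the same time every element of $S$ is invertible in $R_\n$, so $R_\n$, and therefore $P^\n$, is an $R_S$\+module and thus $S$\+h-divisible. Being both $S$\+h-reduced and $S$\+h-divisible, $P^\n$ must vanish.

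I do not anticipate a serious obstacle, since the substantive work has already been packaged into Corollary~\ref{S-contra-m-contra-product-decomp} and the surrounding $S$\+h-nil machinery. The one point demanding attention is the off-diagonal vanishing $\Hom_R(R_\n,P(\m))=0$ for $\m\ne\n$, which rests on the $S$\+h-local hypothesis through Lemma~\ref{two-maximal-ideals}; but this is precisely the computation already performed in Lemma~\ref{product-decomp-unique}, and it carries over once the components $P(\m)$ are known to be $S$\+h-reduced $R_\m$\+modules.
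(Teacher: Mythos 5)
Your proposal is correct and follows essentially the same route as the paper's proof: both start from the decomposition $P\cong\prod_\m P(\m)$ of Corollary~\ref{S-contra-m-contra-product-decomp} and then identify each factor with the colocalization by showing $P(\m)^\m\cong P(\m)$ and $P(\m)^\n=0$ for $\m\ne\n$, with Lemma~\ref{two-maximal-ideals} supplying the key invertibility of $S$ in $R_\m\otimes_R R_\n$. The only cosmetic difference is in the off-diagonal vanishing: the paper observes that $P(\m)^\n$ is simultaneously an $(R_\m\otimes R_\n)$-module (hence an $R_S$\+module) and an $S$\+contramodule via Lemma~\ref{L:torsion-Hom}~(2), while you move the $R_S$\+structure to the source by adjunction and invoke $S$\+h-reducedness of $P(\m)$ --- the same computation, organized differently.
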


\begin{proof}
 According to Corollary~\ref{S-contra-m-contra-product-decomp},
there exists a collection of $\m$\+contra\-module $R$\+modules $P(\m)$,
one for each $\m\in\Max R$, \,$\m\cap S\ne\varnothing$, such that
$P\cong\prod_\m P(\m)$.
 Now we have $P(\m)^\m\simeq P(\m)$, since $P(\m)$ is an $R_\m$\+module.
 For any $\n\in\Max R$, \,$\n\ne\m$, the $R$\+module $P(\m)^\n$ is
an $(R_\m\otimes R_\n)$-module, so by
Lemma~\ref{two-maximal-ideals} it is an $R_S$\+module.
 On the other hand, $P(\m)$ is an $S$\+contramodule by
Lemma~\ref{S-contra-m-contra-local}, hence $P(\m)^\n$ is
an $S$\+contramodule by Lemma~\ref{L:torsion-Hom}~(2).
 Thus $P(\m)^\n=0$.
 It follows that $P^\m\cong P(\m)$ for all $\m\in\Max R$,
\,$\m\cap S\ne\varnothing$, and $P^\n=0$ for all $\n\in\Max R$,
\,$\n\cap S=\varnothing$.
\end{proof}

%\CHR{New formulation} 

\begin{cor} Let $R$ be an $S$\+h-nil ring. The following hold:
 \begin{enumerate}
 \item The category of $S$\+contramodule
$R$\+modules is abelian.
\item The mutually inverse functors $P\mapsto (P^\m)_\m$ and
$(P(\m))_\m\mapsto\prod_\m P(\m)$ establish an equivalence between
the abelian category of $S$\+contramodule $R$\+modules and
the Cartesian product of the abelian categories of $\m$\+contramodule
$R$\+modules over all $\m\in\Max R$, \,$\m\cap S\ne\varnothing$.
\end{enumerate}
\end{cor}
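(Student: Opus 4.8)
The plan is to treat the two assertions separately, leaning on the structural results already established for $S$\+h-nil rings. For part~(1), I would first invoke Theorem~\ref{T:S-h-nil-is-Matlis}, which gives $\operatorname{p.dim}_R R_S\le1$, so that Lemma~\ref{S-contra}~(3--4) applies and the full subcategory of $S$\+contramodule $R$\+modules is closed under kernels, cokernels, extensions, and infinite products taken in $\Modl R$. The key observation is then purely categorical: a full additive subcategory of an abelian category that is closed under the kernels and cokernels computed in the ambient category is itself abelian, with the inherited exact structure and an exact inclusion functor. Indeed, for a morphism $f$ of $S$\+contramodules its kernel and cokernel in $\Modl R$ lie again in the subcategory, and since $\Modl R$ is abelian the image $\Ker(\Coker f)$ does too, so coimage and image agree inside the subcategory. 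This yields part~(1) at once.

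For part~(2), I would first check that the two functors are well defined. The functor $\Psi\colon (P(\m))_\m\mapsto\prod_\m P(\m)$ lands among $S$\+contramodules by Corollary~\ref{S-contra-m-contra-product-decomp}, implication (2)$\Rightarrow$(1). For $\Phi\colon P\mapsto(P^\m)_\m$, I note that $P^\m=\Hom_R(R_\m,P)$ is an $S$\+contramodule by Lemma~\ref{L:torsion-Hom}~(2) and is visibly an $R_\m$\+module, hence an $\m$\+contramodule by Lemma~\ref{S-contra-m-contra-local}. To even speak of the Cartesian product of abelian categories, one first records that each category of $\m$\+contramodule $R$\+modules is abelian, which follows from Lemma~\ref{t-contra}~(1) by the same categorical argument as in part~(1).

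The two natural isomorphisms witnessing the equivalence come from the preceding corollaries. The isomorphism $\Psi\Phi\cong\operatorname{id}$ on the category of $S$\+contramodules is precisely Corollary~\ref{S-h-nil-product-decomp-exists}, which identifies any $S$\+contramodule $P$ with $\prod_\m P^\m$. For $\Phi\Psi\cong\operatorname{id}$ I would use that the colocalization $\Hom_R(R_\m,{-})$ commutes with products, so that $\bigl(\prod_\n P(\n)\bigr)^\m\cong\prod_\n P(\n)^\m$; the computation inside the proof of Corollary~\ref{S-h-nil-product-decomp-exists} shows $P(\m)^\m\cong P(\m)$ while $P(\n)^\m=0$ for $\n\ne\m$, so the product collapses to $P(\m)$ and the family $(P(\m))_\m$ is recovered. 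Finally, to upgrade these object-level assignments to an honest equivalence of categories, I would appeal to Lemma~\ref{product-decomp-unique}: since $\m$\+contramodules are in particular $S$\+h-reduced, every morphism $\prod_\m P(\m)\to\prod_\m Q(\m)$ of $S$\+contramodules decomposes uniquely as a product of $R_\m$\+module morphisms $P(\m)\to Q(\m)$, which says exactly that $\Psi$ is fully faithful; combined with essential surjectivity (from $\Psi\Phi\cong\operatorname{id}$), this gives the equivalence.

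I expect the substantive mathematical content to be entirely absorbed into the three cited inputs — the existence of the product decomposition (Corollary~\ref{S-contra-m-contra-product-decomp}), the computation of the colocalizations (Corollary~\ref{S-h-nil-product-decomp-exists}), and the $\Hom$\+factorization (Lemma~\ref{product-decomp-unique}) — so the proof is mostly assembly. The one point that deserves care, rather than a genuine obstacle, is confirming that $\Phi$ and $\Psi$ are mutually inverse as \emph{functors} and not merely as bijections on objects: this naturality is what Lemma~\ref{product-decomp-unique} supplies, and I would make sure the morphism-level decomposition is applied with $Q(\m)$ an $\m$\+contramodule (hence $S$\+h-reduced) so that the lemma's hypotheses are met.
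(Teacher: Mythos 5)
Your proposal is correct and follows essentially the same route as the paper: part~(1) via Theorem~\ref{T:S-h-nil-is-Matlis} together with Lemma~\ref{S-contra}~(3--4), and part~(2) by assembling Corollaries~\ref{S-contra-m-contra-product-decomp} and~\ref{S-h-nil-product-decomp-exists} (with the functoriality supplied by Lemma~\ref{product-decomp-unique}, which the paper's uniqueness statement already encodes). The paper's proof is just a terse citation of these same ingredients; your write-up fills in the well-definedness and morphism-level details correctly.
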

\begin{proof}
(1) follows by Theorem~\ref{T:S-h-nil-is-Matlis} and
Lemma~\ref{S-contra}~(3--4).

(2) follows from Corollaries~\ref{S-contra-m-contra-product-decomp}
and~\ref{S-h-nil-product-decomp-exists}.
\end{proof}

\section{$S$-almost perfect rings} 
\label{S-almost-perfect-secn}

% \CHG{In this section, as in the previous one, $S$ will denote
%a multiplicative subset of a commutative ring~$R$ and
%$\phi$~the localization map $R\to R_S$.}

 The main result of this section is
Theorem~\ref{T:characterization}, which provides
a characterization of $S$\+almost perfect rings in terms of
ring-theoretic and homological properties.
 We also describe $S$\+strongly flat modules over
an $S$\+h-nil ring $R$ in
Proposition~\ref{P:S-strongly-flat-over-S-h-nil}.

\begin{lem}\label{L:perfect-Matlis}
 If $R/sR$ is a perfect ring for every $s\in S$, then the ring $R$ is
$S$\+h-nil.
 Hence, in particular, $R$ is $S$\+h-local and the $R$\+module $R_S$
has projective dimension at most one.
\end{lem}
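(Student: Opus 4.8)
The plan is to reduce everything to the internal structure criterion for $S$\+h-nil rings provided by Lemma~\ref{L:S-h-nil}, which asserts that $R$ is $S$\+h-nil precisely when, for every $s\in S$, the quotient $R/sR$ is a finite product of local rings each of which has nil maximal ideal. Since we are given that each $R/sR$ is a perfect ring, the first step is to unpack what perfectness yields via Lemma~\ref{L:perfect}: a commutative perfect ring is a finite product of local rings, each one with a $T$-nilpotent maximal ideal (condition~(3) there). This already delivers the ``finite product of local rings'' half of the criterion for free.

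The only point that requires a brief verification is that a $T$-nilpotent maximal ideal is in particular nil. I would argue this directly from the definition of left $T$-nilpotency: given an element $a$ in such an ideal~$J$, applying the $T$-nilpotency condition to the constant sequence $a,a,a,\dots$ produces an index $m$ with $a^m=a_1a_2\dotsm a_m=0$, so $a$ is nilpotent. Hence every local factor of $R/sR$ has nil maximal ideal, and the hypothesis of Lemma~\ref{L:S-h-nil} is met for every $s\in S$. Invoking that lemma then gives that $R$ is $S$\+h-nil, which is the first and principal assertion.

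For the ``Hence, in particular'' clause, no new work is needed: it is remarked in the text immediately after the definition of $S$\+h-nil rings that every $S$\+h-nil ring is $S$\+h-local, and the bound $\operatorname{p.dim}_R R_S\le 1$ is exactly the content of Theorem~\ref{T:S-h-nil-is-Matlis}, which applies to any $S$\+h-nil commutative ring. I would simply cite these two facts to close the argument.

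I do not anticipate any genuine obstacle here, since the lemma is an assembly of previously established results; the statement functions as a bridge connecting the ring-theoretic hypothesis ``all $R/sR$ perfect'' to the homological machinery built for $S$\+h-nil rings. The one step worth stating explicitly, rather than leaving implicit, is the passage from $T$-nilpotent to nil, as this is what matches the perfectness conclusion of Lemma~\ref{L:perfect} to the precise hypothesis of Lemma~\ref{L:S-h-nil}.
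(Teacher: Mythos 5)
Your proof is correct, but it takes a slightly different route from the paper's. The paper verifies the definition of $S$\+h-nil directly: the finiteness condition comes from Lemma~\ref{L:perfect}~(3) (just as in your argument), but for the condition on prime ideals the paper argues that any prime $\p$ containing some $s\in S$ gives a quotient $R/\p$ that is a perfect domain (being a quotient of $R/sR$), hence a field by the ``moreover'' clause of Lemma~\ref{L:perfect}, so $\p$ is maximal. You instead route everything through the characterization Lemma~\ref{L:S-h-nil}, which requires the local factors of $R/sR$ to have \emph{nil} maximal ideals, and you supply the missing link by observing that a $T$\+nilpotent ideal is nil (apply the definition to the constant sequence $a,a,a,\dots$ to get $a^m=0$). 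Both arguments are sound and of comparable length; the underlying fact in each case is that $R/sR$ has no nonmaximal primes, seen either via ``perfect domain $=$ field'' (paper) or via ``nil maximal ideal forces every prime to be maximal'' (you, packaged inside Lemma~\ref{L:S-h-nil}). Your version has the mild advantage of reusing a characterization already proved in the paper and making explicit the $T$\+nilpotent-implies-nil step, which the paper's own proof of Lemma~\ref{L:S-h-nil} leaves implicit; the paper's version is self-contained at this point and does not need Lemma~\ref{L:S-h-nil} at all. The final clause is handled identically in both: $S$\+h-nil implies $S$\+h-local by the remark following the definition, and $\operatorname{p.dim}_RR_S\le1$ is Theorem~\ref{T:S-h-nil-is-Matlis}.
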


\begin{proof}
 The fact that every element $s\in S$ is contained in only finitely many
maximal ideals follows by Lemma~\ref{L:perfect}~(3).
 Let $\p$ be a prime ideal containing an element~$s$ of $S$.
 Then, $R/\p$ is a perfect ring, being a quotient of $R/sR$, and it is
a field since it is a domain; thus $\p$~is maximal.
 
 By Theorem~\ref{T:S-h-nil-is-Matlis}, we have
$\operatorname{p.dim}_RR_S\leq1$.
\end{proof}

\begin{lem}\label{L:1}
 Assume that\/ $\operatorname{p.dim}_RR_S\leq 1$.
 Let $M$ be an $R$\+module such that $M\otimes_RR_S$ is a projective
$R_S$\+module.
 Then the $R$\+module $M$ is $S$\+strongly flat if and only if\/
$\Ext^1_R(M,P)=0=\Ext^2_R(M,P)$ for every $S$\+contramodule
(i.e., $S$\+reduced $S$\+weakly cotorsion) $R$\+module $P$.
\end{lem}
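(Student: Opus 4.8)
The plan is to establish the two implications separately; the forward direction is short, and essentially all the substance lies in the converse.

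\emph{Forward direction.} Suppose $M$ is $S$\+strongly flat. As every $S$\+contramodule is in particular $S$\+weakly cotorsion, the vanishing $\Ext^1_R(M,P)=0$ is immediate. For the $\Ext^2$, I would apply Lemma~\ref{L:SF} to realize $M$ as a direct summand of a module $G$ in a short exact sequence $0\to R^{(\beta)}\to G\to R_S^{(\gamma)}\to0$. Applying $\Hom_R(-,P)$ and noting that $R^{(\beta)}$ is projective while $\Ext^2_R(R_S^{(\gamma)},P)\cong\prod_\gamma\Ext^2_R(R_S,P)=0$ by $\operatorname{p.dim}_RR_S\le1$, one gets $\Ext^2_R(G,P)=0$ and hence $\Ext^2_R(M,P)=0$. (This half uses neither the projectivity of $M\otimes_RR_S$ nor that $P$ is a contramodule.)

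\emph{Converse.} Here I want $\Ext^1_R(M,C)=0$ for every $S$\+weakly cotorsion $C$, as this places $M$ in ${}^{\perp_1}(S\text{-}\WC)=S\text{-}\SF$. The device is to route $C$ through the complex $K^\bullet=(R\xrightarrow{\phi}R_S)$. Since $M\otimes_RR_S$ is a projective $R_S$\+module it is a summand of a free one, so $\Ext^i_R(M\otimes_RR_S,C)$ is a summand of a product of copies of $\Ext^i_R(R_S,C)$; this vanishes for all $i\ge1$, because $\Ext^1_R(R_S,C)=0$ ($C$ is $S$\+weakly cotorsion) and $\Ext^{\ge2}_R(R_S,C)=0$ ($\operatorname{p.dim}_RR_S\le1$). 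Applying $\RHom_R(-,C)$ to the triangle $M\to M\otimes_RR_S\to M\Lotimes_RK^\bullet\to M[1]$ induced by $R\to R_S\to K^\bullet\to R[1]$, these two vanishings collapse the long exact sequence into an isomorphism $\Ext^1_R(M,C)\cong\Ext^2_R(M\Lotimes_RK^\bullet,C)=H^2\RHom_R\bigl(M,\RHom_R(K^\bullet,C)\bigr)$.

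It then remains to see that this last group vanishes. The complex $L^\bullet=\RHom_R(K^\bullet,C)$ has cohomology concentrated in degrees $0$ and $1$: by Lemma~\ref{ext-from-K} one has $H^0\cong\Hom_R(R_S/\phi(R),C)$ and $H^1=\Delta(C)$, while $H^{\ge2}=0$ since $\operatorname{p.dim}_RR_S\le1$. Both cohomology modules are $S$\+contramodules, because the cohomology $R$\+modules $I$ and $R_S/\phi(R)$ of $K^\bullet$ are $S$\+torsion, so $\Hom$ and $\Ext$ out of them take values in $S$\+contramodules (Lemma~\ref{L:torsion-Hom} and the $S$\+version of Lemma~\ref{a-contra-torsion-ext} available under $\operatorname{p.dim}_RR_S\le1$), and the class of $S$\+contramodules is closed under kernels, cokernels and extensions by Lemma~\ref{S-contra}(3)--(4). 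The truncation triangle $H^0\to L^\bullet\to H^1[-1]\to H^0[1]$, after applying $\RHom_R(M,-)$, gives the exact fragment $\Ext^2_R(M,H^0)\to H^2\RHom_R(M,L^\bullet)\to\Ext^1_R(M,H^1)$ whose two outer terms vanish by the standing hypotheses $\Ext^2_R(M,-)=0$ and $\Ext^1_R(M,-)=0$ on $S$\+contramodules. Hence $\Ext^1_R(M,C)=0$, as required.

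The main obstacle is conceptual rather than computational: a generic $S$\+weakly cotorsion $C$ is neither an $R_S$\+module nor an $S$\+contramodule, so neither hypothesis on $M$ applies to it directly. Passing to $\RHom_R(K^\bullet,C)$ is exactly what separates the ``$R_S$\+part'' of $C$ (annihilated by projectivity of $M\otimes_RR_S$) from its two $S$\+contramodule cohomologies (annihilated by the $\Ext^1$ and $\Ext^2$ hypotheses). The single delicate point — where the assumption $\operatorname{p.dim}_RR_S\le1$ is genuinely spent — is in checking that $H^0$ and $H^1$ of that complex are indeed $S$\+contramodules.
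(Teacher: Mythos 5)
Your proof is correct, and it is essentially the paper's own argument in derived-category packaging: the truncation of $\RHom_R(K^\bullet,C)$ into $H^0\cong\Hom_R(R_S/\phi(R),C)$ and $H^1\cong\Delta(C)$ matches exactly the paper's module-level decomposition of an $S$-weakly cotorsion module $C$ via its maximal $S$-h-divisible submodule $h(C)$ (there, the two short exact sequences $0\to\Hom_R(R_S/\phi(R),C)\to\Hom_R(R_S,C)\to h(C)\to0$ and $0\to h(C)\to C\to C/h(C)\to0$, with $C/h(C)\cong\Delta(C)$), and the hypotheses on $M$ are spent against the same three modules in both proofs. The only divergence is the justification that $\Delta(C)$ is an $S$-contramodule: you deduce it from the $S$-torsion cohomology of $K^\bullet$ together with the $S$-analogue of Lemma~\ref{a-contra-torsion-ext} and the closure properties of Lemma~\ref{S-contra}(3--4) --- a step explicitly validated under $\operatorname{p.dim}_RR_S\le1$ by the remark following Lemma~\ref{a-contra-torsion-ext} --- whereas the paper observes that $C/h(C)$ is $S$-h-reduced and $S$-weakly cotorsion (or cites the analogous contramodule fact from the literature).
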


\begin{proof}
 When $\operatorname{p.dim}_RR_S\le1$, the projective dimension of
any $S$\+strongly flat $R$\+module also does not exceed~$1$.
 So the condition is clearly necessary.

 Let $C$ be an $S$\+weakly cotorsion $R$\+module, and let
$h(C)$ be its maximal $S$\+h-divisible submodule.
 We have two short exact sequences
(cf.\ the long exact sequence~($\ast\ast$) in
Section~\ref{S-h-local-secn}):
\begin{gather}  \tag{1}
 0\la\Hom_R(R_S/\phi(R),C)\la\Hom_R(R_S,C)\la h(C)\la 0 \\
 0\la h(C)\la C\la C/h(C)\la 0 \tag{2}
\end{gather}

 Furthermore, since $\operatorname{p.dim}_RR_S\le1$,
the class of $S$\+weakly cotorsion $R$\+modules is closed under
epimorphic images.
 So $C/h(C)$ is $S$\+weakly cotorsion.
 The $R$\+module $C/h(C)$ is $S$\+h-reduced by
Proposition~\ref{P:Matlis}~(2) or by~\cite[proof of Lemma~1.8(a)]{Pos}.
 Thus $C/h(C)$ is an $S$\+contramodule.
 Alternatively, one can see from the long exact sequence~($\ast\ast$)
that $C/h(C)\cong\Delta(C)$, since $C$ is $S$\+weakly cotorsion.
 Since $\operatorname{p.dim}_RR_S\le1$, the $R$\+module
$\Delta(C)$ is an $S$\+contramodule by~\cite[Lemma~1.7(c)]{Pos}. 

 The $R$\+module $R_S/\phi(R)$ is $S$\+torsion, hence
the $R$\+module $A=\Hom_R(R_S/\phi(R),C)$ is an $S$\+contramodule
by Lemma~\ref{L:torsion-Hom}~(1).
 
 Now $\Ext^1_R(M,\Hom_R(R_S,C))=0$ since $M\otimes_RR_S$ is
a projective $R_S$\+module, and $\Ext^1_R(M,C/h(C))=0=\Ext^2_R(M,A)$
since $C/h(C)$ and $A$ are $S$\+contramodules.
 From the exact sequence~(1) we obtain
$0=\Ext^1_R(M,\Hom_R(R_S,C))\to\Ext^1_R(M,h(C))\to\Ext^2_R(M,A)=0$,
and from the exact sequence~(2), 
\,$0=\Ext^1_R(M,h(C))\to\Ext^1_R(M,C)\to\Ext^1_R(M,C/h(C))=0$.
\end{proof}
 
\begin{cor}\label{C:1}
 Let $S$ be a multiplicative subset of a commutative ring $R$ such
that $R_S$ is a perfect ring and\/ $\operatorname{p.dim}_RR_S\leq 1$.
 Then a flat $R$\+module $F$ is $S$\+strongly flat if and only if\/
$\Ext^1_R(F,P)=0=\Ext^2_R(F,P)$ for every $S$\+contramodule
$R$\+module~$P$.
\end{cor}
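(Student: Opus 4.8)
The plan is to obtain this corollary as an immediate instance of Lemma~\ref{L:1}. That lemma already delivers exactly the desired equivalence for any $R$\+module $M$ satisfying the hypothesis that $M\otimes_R R_S$ is a projective $R_S$\+module, under the standing assumption $\operatorname{p.dim}_R R_S\le1$ (which is also in force here). So the entire task reduces to verifying that, for a flat $R$\+module $F$ over a ring whose localization $R_S$ is perfect, this projectivity hypothesis holds automatically; having done that, I would simply invoke Lemma~\ref{L:1} with $M=F$.

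First I would record the flat base-change fact: whenever $F$ is a flat $R$\+module and $R\to R_S$ is the localization homomorphism, the $R_S$\+module $F\otimes_R R_S$ is flat. This can be seen from the natural isomorphism $(F\otimes_R R_S)\otimes_{R_S}N\cong F\otimes_R N$ for every $R_S$\+module $N$: restriction of scalars along $R\to R_S$ is exact, and $F\otimes_R{-}$ is exact because $F$ is $R$\+flat, so the composite functor $(F\otimes_R R_S)\otimes_{R_S}{-}$ is exact on $\Modl{R_S}$.

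Next, since $R_S$ is a perfect ring, every flat $R_S$\+module is projective (this is the defining property of perfectness recalled just before Lemma~\ref{L:perfect}). Therefore $F\otimes_R R_S$ is a projective $R_S$\+module, the hypothesis of Lemma~\ref{L:1} is met with $M=F$, and that lemma yields the claimed equivalence verbatim. I anticipate no genuine obstacle here: the whole content is the observation that the projectivity condition featuring in Lemma~\ref{L:1} comes for free in the flat case once $R_S$ is perfect, the two supporting facts (preservation of flatness under base change, and the collapse of flat to projective over a perfect ring) both being routine.
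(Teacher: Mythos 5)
Your proposal is correct and coincides with the paper's own argument: the paper's proof of Corollary~\ref{C:1} is simply ``Follows from Lemma~\ref{L:1},'' with the verification you spelled out (that $F\otimes_R R_S$ is flat, hence projective, over the perfect ring $R_S$) left implicit. Your write-up just makes that routine step explicit.
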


\begin{proof}
 Follows from Lemma~\ref{L:1}.
\end{proof}

%\CHP{Returned back to its original place
\begin{lem}\label{s-contra-enochs-cotorsion}
 Let $R$ be a commutative ring and $s\in R$ an element such that
the ring $R/sR$ is perfect.
 Then every $s$\+contramodule $R$\+module is Enochs cotorsion.
\end{lem}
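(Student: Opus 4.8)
The plan is to reduce the statement directly to Proposition~\ref{t-contramodules-relatively-cotorsion} with $t=s$. By the definition of Enochs cotorsion, I must show that $\Ext^1_R(F,P)=0$ for every flat $R$\+module $F$, where $P$ is a given $s$\+contramodule. Proposition~\ref{t-contramodules-relatively-cotorsion} asserts exactly this vanishing (indeed $\Ext^i_R(F,P)=0$ for all $i\ge1$) under the single extra hypothesis that the $R/sR$\+module $F/sF$ be projective. So the whole task reduces to checking that this hypothesis is automatically satisfied whenever $R/sR$ is a perfect ring.

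To verify it, first I would note that $F/sF\cong F\otimes_R R/sR$, which is the base change of the flat $R$\+module $F$ along the quotient homomorphism $R\to R/sR$; hence $F/sF$ is a flat $R/sR$\+module. Since $R/sR$ is perfect, every flat $R/sR$\+module is projective by the defining property of a perfect ring (Lemma~\ref{L:perfect}), so $F/sF$ is projective over $R/sR$. Applying Proposition~\ref{t-contramodules-relatively-cotorsion} then yields $\Ext^1_R(F,P)=0$, and since $F$ was an arbitrary flat $R$\+module, $P$ is Enochs cotorsion. I do not expect any genuine obstacle here: the argument is a clean combination of the relative cotorsion property of $t$\+contramodules (the hard analytic input, already established earlier through the $\varprojlim$/$\varprojlim^1$ machinery of Lemmas~\ref{ext-from-k-t-computed} and~\ref{ext-into-lim-and-derived-lim-vanishing}) with the elementary fact that flat modules over a perfect ring are projective.
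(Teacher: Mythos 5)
Your proof is correct and is essentially the paper's own argument: the paper proves this lemma by citing Proposition~\ref{t-contramodules-relatively-cotorsion} directly, leaving implicit exactly the verification you spell out (that $F/sF\cong F\otimes_R R/sR$ is flat, hence projective, over the perfect ring $R/sR$). Nothing further is needed.
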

\begin{proof}
 Follows from Proposition~\ref{t-contramodules-relatively-cotorsion}.
\end{proof}

\begin{cor} \label{S-contramod-Enochs-cotorsion}
 Let $S$ be a multiplicative subset of a commutative ring $R$ such that
the ring $R/sR$ is perfect for every $s\in S$.
 Then every $S$\+contramodule $R$\+module is Enochs cotorsion.
\end{cor}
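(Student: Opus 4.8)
The plan is to reduce the assertion for the whole multiplicative set $S$ to the single-element statement of Lemma~\ref{s-contra-enochs-cotorsion} by means of the product decomposition of $S$\+contramodules available over $S$\+h-nil rings. First I would observe that, since $R/sR$ is a perfect ring for every $s\in S$, Lemma~\ref{L:perfect-Matlis} tells us that $R$ is an $S$\+h-nil ring (and, in particular, $\operatorname{p.dim}_RR_S\le1$). This is precisely what makes the local-global techniques of Section~\ref{S-h-nil-secn} applicable.

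Next, let $P$ be an arbitrary $S$\+contramodule $R$\+module. By Corollary~\ref{S-contra-m-contra-product-decomp}, there is an isomorphism $P\cong\prod_\m P(\m)$, the product ranging over those maximal ideals $\m\in\Max R$ with $\m\cap S\ne\varnothing$, where each $P(\m)$ is an $\m$\+contramodule $R$\+module. For each such~$\m$ I would choose an element $s\in\m\cap S$; then $P(\m)$, being an $\m$\+contramodule, is in particular an $s$\+contramodule (an $\m$\+contramodule is by definition a $t$\+contramodule for every $t\in\m$). Since $R/sR$ is perfect by hypothesis, Lemma~\ref{s-contra-enochs-cotorsion} applies and shows that $P(\m)$ is Enochs cotorsion.

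Finally, I would invoke the fact that the class of Enochs cotorsion modules, being the right Ext\+orthogonal class $\Flat^{\perp_1}$ of the flat modules, is closed under arbitrary products in $\Modl R$; concretely, $\Ext^1_R(F,\prod_\m P(\m))\cong\prod_\m\Ext^1_R(F,P(\m))$ for every flat $R$\+module $F$, so vanishing of each factor forces vanishing of the product. Hence $P\cong\prod_\m P(\m)$ is Enochs cotorsion, as desired.

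The conceptual work here is entirely contained in the earlier sections: the only genuine obstacle is the existence of the product decomposition of Corollary~\ref{S-contra-m-contra-product-decomp}, which in turn rests on Theorem~\ref{T:S-h-nil-is-Matlis} and the $t$\+contramodule machinery of Section~\ref{t-contra-secn}. Granting those results, the argument above is routine; the one point to keep in mind is that every maximal ideal $\m$ occurring in the decomposition satisfies $\m\cap S\ne\varnothing$, so that an element $s\in\m\cap S$ with $R/sR$ perfect is always available to feed into Lemma~\ref{s-contra-enochs-cotorsion}.
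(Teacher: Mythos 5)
Your proposal is correct and follows exactly the paper's own argument: Lemma~\ref{L:perfect-Matlis} to get $S$\+h-nility, Corollary~\ref{S-contra-m-contra-product-decomp} for the product decomposition into $\m$\+contramodules, Lemma~\ref{s-contra-enochs-cotorsion} applied to a chosen $s\in\m\cap S$ for each factor, and closure of the Enochs cotorsion class under products to conclude. The only difference is that you spell out the last (product-closure) step, which the paper leaves implicit.
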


\begin{proof}
 The ring $R$ is $S$\+h-nil by Lemma~\ref{L:perfect-Matlis}, so
Corollary~\ref{S-contra-m-contra-product-decomp} applies.
 Thus every $S$\+contramodule $R$\+module $P$ is isomorphic to
a direct product of the form
$\prod\limits_{\m\in\Max R;\,\m\cap S\ne\varnothing} P(\m)$, where
$P(\m)$ are some $\m$\+contramodule $R$\+modules.
 Let $s$~be an element of the intersection $\m\cap S$; then $P(\m)$ is
an $s$\+contramodule.
 According to Lemma~\ref{s-contra-enochs-cotorsion}, $P(\m)$ is
an Enochs cotorsion $R$\+module, and it follows that $P$ is
Enochs cotorsion, too.
\end{proof}

\begin{defn} If $S$ is a multiplicative subset of a commutative ring $R$,
we say that $R$ is \emph{$S$\+almost perfect} if $R_S$ is a perfect ring
and $R/sR$ is a perfect ring for every $s\in S$.

 We say that $R$ is \emph{$S$\+semiartinian} if every nonzero quotient
of $R$ modulo an ideal intersecting $S$ contains a simple module.
\end{defn}

 Now we can deduce the following proposition, which is one of our
main results.

\begin{prop} \label{S-almost-perfect-implies}
 Let $R$ be an $S$\+almost perfect commutative ring.
 Then all flat $R$\+modules are $S$\+strongly flat.
\end{prop}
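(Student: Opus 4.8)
The plan is to reduce the statement to the homological criterion of Corollary~\ref{C:1} and then verify the required Ext-vanishing by means of the product decomposition of $S$\+contramodules. First I would invoke Lemma~\ref{L:perfect-Matlis}: since $R/sR$ is perfect for every $s\in S$, the ring $R$ is $S$\+h-nil and $\operatorname{p.dim}_RR_S\le1$. Together with the hypothesis that $R_S$ is perfect, this means the assumptions of Corollary~\ref{C:1} are satisfied. Hence it suffices to show that for every flat $R$\+module $F$ and every $S$\+contramodule $R$\+module $P$ one has $\Ext^1_R(F,P)=0=\Ext^2_R(F,P)$.

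Next I would decompose $P$. Because $R$ is $S$\+h-nil, Corollary~\ref{S-contra-m-contra-product-decomp} gives an isomorphism $P\cong\prod_\m P(\m)$, the product running over $\m\in\Max R$ with $\m\cap S\ne\varnothing$, where each $P(\m)$ is an $\m$\+contramodule $R$\+module. Since $\Ext^i_R(F,{-})$ commutes with arbitrary direct products in its second argument, we obtain $\Ext^i_R(F,P)\cong\prod_\m\Ext^i_R(F,P(\m))$ for all~$i$. Thus it is enough to prove $\Ext^i_R(F,P(\m))=0$ for $i=1,2$ (indeed for all $i\ge1$) and each~$\m$.

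For a fixed $\m$ intersecting $S$, I would choose an element $s\in\m\cap S$. Then $P(\m)$, being an $\m$\+contramodule, is in particular an $s$\+contramodule. On the other hand, $F/sF\cong F\otimes_R R/sR$ is a flat $R/sR$\+module, and since $R/sR$ is perfect it is therefore projective over $R/sR$. Proposition~\ref{t-contramodules-relatively-cotorsion}, applied with $t=s$, now yields $\Ext^i_R(F,P(\m))=0$ for all $i\ge1$. Combining this with the reduction above gives $\Ext^1_R(F,P)=0=\Ext^2_R(F,P)$, and Corollary~\ref{C:1} lets us conclude that $F$ is $S$\+strongly flat.

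The argument is largely a matter of assembling machinery already in place, so no single step presents a serious difficulty. The point that most deserves care is that Corollary~\ref{C:1} demands vanishing of \emph{both} $\Ext^1$ and $\Ext^2$; the weaker statement that $S$\+contramodules are Enochs cotorsion (Corollary~\ref{S-contramod-Enochs-cotorsion}) only controls $\Ext^1$, so I must fall back on the full force of Proposition~\ref{t-contramodules-relatively-cotorsion}, which delivers vanishing in all positive degrees. The remaining small items to verify are the projectivity of $F/sF$ over the perfect ring $R/sR$ and the commutation of Ext with infinite products, both of which are routine.
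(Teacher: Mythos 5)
Your proof is correct, and at its core it is the same argument as the paper's: reduce via Lemma~\ref{L:perfect-Matlis} and Corollary~\ref{C:1} to showing $\Ext^1_R(F,P)=0=\Ext^2_R(F,P)$ for every flat $F$ and every $S$\+contramodule $P$, then decompose $P$ by Corollary~\ref{S-contra-m-contra-product-decomp} and apply Proposition~\ref{t-contramodules-relatively-cotorsion} with $t=s\in\m\cap S$. The only difference is organizational: at the last step the paper simply cites Corollary~\ref{S-contramod-Enochs-cotorsion} (every $S$\+contramodule is Enochs cotorsion), whereas you deliberately inline its proof because you believe that corollary ``only controls $\Ext^1$.'' That belief is mistaken: if $P$ is Enochs cotorsion and $F$ is flat, then $\Ext^i_R(F,P)=0$ for \emph{all} $i\ge1$, since the kernel of an epimorphism from a free module onto a flat module is again flat, so $\Ext^i_R(F,P)\cong\Ext^1_R(F_{i-1},P)$ for a flat $(i-1)$\+st syzygy $F_{i-1}$ of $F$; equivalently, the cotorsion pair $(\F_0,\Cot)$ is hereditary. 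This is exactly what makes the paper's one-line deduction legitimate. Your extra care is therefore harmless but unnecessary; both routes are valid, and yours is in effect the paper's proof with Corollary~\ref{S-contramod-Enochs-cotorsion} unfolded into its constituent lemmas.
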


\begin{proof}
 By Lemma~\ref{L:perfect-Matlis}, we have
$\operatorname{p.dim}_RR_S\le1$.
 Thus Corollary~\ref{C:1} applies, and it remains to show that
$\Ext_R^1(F,P)=0=\Ext_R^2(F,P)$ for all flat $R$\+modules $F$
and $S$\+contramodule $R$\+modules~$P$.
 This follows immediately from
Corollary~\ref{S-contramod-Enochs-cotorsion}.
\end{proof}

 The next lemma provides the converse implication.

\begin{lem} \label{S-almost-perfect-implied-by}
 Let $S$ be a multiplicative subset of a commutative ring~$R$.
 Assume that every flat $R$\+module is $S$\+strongly flat.
 Then the ring $R$ is $S$\+almost perfect.
\end{lem}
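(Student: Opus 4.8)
The goal is to show that if every flat $R$\+module is $S$\+strongly flat, then $R_S$ is perfect and $R/sR$ is perfect for every $s\in S$. My plan is to invoke the covering results already established in Section~\ref{strongly-flat-secn}, namely Lemmas~\ref{L:R_S-modules} and~\ref{L:R/sR-modules}. The key observation is that if $S$\+$\SF=\F_0$ (all flat modules are $S$\+strongly flat), then the complete cotorsion pair $(S$\+$\SF,S$\+$\WC)$ coincides with the flat cotorsion pair $(\F_0,\Cot)$. Since the flat cotorsion pair is \emph{perfect} (every module admits a flat cover, by the theorem of Bican--El~Bashir--Enochs), it follows that every $R$\+module admits an $S$\+strongly flat cover. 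In particular, every $R_S$\+module and every $R/sR$\+module, viewed as an $R$\+module, admits such a cover.

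The main steps, in order, are as follows. First I would record that the hypothesis gives $S$\+$\SF=\F_0$, so that $S$\+$\SF$\+covers are the same as flat covers, which exist for all $R$\+modules. Second, I would apply Lemma~\ref{L:R_S-modules}: since every $R_S$\+module admits an $S$\+$\SF$\+cover, the ring $R_S$ is perfect. Third, for each fixed $s\in S$, I would apply Lemma~\ref{L:R/sR-modules}: since every $R/sR$\+module admits an $S$\+$\SF$\+cover, the ring $R/sR$ is perfect. Combining these two conclusions yields precisely the definition of $S$\+almost perfect.

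The one point requiring care is the first step: I must justify that every $R$\+module has a flat cover and hence an $S$\+strongly flat cover under the hypothesis. This is where the equality $S$\+$\SF=\F_0$ is essential, together with the fact (Bican--El~Bashir--Enochs) that the class of flat modules is covering over any ring. The hypothesis does not a priori say that the two cotorsion pairs coincide as pairs, only that their left-hand classes agree; but since a cotorsion pair is determined by either of its two classes (via the orthogonality operations ${}^{\perp_1}$ and $\perp_1$), the equality $S$\+$\SF=\F_0$ forces $S$\+$\WC=\Cot$ as well, so the covers genuinely coincide.

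I expect the main obstacle to be purely bookkeeping rather than conceptual: one must be slightly careful that Lemmas~\ref{L:R_S-modules} and~\ref{L:R/sR-modules} are stated for covers of modules over the quotient/localization rings \emph{viewed as $R$\+modules}, and confirm that the covers furnished by flatness of the class are of the required $S$\+strongly flat type. Once this identification is in place, the proposition follows immediately by citing the two lemmas. There is no delicate estimate or construction here; the work was front-loaded into Section~\ref{strongly-flat-secn}.
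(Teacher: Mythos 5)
Your proposal is correct, but it follows a genuinely different route from the proof the paper writes out for this lemma. The paper's own argument is direct and cover-free: for $R_S$, any flat $R_S$\+module $G$ is flat over $R$, hence $S$\+strongly flat by hypothesis, and Lemma~\ref{L:S-strongly}~(1) gives that $G\cong G\otimes_R R_S$ is a projective $R_S$\+module; for $R/sR$, a Bass flat $R/sR$\+module $\overline B$ is lifted to a Bass flat $R$\+module $B$ with $\overline B\cong B/sB$, and Lemma~\ref{L:S-strongly}~(2) shows $\overline B$ is projective over $R/sR$, so $R/sR$ is perfect by the Bass criterion. Your argument instead runs through covers: the hypothesis gives $S$\+$\SF=\F_0$, flat covers exist over any ring by Bican--El~Bashir--Enochs, hence every $R$\+module has an $S$\+$\SF$\+cover, and Lemmas~\ref{L:R_S-modules} and~\ref{L:R/sR-modules} finish the job. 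This is precisely the alternative the authors acknowledge in the first sentence of their proof (``one can easily deduce this from Lemmas~\ref{L:R_S-modules} and~\ref{L:R/sR-modules}, but we prefer to give a direct argument''), and it is also the composite of the implications (2)$\Rightarrow$(5)$\Rightarrow$(1) in Theorem~\ref{T:characterization}, so there is no circularity. As for what each approach buys: yours is shorter modulo citations, but it leans on the deep external flat cover theorem and on the covering machinery of Section~\ref{strongly-flat-secn}, whereas the paper's direct argument is elementary and self-contained, testing perfectness only on Bass flat modules. One small simplification to your write-up: the detour through the equality of the right-hand classes $S$\+$\WC=\Cot$ is harmless but unnecessary, since the notion of an $\A$\+cover depends only on the class $\A$ itself; the equality $S$\+$\SF=\F_0$ already makes flat covers and $S$\+$\SF$\+covers literally the same morphisms.
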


\begin{proof}
 One can easily deduce this from Lemmas~\ref{L:R_S-modules}
and~\ref{L:R/sR-modules}, but we prefer to give a direct argument
as well.

 Let $G$ be a flat $R_S$\+module; then $G$ is also a flat $R$\+module.
 By assumption, $G$ is an $S$\+strongly flat $R$\+module.
 By Lemma~\ref{L:S-strongly}~(1), it follows that
$G\cong G\otimes_R R_S$ is a projective $R_S$\+module.
 We have shown that all flat $R_S$\+modules are projective;
so $R_S$ is a perfect ring.

 Let $s$~be an element of~$S$.
 In order to show that $R/sR$ is a perfect ring, we will check that
every Bass flat $R/sR$\+module is projective.
 Let $\overline a_1$, $\overline a_2$,~\dots, $\overline a_n$,~\dots\ be
a sequence of elements of $R/sR$.
 The \emph{Bass flat $R/sR$\+module} $\overline B$ is the direct limit
of the sequence
$$
 R/sR\stackrel{\overline a_1}\la R/sR\stackrel{\overline a_2}\la
 R/sR\stackrel{\overline a_3}\la\dotsb
$$
 Let $a_n\in R$ be some preimages of the elements
$\overline a_n\in R/sR$.
 Denote by $B$ the direct limit of the sequence of $R$\+modules
$R\stackrel{a_1}\to R\stackrel{a_2}\to R\stackrel{a_3}\to\dotsb$.
 Then $B$ is a (Bass) flat $R$\+module and $\overline B\cong B/sB$.

 By assumption, $B$ is an $S$\+strongly flat $R$\+module.
 By Lemma~\ref{L:S-strongly}~(2), it follows that $\overline B$ is
a projective $R/sR$\+module.
\end{proof}

 We now give the following characterization of $S$\+almost perfect rings.

\begin{thm}\label{T:characterization}
 Let $R$ be a commutative ring and $S$ a multiplicative subset of~$R$.
 Then the following are equivalent:
\begin{enumerate}
\item $R$ is $S$\+almost perfect. 
\item Every flat $R$\+module is $S$\+strongly flat.
\item Every $S$\+weakly cotorsion $R$\+module is Enochs cotorsion.
\item $R$ is $S$\+h-local and $S$\+semiartian, and $R_S$ is perfect.
\item The class of $S$\+strongly flat $R$\+modules is a covering class.
\end{enumerate}
\end{thm}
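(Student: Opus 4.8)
The plan is to build everything around the two implications $(1)\Rightarrow(2)$ and $(2)\Rightarrow(1)$, which are already available as Proposition~\ref{S-almost-perfect-implies} and Lemma~\ref{S-almost-perfect-implied-by}. Concretely, I would close the cycle $(1)\Rightarrow(2)\Rightarrow(5)\Rightarrow(1)$ and then attach the two lateral equivalences $(2)\Leftrightarrow(3)$ and $(1)\Leftrightarrow(4)$. Since $(1)$ and $(2)$ are already known to coincide, it is enough to splice each of $(3)$, $(4)$, $(5)$ into the chain by one implication in each direction.

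First I would dispose of $(2)\Leftrightarrow(3)$, which is purely formal: unwinding the definitions, condition $(2)$ asserts that $\Ext^1_R(F,C)=0$ whenever $F$ is flat and $C$ is $S$\+weakly cotorsion, while condition $(3)$ asserts exactly the same vanishing with the two quantifiers interchanged. Next, for $(2)\Rightarrow(5)$, I would note that under $(2)$ the class of $S$\+strongly flat modules coincides with the class $\F_0$ of all flat $R$\+modules (the inclusion $S$\+$\SF\subseteq\F_0$ is recorded after Lemma~\ref{L:SF}); since the class of flat modules is covering by the Flat Cover Theorem~\cite{GT}, condition $(5)$ follows at once. For $(5)\Rightarrow(1)$, I would feed the hypothesis into the two covering lemmas of Section~\ref{strongly-flat-secn}: every $R_S$\+module and every $R/sR$\+module is in particular an $R$\+module, so if all $R$\+modules admit $S$\+strongly flat covers, then Lemma~\ref{L:R_S-modules} yields that $R_S$ is perfect and Lemma~\ref{L:R/sR-modules} yields that $R/sR$ is perfect for every $s\in S$, which is precisely $(1)$.

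The remaining equivalence $(1)\Leftrightarrow(4)$ is where the genuine ring theory sits, and I expect it to be the main obstacle. For $(1)\Rightarrow(4)$: by Lemma~\ref{L:perfect-Matlis} the perfectness of all $R/sR$ already forces $R$ to be $S$\+h-nil, hence $S$\+h-local; and $S$\+semiartinianness follows because any ideal $\aaa$ with $\aaa\cap S\ne\varnothing$ contains some $s\in S$, so $R/\aaa$ is a quotient of the perfect (hence semiartinian, by Lemma~\ref{L:perfect}(4)) ring $R/sR$, and therefore contains a simple module whenever it is nonzero. For the converse $(4)\Rightarrow(1)$, I would use that over an $S$\+h-local ring each $R/sR$ is a finite product of local rings (as recorded in the proof of Proposition~\ref{P:h-local}); the $S$\+semiartinian hypothesis makes every nonzero quotient of $R/sR$ contain a simple module, so $R/sR$ is semiartinian, whence each of its local factors is local and semiartinian, hence perfect by Lemma~\ref{L:perfect}(4). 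A finite product of perfect rings being perfect, this gives $R/sR$ perfect for every $s$, and together with the standing hypothesis that $R_S$ is perfect we recover $(1)$.

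The delicate point to get right is precisely this last passage, inside $(4)\Rightarrow(1)$, from ``$R/sR$ is semiartinian and a finite product of local rings'' to ``$R/sR$ is perfect'': one must verify that semiartinianness descends to each local factor (it does, being inherited by quotients) and then invoke Lemma~\ref{L:perfect}(4) factor by factor. Everything else is a matter of quoting the already-established results in the right order, since the heavy homological input---the bound $\operatorname{p.dim}_R R_S\le1$ for $S$\+h-nil rings, the contramodule vanishing of Proposition~\ref{t-contramodules-relatively-cotorsion}, and the product decomposition of $S$\+contramodules---has all been absorbed into Proposition~\ref{S-almost-perfect-implies} and Lemma~\ref{S-almost-perfect-implied-by}, so no new machinery of that kind is needed here.
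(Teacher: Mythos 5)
Your proposal is correct and follows essentially the same route as the paper's proof: the same cycle $(1)\Leftrightarrow(2)$ via Proposition~\ref{S-almost-perfect-implies} and Lemma~\ref{S-almost-perfect-implied-by}, the formal quantifier-swap for $(2)\Leftrightarrow(3)$, the Flat Cover Theorem for $(2)\Rightarrow(5)$, Lemmas~\ref{L:R_S-modules} and~\ref{L:R/sR-modules} for $(5)\Rightarrow(1)$, and Lemmas~\ref{L:perfect-Matlis} and~\ref{L:perfect}~(4) for $(1)\Leftrightarrow(4)$. Your expanded treatment of $(1)\Leftrightarrow(4)$ --- in particular reducing $(4)\Rightarrow(1)$ to the fact that $R/sR$ is a semiartinian finite product of local rings --- is a correct filling-in of what the paper dispatches in one line.
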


\begin{proof} 
(1) $\Rightarrow$ (2) is Proposition~\ref{S-almost-perfect-implies};
(2) $\Rightarrow$ (1) is Lemma~\ref{S-almost-perfect-implied-by}.

(2) $\Leftrightarrow$ (3) Follows immediately by the definitions.

(1) $\Leftrightarrow$ (4) Follows from the definitions,
Lemmas~\ref{L:perfect-Matlis} and~\ref{L:perfect}~(4).

(2) $\Rightarrow$ (5) Clear, since the class of flat modules is a covering class.

(5) $\Rightarrow$ (1) Follows by Lemmas~\ref{L:R_S-modules}
and~\ref{L:R/sR-modules}.
\end{proof}

\begin{expl}
 Let $R$ be a Noetherian commutative ring of Krull dimension not
exceeding~$1$.
 Choose a set $P_0$ of minimal prime ideals of $R$ such that all
the prime ideals of $R$ not belonging to $P_0$ are maximal.
 For example, one take $P_0$ to be the set of all minimal prime
ideals in~$R$ \cite[Section~13]{Pcta}.
 Put $S=R\setminus\bigcup\limits_{\p\in P_0}\p$.
 Then the ring $R_S$ is a Noetherian commutative ring of Krull
dimension~$0$, and so is the ring $R/sR$ for all $s\in S$.
 Hence these are Artinian rings, and consequently, perfect rings.
 Therefore, the ring $R$ is $S$\+almost perfect.

 Notice that the classical ring of quotients of the ring $R$ fails
to be Artinian for some Noetherian commutative rings $R$ of
Krull dimension~$1$ containing nilpotent elements.
 In this case, the ring $R$ is \emph{not} almost perfect in the sense
of the paper~\cite{FS}.
 Still, it is $S$\+almost perfect for the above-described multiplicative
subset $S\subset R$ containing some zero-divisors.

 For example, this happens for the ring $R=k[x,y]/(xy,y^2)$,
where $k$~is a field.
 The multiplicative subset $S$ contains the zero-divisor $x\in R$
in this case.
\end{expl}

\begin{rem}
 Conversely, let $R$ be an $S$\+almost perfect ring and
$\p\varsubsetneq\q$ be two prime ideals of~$R$.
 Then the ring $R_S$, being a perfect ring, has Krull dimension~$0$.
 Hence the ideal~$\q$ must intersect~$S$.
 Let $s\in\q\cap S$ be an element of the intersection.
 Since the ring $R/sR$ is also perfect, and therefore has Krull
dimension~$0$, the element~$s$ cannot belong to~$\p$.
 Thus we have $\p\cap S=\varnothing$.
 It follows that there \emph{cannot} be three embedded prime ideals
$\p\varsubsetneq\q\varsubsetneq\m$ in $R$, so the Krull dimension
of $R$ does not exceed~$1$.
\end{rem}

\begin{expl}
 Let $K$ be a field, and let $\bbN$ denote the set of all positive
integers.
 Consider the ring $K^\bbN$ (the product of $\bbN$ copies of~$K$) and
denote by $\ep$ its unit element.
 Consider the subring
$$
 R = K^{(\bbN)}+\ep K\subset K^\bbN.
$$

 Then $R$ consists of all the elements of the direct product $K^\bbN$
of the form
$$
 R=\{(x_1,x_2,\dotsc,x_n,x,x,x,\dotsc)\mid \exists n\in\bbN;
 \ x_i,x\in K\}.
$$
 The ring $R$ is a Von Neumann regular commutative ring, so every
localization of $R$ at a maximal (\,$=$~prime) ideal is a field.
 In particular, the Krull dimension of $R$ is equal to~$0$.

 The ideal $\m_0=K^{(\bbN)}\subset R$ is a maximal ideal of $R$ and
$R/\m_0\cong K$ as shown by the epimorphism $R\to K$;
$(x_1,x_2,\dotsc,x_n,x,x,x,\dotsc)\mapsto x$.
 The other maximal ideals of $R$ are the ideals $\m_i=R\cap
\prod\limits_{j\in\bbN\setminus\{i\}}K$ for every $i\in\bbN$.

 Set $S=R\setminus\m_0$.
 Then $R_S\cong K$ is a field.
 For every element $s\in S$, there is $n\in\bbN$ such that
$s=(x_1,x_2,\dotsc,x_n,x,x,x,\dotsc)$ with $x\ne0$.
 Hence $s$~belongs, at most, to the $n$~maximal ideals
$\m_i$, $i=1$,~\dots,~$n$.

 So the ring $R/sR$ is semilocal of Krull dimension~$0$ and
its localizations at its maximal ideals are fields.
 In fact, $R/sR$ is the direct product of a finite number of copies
of~$K$.
 We conclude that $R$ is an $S$\+almost perfect ring.
\end{expl}

 The next proposition generalizes the equivalence
(1) $\Leftrightarrow$ (2) in Theorem~\ref{T:characterization}.
 It also means that~\cite[Optimistic Conjecture~1.1]{PSl2} holds for
$S$\+h-nil rings (cf.~\cite[Theorems~1.3--1.5]{PSl2}).

\begin{prop} \label{P:S-strongly-flat-over-S-h-nil}
 Let $R$ be an $S$\+h-nil ring and $F$ be a flat $R$\+module.
 Then the following are equivalent:
\begin{enumerate}
\item The $R$\+module $F$ is $S$\+strongly flat.
\item The $R_S$\+module $F\otimes_RR_S$ is projective, and
      the $R/sR$\+module $F/sF$ is projective for every $s\in S$.
\end{enumerate}
\end{prop}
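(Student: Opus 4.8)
The plan is to establish the two implications separately, the substance being concentrated in (2) $\Rightarrow$ (1). The implication (1) $\Rightarrow$ (2) is immediate: if $F$ is $S$\+strongly flat, then $F\otimes_RR_S$ is a projective $R_S$\+module and $F/sF$ is a projective $R/sR$\+module for every $s\in S$ by parts~(1) and~(2) of Lemma~\ref{L:S-strongly}.

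For (2) $\Rightarrow$ (1), I would first invoke Theorem~\ref{T:S-h-nil-is-Matlis}, which applies since $R$ is $S$\+h-nil, to obtain $\operatorname{p.dim}_RR_S\le1$. Combined with the hypothesis that $F\otimes_RR_S$ is a projective $R_S$\+module, this places us in the setting of Lemma~\ref{L:1}, reducing the goal to showing that $\Ext^1_R(F,P)=0=\Ext^2_R(F,P)$ for every $S$\+contramodule $R$\+module $P$. The next step is to decompose such a $P$: because $R$ is $S$\+h-nil, Corollary~\ref{S-contra-m-contra-product-decomp} gives an isomorphism $P\cong\prod_\m P(\m)$, a product of $\m$\+contramodule $R$\+modules indexed by the maximal ideals $\m$ with $\m\cap S\ne\varnothing$. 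Since $\Ext$ commutes with products in its second argument (products are exact in $\Modl R$), one has $\Ext^i_R(F,P)\cong\prod_\m\Ext^i_R(F,P(\m))$, so it suffices to treat each factor.

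The essential local step is the vanishing $\Ext^i_R(F,P(\m))=0$ for $i\ge1$. Fixing $\m$, I would choose an element $s\in\m\cap S$; then $P(\m)$, being an $\m$\+contramodule, is in particular an $s$\+contramodule. As $F$ is flat and $F/sF$ is projective over $R/sR$ by hypothesis, Proposition~\ref{t-contramodules-relatively-cotorsion} applies with $t=s$ and yields $\Ext^i_R(F,P(\m))=0$ for all $i\ge1$, in particular for $i=1$ and $i=2$. Reassembling the factors gives $\Ext^1_R(F,P)=0=\Ext^2_R(F,P)$, and Lemma~\ref{L:1} then delivers the $S$\+strong flatness of $F$. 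The hard part is really packaged into the preparatory results invoked here: the nontrivial inputs are the Matlis-type bound $\operatorname{p.dim}_RR_S\le1$ of Theorem~\ref{T:S-h-nil-is-Matlis} and the product decomposition of $S$\+contramodules of Corollary~\ref{S-contra-m-contra-product-decomp}, both resting on the $S$\+h-nil hypothesis. Once these are available, the argument is a clean reduction of the global vanishing to the principal-ideal case treated by Proposition~\ref{t-contramodules-relatively-cotorsion}; note that, in contrast to Corollary~\ref{S-contramod-Enochs-cotorsion}, one only needs the projectivity of $F/sF$ for the given $F$ rather than perfectness of $R/sR$.
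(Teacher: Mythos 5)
Your proof is correct and follows essentially the same route as the paper's own argument: the reduction via Theorem~\ref{T:S-h-nil-is-Matlis} and Lemma~\ref{L:1}, the product decomposition of $S$\+contramodules from Corollary~\ref{S-contra-m-contra-product-decomp}, and the factor-wise vanishing from Proposition~\ref{t-contramodules-relatively-cotorsion} are exactly the paper's steps. Your explicit remark that $\Ext$ commutes with products in the second argument, and your closing observation contrasting this with Corollary~\ref{S-contramod-Enochs-cotorsion}, are correct refinements of details the paper leaves implicit.
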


\begin{proof}
 (1) $\Rightarrow$ (2) Follows by Lemma~\ref{L:S-strongly}~(1--2).

 (2) $\Rightarrow$ (1) By Theorem~\ref{T:S-h-nil-is-Matlis}, we have
$\operatorname{p.dim}_RR_S\le1$.
 So Lemma~\ref{L:1} applies, and it suffices to show that
$\Ext_R^1(F,P)=0=\Ext_R^2(F,P)$ for any $S$\+contramodule
$R$\+module~$P$.
 By Corollary~\ref{S-contra-m-contra-product-decomp}, $P$ can be
decomposed into a direct product,
$P\cong\prod\limits_{\m\in\Max R;\,\m\cap S\ne\varnothing} P(\m)$,
where $P(\m)$ are $\m$\+contramodule $R$\+modules.
 Let $s$ be an element of the intersection $\m\cap S$; then $P(\m)$
is an $s$\+contramodule.
 By Proposition~\ref{t-contramodules-relatively-cotorsion}, we have
$\Ext_R^i(F,P(\m))=0$ for all $i\ge1$.
 Hence also $\Ext_R^i(F,P)=0$.
\end{proof}

%\CHB{Moved to Section 8: \\
%\indent The following lemma will be useful in the next section.
%
%
%\begin{lem} %\label{L:F_1-perp}
% Assume that $R$ is $S$\+almost perfect.
% Then, for every $n\geq1$, $\Ext^n_R(M, D)=0$ for every $M\in\F_1$
%and every $S$\+divisible $R$\+module $D$.
%\end{lem}
%
%
%\begin{proof}
% Let $M\in\F_1$ and let $Y$ be an $R_S$\+module.
% Then, for every $n\geq 1$,
%$\Ext^n_R(M, Y)\cong\Ext^n_{R_S}(M\otimes R_S, Y)$ and the last term is
%zero since $R_S$ is a perfect ring.
% By Lemma~\ref{L:perfect-Matlis}, $\operatorname{p.dim}R_S\leq 1$.
% Let now $D$ be an $S$\+divisible $R$\+module.
% By Corollary~\ref{S-h-nil-S-h-divisible}, there is a short exact %sequence:
%\[
% 0\la \Hom_R(R_S/\phi(R),D)\la \Hom_R(R_S,D)\la D\la 0,
%\]
%where $\Hom_R(R_S,D)$ is an $R_S$\+module and $\Hom_R(R_S/\phi(R),D)$
%is $S$\+reduced and $S$\+weakly cotorsion, by
%Lemma~\ref{L:torsion-Hom}~(1).
% Then, for every $n\geq 1$, we get the exact sequence:
%\begin{multline*}
% 0=\Ext^n_R(M,\Hom_R(R_S,D))\la \Ext^n_R(M,D) \\
% \la\Ext^{n+1}_R(M,\Hom_R(R_S/\phi(R),D))\la\dotsb
%\end{multline*}
%and the last Ext term is zero, since $M\in\F_1$ and,
%by Theorem~\ref{T:characterization}~(3),
%$\Hom_R(R_S/\phi(R),D)$ is Enochs cotorsion. 
%Hence we conclude.
%%(To show the inclusion we could have alternatively used \cite[Corollary 17]{Pos2}.)
%\end{proof}}
%

\section{The condition $\clP_1=\F_1$} \label{P1=F1-secn}

%\CHG{ The setting is again as in the previous sections: $S$ is
%a multiplicative subset of a commutative ring $R$,
%\,$\phi\colon R\to R_S$ is the localization map, and $I\subset R$
%is the kernel of~$\phi$.}

 In this section we prove that if $R$ is an $S$\+almost perfect ring then every module with flat dimension $\leq 1$ has projective dimension at most one, and show that this latter condition is also sufficient provided that the multiplicative subset $S$ consists of regular elements.

%\begin{lem} \label{s-contra-enochs-cotorsion}
% Let $R$ be a commutative ring and $s\in R$ an element such that
%the ring $R/sR$ is perfect.
% Then every $s$\+contramodule $R$\+module is Enochs cotorsion.
%\end{lem}
%
%\begin{proof}
% Follows from Proposition~\ref{t-contramodules-relatively-cotorsion}.
%\end{proof}

\begin{lem} \label{L:F_1-perp}
 Assume that $R$ is $S$\+almost perfect.
 Then, for every $n\geq1$, $\Ext^n_R(M, D)=0$ for every $M\in\F_1$
and every $S$\+divisible $R$\+module $D$.
\end{lem}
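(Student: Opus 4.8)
The plan is to reduce everything to the fact that, over the perfect ring $R_S$, a module of \emph{finite} projective dimension is already projective. Since $R$ is $S$\+almost perfect, Lemma~\ref{L:perfect-Matlis} gives $\operatorname{p.dim}_RR_S\le1$, so by Proposition~\ref{P:Matlis}~(2) (or Corollary~\ref{S-h-nil-S-h-divisible}) the module $D$ is not merely $S$\+divisible but $S$\+h-divisible. Hence $D$ is an epimorphic image of a free $R_S$\+module, and I would fix a short exact sequence of $R$\+modules $0\to N\to R_S^{(\alpha)}\to D\to 0$; since $N$ is a submodule of the $R_S$\+module $R_S^{(\alpha)}$, both $N$ and $R_S^{(\alpha)}$ are $R_S$\+modules.

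The crucial step is to show that $M\otimes_RR_S$ is a \emph{projective} $R_S$\+module. First I would tensor a flat resolution $0\to F_1\to F_0\to M\to 0$ with the flat $R$\+module $R_S$, obtaining a flat resolution of $M\otimes_RR_S$ of length one over $R_S$; so $M\otimes_RR_S$ has flat dimension at most~$1$ over $R_S$. Because $R_S$ is perfect, the terms of this resolution are projective, whence $M\otimes_RR_S$ has finite projective dimension over $R_S$; and as $\operatorname{FPdim}R_S=0$ by Lemma~\ref{L:perfect}~(2), it follows that $M\otimes_RR_S$ is projective. Now, since $R\to R_S$ is a flat ring epimorphism, for any $R_S$\+module $Y$ there is a natural isomorphism $\Ext_R^n(M,Y)\cong\Ext_{R_S}^n(M\otimes_RR_S,Y)$ in every degree (the computation used in the proof of Lemma~\ref{L:annihil}~(2), read in all degrees). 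With $M\otimes_RR_S$ projective over $R_S$, this yields $\Ext_R^n(M,Y)=0$ for all $n\ge1$ and every $R_S$\+module~$Y$.

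Finally I would feed this vanishing into the long exact sequence of $\Ext_R^\bullet(M,{-})$ associated with $0\to N\to R_S^{(\alpha)}\to D\to 0$. For $n\ge1$ the module $\Ext_R^n(M,D)$ sits between $\Ext_R^n(M,R_S^{(\alpha)})=0$ and $\Ext_R^{n+1}(M,N)=0$, both of which vanish by the previous paragraph, since $R_S^{(\alpha)}$ and $N$ are $R_S$\+modules and $n,\,n+1\ge1$. Hence $\Ext_R^n(M,D)=0$ for all $n\ge1$, as desired.

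The point that really carries the argument, and where the hypotheses are essential, is the upgrade from ``flat dimension $\le1$'' to ``projective'' for $M\otimes_RR_S$: a perfect ring need not have finite weak global dimension, so this is not a dimension-shifting triviality but rests squarely on $\operatorname{FPdim}R_S=0$. It is also exactly what makes the case $n=1$ go through, which a naive flat-resolution computation performed over $R$ itself would leave behind as an unresolved cokernel $\operatorname{Coker}\bigl(\Hom_R(F_0,D)\to\Hom_R(F_1,D)\bigr)$.
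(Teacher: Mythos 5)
Your first step is fine and is exactly the paper's first step: for $M\in\F_1$ one gets $\Ext^n_R(M,Y)\cong\Ext^n_{R_S}(M\otimes_RR_S,Y)=0$ for all $n\ge1$ and every $R_S$\+module $Y$, since $M\otimes_RR_S$ has finite projective dimension over the perfect ring $R_S$ and $\operatorname{FPdim}R_S=0$. The gap is the sentence ``since $N$ is a submodule of the $R_S$\+module $R_S^{(\alpha)}$, both $N$ and $R_S^{(\alpha)}$ are $R_S$\+modules.'' An $R$\+submodule of an $R_S$\+module need not be an $R_S$\+module: each $s\in S$ acts injectively on it, but not surjectively in general. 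In fact, for a short exact sequence $0\to N\to R_S^{(\alpha)}\to D\to0$ of $R$\+modules, $N$ is an $R_S$\+module if and only if $D$ is one, so no choice of presentation can repair this whenever $D$ has nonzero $S$\+torsion. A counterexample inside the hypotheses of the lemma: $R=\bbZ$, \,$S=\bbZ\setminus\{0\}$ (this ring is $S$\+almost perfect), $D=\bbQ/\bbZ$; the kernel of the canonical surjection $\bbQ\to\bbQ/\bbZ$ is $\bbZ$, which is not a $\bbQ$\+module. Consequently your final long exact sequence has no way to kill the term $\Ext^{n+1}_R(M,N)$, and the proof collapses precisely at the point you flagged as the residual difficulty. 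A further warning sign: your argument would use the perfectness of the rings $R/sR$ only through $S$\+h-divisibility and $\operatorname{p.dim}_RR_S\le1$, whereas that hypothesis must enter the Ext-vanishing itself.

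The missing idea, which is how the paper proceeds, is to use the canonical exact sequence $0\to\Hom_R(R_S/\phi(R),D)\to\Hom_R(R_S,D)\to D\to0$ (surjectivity on the right is what $S$\+h-divisibility buys). Its middle term genuinely is an $R_S$\+module, so your step one applies to it; but its kernel is not an $R_S$\+module. Instead, being $\Hom$ from the $S$\+torsion module $R_S/\phi(R)$, the kernel is an $S$\+contramodule by Lemma~\ref{L:torsion-Hom}~(1), and therefore Enochs cotorsion by Corollary~\ref{S-contramod-Enochs-cotorsion} (equivalently, Theorem~\ref{T:characterization}~(3)) --- this is where the hypothesis that $R/sR$ is perfect for every $s\in S$ does its real work, via Proposition~\ref{t-contramodules-relatively-cotorsion}. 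Then $\Ext^{n+1}_R(M,\Hom_R(R_S/\phi(R),D))=0$ for $M\in\F_1$ and $n\ge1$, by dimension shifting along a flat resolution $0\to F_1\to F_0\to M\to0$ and hereditariness of the flat cotorsion pair. If you replace your presentation of $D$ by this canonical sequence and supply the contramodule/cotorsion argument for its kernel, your proof closes and becomes the paper's proof.
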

\begin{proof}
 Let $M\in\F_1$ and let $Y$ be an $R_S$\+module.
 Then, for every $n\geq 1$,
$\Ext^n_R(M, Y)\cong\Ext^n_{R_S}(M\otimes R_S, Y)$ and the last term is
zero since $R_S$ is a perfect ring.
 By Lemma~\ref{L:perfect-Matlis}, $\operatorname{p.dim}R_S\leq 1$.
 Let now $D$ be an $S$\+divisible $R$\+module.
 By Corollary~\ref{S-h-nil-S-h-divisible}, there is a short exact sequence:
\[
 0\la \Hom_R(R_S/\phi(R),D)\la \Hom_R(R_S,D)\la D\la 0,
\]
where $\Hom_R(R_S,D)$ is an $R_S$\+module and $\Hom_R(R_S/\phi(R),D)$
is $S$\+reduced and $S$\+weakly cotorsion, by
Lemma~\ref{L:torsion-Hom}~(1).
 Then, for every $n\geq 1$, we get the exact sequence:
\begin{multline*}
 0=\Ext^n_R(M,\Hom_R(R_S,D))\la \Ext^n_R(M,D) \\
 \la\Ext^{n+1}_R(M,\Hom_R(R_S/\phi(R),D))\la\dotsb
\end{multline*}
and the last Ext term is zero, since $M\in\F_1$ and,
by Theorem~\ref{T:characterization}~(3),
$\Hom_R(R_S/\phi(R),D)$ is Enochs cotorsion. 
Hence we conclude.
%(To show the inclusion we could have alternatively used \cite[Corollary 17]{Pos2}.)
\end{proof}

We now can show that $\clP_1=\F_1$ whenever $R$ is an $S$\+almost perfect ring.

\begin{prop}\label{P:P_1=F_1}
 Assume that $R$ is an $S$-almost perfect ring.
 Then $\clP_1=\F_1$ and consequently $\clP_n=\F_n$
for every $n\geq 1$.
\end{prop}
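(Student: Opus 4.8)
The inclusion $\clP_1\subseteq\F_1$ is automatic, so everything reduces to proving $\F_1\subseteq\clP_1$; the concluding equality $\clP_n=\F_n$ for all $n\ge1$ will then follow by the standard dimension-shifting induction (given $M\in\F_n$ with $n\ge2$, pick $0\to K\to F\to M\to 0$ with $F$ free, note $K\in\F_{n-1}$, and apply the inductive hypothesis to get $\operatorname{p.dim}_RM\le\operatorname{p.dim}_RK+1\le n$). So I would fix $M\in\F_1$ with a flat resolution $0\to F_1\to F_0\to M\to 0$ and aim to show that $\Ext^2_R(M,N)=0$ for every $R$\+module $N$, which is exactly $\operatorname{p.dim}_RM\le1$.

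First I would collect the homological consequences of $S$\+almost perfectness. By Lemma~\ref{L:perfect-Matlis} the ring $R$ is $S$\+h-nil and $\operatorname{p.dim}_RR_S\le1$, and by Proposition~\ref{S-almost-perfect-implies} every flat module is $S$\+strongly flat, hence of projective dimension at most $1$. Feeding the resolution of $M$ into $\Ext_R(-,C)$ then shows that $\Ext^2_R(M,C)=0$ for \emph{every} Enochs cotorsion module $C$: the term $\Ext^1_R(F_1,C)$ vanishes because $F_1$ is flat and $C$ is Enochs cotorsion, while $\Ext^2_R(F_0,C)$ vanishes because $\operatorname{p.dim}_RF_0\le1$. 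In particular $\Ext^2_R(M,P)=0$ for every $S$\+contramodule $P$, by Corollary~\ref{S-contramod-Enochs-cotorsion}. On the other side of the ledger, Lemma~\ref{L:F_1-perp} already gives $\Ext^i_R(M,D)=0$ for all $i\ge1$ whenever $D$ is $S$\+divisible.

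The core of the argument is to interpolate an arbitrary $N$ between these two computable situations by means of the contramodule completion $\Delta(N)=\Ext^1_R(K^\bullet,N)$. Writing $h(N)$ for the maximal $S$\+h-divisible submodule of $N$, i.e.\ the image of $\Hom_R(R_S,N)\to N$, the exact sequence $(\ast\ast)$ of Section~\ref{S-h-local-secn} breaks into $0\to h(N)\to N\to N/h(N)\to0$ and $0\to N/h(N)\to\Delta(N)\to\Ext^1_R(R_S,N)\to0$. Here $h(N)$ is $S$\+h-divisible (hence $S$\+divisible), the cokernel $\Ext^1_R(R_S,N)$ is an $R_S$\+module and therefore $S$\+divisible, and $\Delta(N)$ is an $S$\+contramodule by Lemma~\ref{S-contra-Delta}. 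Applying $\Ext_R(M,-)$ and using the previous paragraph, I would chase: the first sequence gives $\Ext^1_R(M,h(N))=0=\Ext^2_R(M,h(N))$, whence $\Ext^2_R(M,N)\cong\Ext^2_R(M,N/h(N))$; the second gives $\Ext^1_R(M,\Ext^1_R(R_S,N))=0$, whence an injection $\Ext^2_R(M,N/h(N))\hookrightarrow\Ext^2_R(M,\Delta(N))$; and the latter group is $0$ since $\Delta(N)$ is an $S$\+contramodule. Thus $\Ext^2_R(M,N)=0$, completing the proof.

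The diagram chases and the induction are routine; the step I expect to carry the real weight is the sandwiching of $N/h(N)$ inside the $S$\+contramodule $\Delta(N)$ with $S$\+divisible cokernel. This depends essentially on the bound $\operatorname{p.dim}_RR_S\le1$ from Theorem~\ref{T:S-h-nil-is-Matlis}, both to ensure that $\Delta(N)$ is genuinely an $S$\+contramodule and that flat modules (in particular $F_0$) have projective dimension at most $1$. It is exactly this input that upgrades the naive estimate $\operatorname{p.dim}_RM\le2$ read off from the flat resolution to the sharp bound $\operatorname{p.dim}_RM\le1$; without it the whole interpolation collapses.
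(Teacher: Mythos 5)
Your argument is correct, but it takes a genuinely different route from the paper's. The paper first reduces $\F_1\subseteq\clP_1$ to showing $\Ext^1_R(M,C)=0$ for every \emph{$S$-reduced cosyzygy} module $C$ (stripping off the maximal $S$-divisible submodule via Lemma~\ref{L:F_1-perp}, just as you strip off $h(N)$), and then uses a pushout construction: presenting $M$ as $0\to F\to R^{(\alpha)}\to M\to 0$ with $F$ flat, hence $S$-strongly flat, it invokes Lemma~\ref{L:SF} to enlarge $F$ to $G=F\oplus F_1$ sitting in an (SF)-sequence, and the pushout yields $0\to R_S^{(\gamma)}\to P\to M\to 0$ with $\operatorname{p.dim}_RP\le1$; then $\Ext^1_R(M,C)$ is trapped between $\Hom_R(R_S^{(\gamma)},C)=0$ and $\Ext^1_R(P,C)=0$. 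You instead prove $\Ext^2_R(M,N)=0$ against \emph{all} modules $N$ by sandwiching $N$ between the $S$-divisible modules $h(N)$ and $\Ext^1_R(R_S,N)$ and the $S$-contramodule $\Delta(N)$, and then quoting Corollary~\ref{S-contramod-Enochs-cotorsion}; in effect you extend the mechanism of Lemma~\ref{L:1} from flat modules to modules in $\F_1$. Both proofs share Lemma~\ref{L:F_1-perp} as the common ingredient; the paper's pushout trick keeps the contramodule machinery out of this particular argument, while your version dispenses with the cosyzygy reduction and makes the structural reason for the vanishing more explicit, at the price of leaning harder on Corollary~\ref{S-contramod-Enochs-cotorsion}.

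One citation needs repair. Lemma~\ref{S-contra-Delta} gives that $\Delta(M)$ is an $S$-contramodule only for \emph{$S$-h-reduced} modules $M$, so it does not literally apply to your arbitrary $N$. The fact you need is nevertheless true and available: since $\operatorname{p.dim}_RR_S\le1$ here, the functor $\Delta$ is left adjoint to the inclusion of $S$-contramodules into $\Modl R$ (this is quoted in the proof of Lemma~\ref{t-contramodules-described}, with reference to~\cite{Pos}), so $\Delta(N)$ is an $S$-contramodule for every $N$. Alternatively, the gap can be patched internally: under $\operatorname{p.dim}_RR_S\le1$ one has $S$-divisible $=$ $S$-h-divisible (Proposition~\ref{P:Matlis}(2)), so $h(N)$ is the maximal $S$-divisible submodule and $N/h(N)$ is $S$-h-reduced (extensions of $S$-divisible modules are $S$-divisible); moreover $\Delta(h(N))=0$ (it is $S$-h-divisible and $S$-weakly cotorsion by Lemma~\ref{L:annihil}(1)) and $\Ext^2_R(K^\bullet,h(N))\cong\Ext^2_R(R_S,h(N))=0$ by Lemma~\ref{ext-from-K}(3), whence $\Delta(N)\cong\Delta(N/h(N))$ and Lemma~\ref{S-contra-Delta} applies to $N/h(N)$. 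With either fix, your chase goes through unchanged.
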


\begin{proof}
 $\F_1$ is equal to $\clP_1$
% if and only if $\F_1^{\perp_1} $ is closed under  epimorphic images,
if and only if $\F_1^{\perp_1} $ contains the cosyzygy modules, i.e.,
the epimorphic images of injective modules.
%By \cite[Corollary 15]{P}, p.dim$R_S\leq 1$, hence  every cosyzygy
%module is $S$-weakly cotorsion.
% Thus, to conclude it is enough to show that $\F_1^{\perp_1}$
%contains the class of $S$-weakly cotorsion $R$-modules.
  Let $M\in \F_1$. By Lemma~\ref{L:F_1-perp}, $\Ext^n_R(M, D)=0$
for every $n\geq 1$ and every $S$\+divisible module $D$.
 Let now $C$ be a cosyzygy module and let $d(C)$ be its maximal
$S$\+divisible submodule.
 From the exact sequence $0\to d(C)\to C\to C/d(C)\to0 $ and from
the above remark, we see that it is enough to show that
$\Ext^1_R(M, C)=0$ for every $S$\+reduced cosyzygy module~$C$.

  Let $0\to F\to R^{(\alpha)}\to M\to 0$ be a presentation of $M$
with $F$ a flat module.
 By Theorem~\ref{T:characterization} and Lemma~\ref{L:SF},
there is a flat module $F_1$ such that $G=F\oplus F_1$ fits in
an exact sequence  of the form 
$0\to R^{(\beta)}\to G\overset{\pi}\to R_S^{(\gamma)}\to 0$
for some cardinals $\beta$, $\gamma$
and thus for $M$ we can consider the flat presentation
$0\to G\to R^{(\alpha)}\oplus F_1\to M\to 0$.
 Form the pushout diagram
\[\begin{CD}
@. 0@. 0\\
@. @VVV @VVV\\
@. R^{(\beta)}@= R^{(\beta)}\\
@. @VVV @VVV\\
0@>>>G@>>> R^{(\alpha)}\oplus F_1@>>>M@>>> 0\\
@. @VVV @VVV@| \\
0 @>>> R_S^{(\gamma)}@>>> P @>>> M @>>> 0 \\
@. @VVV @VVV\\
@.  0 @. 0
\end{CD}\]
 Then $P$ has projective dimension at most one, since
$\operatorname{p.dim}_RF_1\le1$ (as
$\operatorname{p.dim}_RR_S\le1$ implies that $S$\+$\SF\subset\clP_1$).
 Hence $\Ext^1_R(P, C)=0$ for every cosyzygy module $C$, and if
$C$ is moreover $S$\+reduced, then $\Hom_R(R_S^{(\gamma)}, C)=0$,
showing that $\Ext^1_R(M, C)=0$.
\end{proof}

% The following properties might be of some use.
\begin{lem}\label{L:F_1-R/I}
 For every $s\in S$, let\/ $\Ann(s)\subset R$ be the annihilator of~$s$
in $R$ and $M[s]\subset M$ be the annihilator of~$s$ in a module~$M$. 
 As above, we denote by $I$ the kernel of the localization
map $R\to R_S$.
 Then, for every $R$\+module $M$, the following hold true:
\begin{enumerate}
\item $\Tor^R_1(R/sR, M)=\frac{M[s]}{\Ann (s)M}$.
% \item $\Tor^R_1(R/I, M)\cong \varinjlim_{s\in S} \Tor^R_2(R/sR, M)$.
\item If $M\in \F_1$, then $\Tor^R_m(R/I, M)=0$ for every $m\geq 1$.
%hence, $\Ext^n_R(M, X)\cong \Ext_{R/I}^n(M/IM, X)$ for every $n\geq 1$. 
In particular, if $M\in\clP_1$, then $M/IM\in \clP_1(R/I).$
\end{enumerate}
\end{lem}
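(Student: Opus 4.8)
The plan is to handle the two parts separately: part~(1) by a direct $\Tor$ computation, and part~(2) by a degree-shifting argument that exploits the flatness of~$R_S$.

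For part~(1), I would start from the short exact sequence $0\la sR\la R\la R/sR\la0$ and apply $-\otimes_R M$. Since $R$ is flat, $\Tor_1^R(R,M)=0$, so the associated long exact sequence identifies $\Tor_1^R(R/sR,M)$ with the kernel of the map $sR\otimes_R M\la M$ induced by the inclusion $sR\hookrightarrow R$. I would then invoke the isomorphism $sR\cong R/\Ann(s)$ given by $\bar r\mapsto sr$, which yields $sR\otimes_R M\cong M/\Ann(s)M$ and carries the above map to multiplication by~$s$, that is $M/\Ann(s)M\la M$, $\bar m\mapsto sm$. Its kernel is precisely $M[s]/\Ann(s)M$ (note that $\Ann(s)M\subseteq M[s]$, since $s(am)=(sa)m=0$ for $a\in\Ann(s)$), which is the claimed formula.

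For part~(2), the key idea is to trade one degree of $\Tor$ against the flatness of the localization. I would consider the short exact sequence $0\la R/I\la R_S\la R_S/\phi(R)\la0$, in which $R/I$ is identified with $\phi(R)\subseteq R_S$, and apply $-\otimes_R M$. Because $R_S$ is a flat $R$\+module, $\Tor_n^R(R_S,M)=0$ for all $n\ge1$, so the long exact sequence gives isomorphisms $\Tor_m^R(R/I,M)\cong\Tor_{m+1}^R(R_S/\phi(R),M)$ for every $m\ge1$. Now the hypothesis $M\in\F_1$ means exactly that $\Tor_n^R(N,M)=0$ for every $R$\+module $N$ and every $n\ge2$; applying this with $N=R_S/\phi(R)$ and $n=m+1\ge2$ kills the right-hand side, whence $\Tor_m^R(R/I,M)=0$ for all $m\ge1$.

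For the final ``in particular'' clause, I would note that $\clP_1\subseteq\F_1$, so part~(2) gives $\Tor_1^R(R/I,M)=0$ whenever $M\in\clP_1$. Choosing a length-one projective resolution $0\la P_1\la P_0\la M\la0$ over~$R$ and applying $(R/I)\otimes_R-$, the vanishing of $\Tor_1^R(R/I,M)$ makes $0\la P_1/IP_1\la P_0/IP_0\la M/IM\la0$ exact; since each $P_i/IP_i\cong(R/I)\otimes_R P_i$ is projective over $R/I$, this exhibits a projective $R/I$\+resolution of $M/IM$ of length~$1$, so $M/IM\in\clP_1(R/I)$. I do not anticipate a genuine obstacle: the only place demanding some care is the identification in part~(1) of the connecting map with multiplication by~$s$, while the conceptual heart of the lemma is the single degree-shift in part~(2).
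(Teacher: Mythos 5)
Your proof is correct. Part~(1) is in substance the paper's own argument: the paper starts directly from the short exact sequence $0\la R/\Ann(s)\overset{\dot{s}}\la R\la R/sR\la 0$, which is exactly your sequence $0\la sR\la R\la R/sR\la 0$ after your identification $sR\cong R/\Ann(s)$, and reads off the same kernel $M[s]/\Ann(s)M$. Part~(2), however, takes a genuinely different route. The paper stays with that same sequence: it deduces $\Tor^R_m(R/\Ann(s),M)\cong\Tor^R_{m+1}(R/sR,M)=0$ for $m\ge1$ from $M\in\F_1$, and then concludes using the fact that $\Tor$ commutes with direct limits, the point being that $I=\bigcup_{s\in S}\Ann(s)$ is a directed union, so $R/I$ is the direct limit of the modules $R/\Ann(s)$. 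You instead embed $R/I=\phi(R)$ into the flat module $R_S$ and dimension-shift along $0\la R/I\la R_S\la R_S/\phi(R)\la 0$, obtaining $\Tor^R_m(R/I,M)\cong\Tor^R_{m+1}(R_S/\phi(R),M)=0$ for $m\ge1$. Both arguments are valid and elementary; yours avoids the direct-limit step (and the implicit observation that the $\Ann(s)$ form a directed system with union~$I$) at the cost of invoking flatness of $R_S$, and it is consonant with how the paper uses the same short exact sequence elsewhere (e.g., in the proofs of Lemmas~\ref{L:annihil} and~\ref{L:S-torsion}). For the final clause, the paper cites the change-of-rings isomorphism $\Ext^n_R(M,X)\cong\Ext^n_{R/I}(M/IM,X)$ for $R/I$\+modules $X$, which follows from the $\Tor$\+vanishing, whereas you build the length-one projective $R/I$\+resolution $0\la P_1/IP_1\la P_0/IP_0\la M/IM\la 0$ explicitly; your construction is essentially the proof of that isomorphism specialized to the situation at hand, so the content is the same but your version is more self-contained.
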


\begin{proof}
(1) Consider the short exact sequence
\begin{equation}
 \tag{$\circ$} 0\la R/\Ann (s)\overset{\dot{s}}\la R\la R/sR\la 0,
\end{equation}
where $\dot{s}$ denotes the multiplication by~$s$.
 The conclusion follows by considering the exact sequence:
 \[0\la\Tor^R_1(R/sR, M)\la M/\Ann (s)M\overset{\dot{s}}\la M.\]
(2) From the exact sequence~($\circ$) we infer that
$\Tor^R_2(R/sR, M)\cong \Tor^R_1( R/\Ann (s), M)$. 
 Let $M\in\F_1$. Then $\Tor^R_m(R/I, M)=0$ for every $m\geq 1$,
since $\Tor$ commutes with direct limits. 
 Hence $\Ext^n_R(M, X)\cong \Ext_{R/I}^n(M/IM, X)$ for every
$n\geq 1$ and every $R/I$\+module $X$,
from which the last statement is immediate.
\end{proof}

\begin{lem}\label{L:Enochs-cot}
 Assume that $R_S$ is a perfect ring and that $\F_0\subseteq \clP_1$.
 Then, every $S$\+divisible $R$\+module is Enochs cotorsion.
\end{lem}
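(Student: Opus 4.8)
The plan is to deduce the statement from two results already established in the excerpt: Proposition~\ref{P:Matlis}~(2) and Lemma~\ref{L:annihil}~(2). The whole argument is a two-step reduction, and the one point worth flagging in advance is that the hypothesis $\F_0\subseteq\clP_1$ will be used twice, for two different purposes.

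First I would observe that $R_S$, being a localization of $R$, is a flat $R$\+module, so $R_S\in\F_0\subseteq\clP_1$; that is, $\operatorname{p.dim}_R R_S\le1$. This is precisely the hypothesis needed to invoke Proposition~\ref{P:Matlis}~(2). Given an arbitrary $S$\+divisible $R$\+module $D$, that proposition then upgrades $S$\+divisibility to $S$\+h-divisibility, so $D$ is $S$\+h-divisible. Since $R_S$ is assumed perfect, Lemma~\ref{L:annihil}~(2) now applies and gives $D\in\clP_1^{\perp_1}$, i.e.\ $\Ext^1_R(M,D)=0$ for every $R$\+module $M$ with $\operatorname{p.dim}_R M\le1$.

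Finally I would invoke $\F_0\subseteq\clP_1$ a second time: every flat $R$\+module $F$ then lies in $\clP_1$, whence $\Ext^1_R(F,D)=0$ for all flat $F$. By definition this says exactly that $D$ is Enochs cotorsion, completing the proof. There is no serious obstacle here; the only subtlety is the double role of $\F_0\subseteq\clP_1$, which both secures $\operatorname{p.dim}_R R_S\le1$ (feeding Proposition~\ref{P:Matlis}~(2)) and allows one to pass from membership in $\clP_1^{\perp_1}$ to cotorsion against the full class of flat modules.
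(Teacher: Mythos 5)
Your proposal is correct and follows exactly the paper's own argument: derive $\operatorname{p.dim}_R R_S\le1$ from the flatness of $R_S$ together with $\F_0\subseteq\clP_1$, apply Proposition~\ref{P:Matlis}~(2) to upgrade $S$\+divisibility to $S$\+h-divisibility, invoke Lemma~\ref{L:annihil}~(2) to get $D\in\clP_1^{\perp_1}$, and then use $\F_0\subseteq\clP_1$ once more to conclude Enochs cotorsion. The double use of the hypothesis that you flag is indeed the same double use present (implicitly) in the paper's proof.
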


\begin{proof}
 $R_S$ is a flat $R$-module, hence our assumption implies that
$\operatorname{p.dim}_RR_S\allowbreak\leq 1$.
 By Proposition~\ref{P:Matlis}, every $S$\+divisible module $D$ is
$S$\+h-divisible, and by Lemma~\ref{L:annihil}~(2)
$D\in \clP_1^{\perp_1}$.
 Since $\F_0\subseteq \clP_1$, we conclude that $D$ is Enochs
cotorsion.
\end{proof}

 We now consider the case when $S$ is a multiplicative
set of regular elements of~$R$.

\begin{prop}\label{P:regular}
 Assume that $S$ consists of regular elements.
 If $R_S$ is a perfect ring and $\clP_1=\F_1$, then $\clP_1^{\perp_1}$
coincides with the class of $S$\+divisible $R$\+modules.
\end{prop}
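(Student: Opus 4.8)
The plan is to establish the two inclusions of the asserted equality separately. Before starting, I would record one preliminary observation: since $R_S$ is a flat $R$\+module it lies in $\F_0\subseteq\F_1=\clP_1$, so the hypothesis $\clP_1=\F_1$ forces $\operatorname{p.dim}_RR_S\le1$; and since $S$ consists of regular elements, the localization map $\phi\colon R\to R_S$ is injective, so $I=0$. Both facts are used freely below.

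For the inclusion of the $S$\+divisible modules into $\clP_1^{\perp_1}$, I would simply assemble two earlier results. By Proposition~\ref{P:Matlis}~(2), the relation $\operatorname{p.dim}_RR_S\le1$ guarantees that every $S$\+divisible $R$\+module is $S$\+h\+divisible. By Lemma~\ref{L:annihil}~(2), since $R_S$ is perfect, the class $\clP_1^{\perp_1}$ contains every $S$\+h\+divisible $R$\+module. Composing, every $S$\+divisible $R$\+module lies in $\clP_1^{\perp_1}$.

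For the reverse inclusion I would use the regularity of $S$ in an essential way. Let $M\in\clP_1^{\perp_1}$ and fix $s\in S$. Since $s$ is a nonzero\+divisor, the sequence
\[
 0\la R\overset{\dot{s}}\la R\la R/sR\la0
\]
is exact, so $R/sR\in\clP_1$ and hence $\Ext^1_R(R/sR,M)=0$. Applying $\Hom_R(-,M)$ to this sequence and using $\Hom_R(R,M)\cong M$, I obtain the exact fragment $M\overset{\dot{s}}\la M\la\Ext^1_R(R/sR,M)=0$, which says precisely that multiplication by~$s$ is surjective on~$M$, i.e.\ $sM=M$. As $s\in S$ was arbitrary, $M$ is $S$\+divisible, completing the argument.

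The proof is short and I do not expect a genuine obstacle; the single point demanding care is where regularity enters, namely the step $R/sR\in\clP_1$. For a non-regular $s$ the multiplication map $R\overset{\dot{s}}\la R$ fails to be injective, the length\+one resolution above collapses, and $R/sR$ need no longer have projective dimension $\le1$ (the annihilator $\Ann(s)$ intervenes, cf.\ Lemma~\ref{L:F_1-R/I}~(1)). This is exactly why the converse inclusion is claimed only under the regularity hypothesis, whereas the forward inclusion is the routine combination of Proposition~\ref{P:Matlis} and Lemma~\ref{L:annihil} and holds in greater generality.
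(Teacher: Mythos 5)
Your proof is correct. The forward inclusion ($S$\+divisible $\subseteq\clP_1^{\perp_1}$) is exactly the paper's argument: Proposition~\ref{P:Matlis}~(2) combined with Lemma~\ref{L:annihil}~(2), after noting $\operatorname{p.dim}_RR_S\le1$. For the reverse inclusion, however, you take a genuinely different and more elementary route. The paper tests a module $C\in\F_1^{\perp_1}$ against the single module $R_S/R$: since $R_S/R\in\F_1$, one gets $\Ext^1_R(R_S/R,C)=0$, and Lemma~\ref{L:annihil}~(1) (which requires $\operatorname{p.dim}_RR_S\le1$) then converts this vanishing into $S$\+divisibility of~$C$; the hypothesis $\clP_1=\F_1$ is used to identify $\clP_1^{\perp_1}$ with $\F_1^{\perp_1}$. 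You instead test against the family of cyclic modules $R/sR\in\clP_1$, $s\in S$, and read off $sM=M$ directly from $0=\Ext^1_R(R/sR,M)\cong M/sM$. Your argument actually proves something slightly stronger: the inclusion $\clP_1^{\perp_1}\subseteq\{S\text{\+divisible modules}\}$ holds for \emph{any} multiplicative set of regular elements, with no perfectness assumption and no use of $\clP_1=\F_1$ (those hypotheses are needed only for the opposite inclusion). What the paper's route buys is economy within its own framework: it reuses the already-established divisibility criterion of Lemma~\ref{L:annihil}~(1) and tests against one module rather than a family, and it applies verbatim at the level of $\F_1^{\perp_1}$, which is the form in which the statement is redeployed in Proposition~\ref{P:regular-P_1}. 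Both arguments are complete; under the standing hypotheses the two reverse inclusions coincide.
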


\begin{proof} 
 If $\clP_1=\F_1$, then $\operatorname{p.dim}_RR_S\leq 1$, and,
by Proposition~\ref{P:Matlis} and Lemma~\ref{L:annihil}~(2),
the class $\clP_1^{\perp_1}$ (\,=~$\F_1^{\perp_1}$) contains the class
of $S$\+divisible modules.
 Furthermore, $\F_1^{\perp_1}$ is contained in the class of
$S$\+divisible modules, since $R_S/R\in\F_1$ and
Lemma~\ref{L:annihil}~(1) applies.
\end{proof}

\begin{lem}\label{L:P_1=F_1}
 Assume that $\clP_1=\F_1$.
 Then, for every regular element $r\in R$, the ring $R/rR$ is perfect.
\end{lem}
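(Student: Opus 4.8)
The plan is to prove that $A=R/rR$ is perfect by verifying that every flat $A$\+module is projective, which is equivalent to perfectness by Lemma~\ref{L:perfect}~(1). Moreover, exactly as in the proof of Lemma~\ref{S-almost-perfect-implied-by}, for this it suffices to show that every \emph{Bass} flat $A$\+module is projective. So I would fix a sequence $\overline a_1,\overline a_2,\dots$ of elements of $A$, let $\overline M$ be the direct limit of $A\xrightarrow{\overline a_1}A\xrightarrow{\overline a_2}A\xrightarrow{}\cdots$, and aim to prove that this $\overline M$ is a projective $A$\+module (the case $\overline M=0$ being trivial).

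The first step records two dimension estimates, one over $A$ and one over $R$. Over $A$, the telescope presentation $0\to A^{(\bbN)}\to A^{(\bbN)}\to\overline M\to 0$ (writing $\overline M$ as the cokernel of $1$ minus the shift) shows at once that $\operatorname{p.dim}_A\overline M\le 1$; in particular it is \emph{finite}. Over $R$, the element $r$ is regular, so $0\to R\xrightarrow{r}R\to A\to 0$ is a free resolution and the flat dimension of $A$ over $R$ is at most~$1$. Since $\overline M$ is a direct limit of copies of $A$ and $\Tor^R_n({-},X)$ commutes with direct limits, $\Tor^R_n(\overline M,X)=0$ for all $n\ge 2$ and all $X$, i.e. $\overline M\in\F_1$. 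By the standing hypothesis $\clP_1=\F_1$ this gives $\operatorname{p.dim}_R\overline M\le 1$.

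The decisive step is to descend this bound from $R$ back to $A$. Because $r$ is a central non\+zero\+divisor and $\overline M$ is an $A$\+module of \emph{finite} projective dimension over $A$, the first change of rings theorem applies and yields $\operatorname{p.dim}_R\overline M=\operatorname{p.dim}_A\overline M+1$ for $\overline M\ne 0$; together with $\operatorname{p.dim}_R\overline M\le 1$ this forces $\operatorname{p.dim}_A\overline M=0$, so $\overline M$ is projective. The mechanism behind this theorem, which I would invoke (or sketch), is the change of rings spectral sequence $\Ext^s_A(\overline M,\Ext^t_R(A,\overline N))\Rightarrow\Ext^{s+t}_R(\overline M,\overline N)$ for an $A$\+module $\overline N$: here $\Ext^t_R(A,\overline N)=\overline N$ for $t=0,1$ and vanishes for $t\ge 2$, so if $p=\operatorname{p.dim}_A\overline M$ and $\Ext^p_A(\overline M,\overline N)\ne 0$, then the corner term $E_2^{p,1}=\Ext^p_A(\overline M,\overline N)$ survives to $E_\infty^{p,1}$, whence $\Ext^{p+1}_R(\overline M,\overline N)\ne 0$ and $\operatorname{p.dim}_R\overline M\ge p+1$.

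The point demanding the most care — and the reason the reduction to Bass modules is essential rather than cosmetic — is precisely the finiteness of $\operatorname{p.dim}_A\overline M$ used in the last step. The naive implication ``$\operatorname{p.dim}_R\overline M<\infty\Rightarrow\operatorname{p.dim}_A\overline M<\infty$'' is false in general (it already fails for the residue field of a singular quotient $A=R/rR$ of a regular ring $R$, whose resolution over $A$ is infinite and periodic), so one cannot run this argument with an arbitrary flat $A$\+module directly. It is the telescope presentation of a Bass module that guarantees $\operatorname{p.dim}_A\overline M\le 1<\infty$ and thereby unlocks the change of rings theorem; everything else is the elementary bookkeeping of flat dimension under direct limits together with the hypothesis $\clP_1=\F_1$.
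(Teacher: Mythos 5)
Your proof is correct and follows essentially the same route as the paper's: reduce to Bass modules over $R/rR$, use the telescope presentation to get $\operatorname{p.dim}_{R/rR}$ of the Bass module at most one, show it lies in $\F_1$ as an $R$\+module, invoke the hypothesis $\clP_1=\F_1$, and conclude with the first Change of Rings Theorem for the regular element~$r$. The only (harmless) difference is in how membership in $\F_1$ is established --- you use that $\Tor$ commutes with direct limits, while the paper argues via purity of the telescope sequence regarded as a sequence of $R$\+modules; both are valid.
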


\begin{proof}
 It is enough to show that every Bass $R/rR$\+module is projective.
 Let $\overline a_1$ , $\overline a_2$,~\dots, $\overline a_n$, ~\dots\
be a sequence of elements in $R/rR$.
 Consider the direct system $R/rR\overset{\overline a_1}\to R/rR
\overset{\overline a_2}\to R/rR\overset{\overline a_3}\to\dots$
and its direct limit~$N$.
 Let
\begin{equation}
 \tag{$\ast$} \textstyle
 0\la \bigoplus_n R/rR \overset{\psi}\la \bigoplus_n R/rR \la N \la 0
\end{equation}
be a direct limit presentation of~$N$.
 The sequence $(\ast)$ is pure also as a sequence of $R$\+modules
(since for every $R/rR$\+module $M$ and every $R$\+module $X$,
\,$M\otimes_RX\cong M\otimes_{R/rR}X/rX$, hence the functor
${-}\otimes_RX$ leaves the sequence exact).
 Thus $\Tor^R_2(N,X)=0$ for every $R$-module $X$, since
$R/rR\in \clP_1\subseteq \F_1$ and $\Tor^R_1(\psi,X)$ is a monomorphism.
 So $N\in \F_1$ and by assumption $N\in \clP_1$.
  From sequence~($\ast$) we infer that the projective dimension of $N$
over $R/rR$ is at most~$1$, and since $r$~is a regular element of $R$
the Change of Rings Theorem tells us that
$\operatorname{p.dim}_R N=\operatorname{p.dim}_{R/rR}N+1$.
 Hence $\operatorname{p.dim}_{R/rR}N=0$.
\end{proof} 

 If the multiplicative set $S$ consists of regular elements, we can state
a result analogous to \cite[Theorem~7.1]{FS}, which was formulated for
the case when $S$ is the set of all the regular elements of~$R$.
 In particular, an equivalent condition is given by $\clP_1=\F_1$.

\begin{prop}\label{P:regular-P_1}
 Let $S$ be a multiplicative set of regular elements of a ring $R$ and
assume that $R_S$ is a perfect ring.
 Then the following are equivalent:
\begin{enumerate}
\item $R/sR$ is a perfect ring for every $s\in S$.
%\CHG{\item $S$\+weakly cotorsion modules are Enochs cotorsion.
%\item Flat modules are $S$\+strongly flat.}
\item $\F_1^{\perp_1}$ coincides with the class of $S$\+divisible
        $R$\+modules.
\item $\F_1^{\perp_1}$ is closed under epimorphic images.
\item $\clP_1=\F_1$.
\item If $\D_S$ is the class of $S$\+divisible $R$\+modules, then
        $(\clP_1, \D_S)$ is a cotorsion pair which coincides with
        $(\F_1, \F_1^{\perp_1})$. 
\end{enumerate}
\end{prop}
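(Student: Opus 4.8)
The plan is to run the cycle $(1)\Rightarrow(4)\Rightarrow(2)\Rightarrow(3)\Rightarrow(4)$ together with $(4)\Rightarrow(1)$, and then to handle $(5)$ by extracting it from $(2)$ and $(4)$ while observing that $(5)$ trivially returns both. The first remark to make is that, since $R_S$ is assumed perfect and $S$ consists of regular elements, condition~$(1)$ is precisely the assertion that $R$ is $S$\+almost perfect; this lets me connect to the machinery of the previous section.

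For $(1)\Rightarrow(4)$ I would invoke that $S$\+almost perfectness yields $\clP_1=\F_1$ directly by Proposition~\ref{P:P_1=F_1}. For the reverse $(4)\Rightarrow(1)$, since every $s\in S$ is a regular element, Lemma~\ref{L:P_1=F_1} applies and gives that $R/sR$ is perfect for all $s\in S$, which is~$(1)$. The implication $(4)\Rightarrow(2)$ is then immediate from Proposition~\ref{P:regular}: when $\clP_1=\F_1$ one has $\F_1^{\perp_1}=\clP_1^{\perp_1}$, and that proposition identifies $\clP_1^{\perp_1}$ with the class $\D_S$ of $S$\+divisible modules.

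The step $(2)\Rightarrow(3)$ requires only the elementary observation that $\D_S$ is closed under epimorphic images, since $sD=D$ and a surjection $D\twoheadrightarrow D'$ force $sD'=D'$; thus $(2)$ forces $\F_1^{\perp_1}$ to be closed under epimorphic images. For $(3)\Rightarrow(4)$ I would reuse the homological criterion already exploited at the start of the proof of Proposition~\ref{P:P_1=F_1}: one has $\clP_1=\F_1$ if and only if $\F_1^{\perp_1}$ contains every cosyzygy module (epimorphic image of an injective), because $M\in\clP_1$ is equivalent to $\Ext^1_R(M,C)=0$ for all such $C$. Every injective $R$\+module lies in $\F_1^{\perp_1}$ trivially, so closure under epimorphic images at once places all cosyzygies into $\F_1^{\perp_1}$, whence $(4)$.

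It remains to incorporate $(5)$. Granting the now-equivalent conditions $(2)$ and $(4)$, we have $\clP_1=\F_1$ and $\clP_1^{\perp_1}=\F_1^{\perp_1}=\D_S$; since $(\clP_1,\clP_1^{\perp_1})$ is a cotorsion pair \cite{GT}, it follows that $(\clP_1,\D_S)$ is a cotorsion pair coinciding with $(\F_1,\F_1^{\perp_1})$, which is exactly~$(5)$. Conversely, $(5)$ literally asserts $\clP_1=\F_1$ and $\D_S=\F_1^{\perp_1}$, returning $(4)$ and $(2)$. The only point that is not a direct assembly of earlier results is the cotorsion-pair assertion ${}^{\perp_1}(\clP_1^{\perp_1})=\clP_1$ used for $(5)$, i.e.\ the standard fact that the modules of projective dimension at most one form the left-hand class of a (hereditary, complete) cotorsion pair; I expect this to be the only step genuinely requiring an external citation rather than the results established above, and hence the one to watch.
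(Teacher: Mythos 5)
Your proof is correct, and the results you cite are available and non-circular here: Proposition~\ref{P:P_1=F_1}, Proposition~\ref{P:regular} and Lemma~\ref{L:P_1=F_1} are all proved independently of the present statement, and with the standing hypothesis that $R_S$ is perfect and $S$ consists of regular elements, condition~(1) is indeed exactly $S$-almost perfectness. Your arrangement, however, differs from the paper's. The paper runs the single cycle (1) $\Rightarrow$ (2) $\Rightarrow$ (3) $\Rightarrow$ (4) $\Rightarrow$ (5) $\Rightarrow$ (1): it proves (1) $\Rightarrow$ (2) directly, with Lemma~\ref{L:F_1-perp} giving $\D_S\subseteq\F_1^{\perp_1}$ and the reverse inclusion coming from $R_S/R\in\F_1$ together with Lemma~\ref{L:annihil}~(1); the clause (5) then sits inside the cycle, via (4) $\Rightarrow$ (5) (Proposition~\ref{P:regular}) and (5) $\Rightarrow$ (1) (Lemma~\ref{L:P_1=F_1}). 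You instead get (1) $\Leftrightarrow$ (4) by citing Proposition~\ref{P:P_1=F_1} and Lemma~\ref{L:P_1=F_1}, derive (2) from (4) via Proposition~\ref{P:regular}, and extract (5) as a formal consequence of (2) and (4). The two proofs share the trivial step (2) $\Rightarrow$ (3) and the cosyzygy argument for (3) $\Rightarrow$ (4), which you reproduce exactly as in the paper. Your route is more economical in new computations---(1) $\Rightarrow$ (4) becomes a pure citation rather than a redo of the divisible-module estimates---at the cost of leaning on the heavier Proposition~\ref{P:P_1=F_1} (whose proof is the pushout argument), whereas the paper keeps each implication elementary relative to the lemmas of Section~\ref{P1=F1-secn}. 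Finally, you are right to single out the fact that ${}^{\perp_1}(\clP_1^{\perp_1})=\clP_1$, i.e.\ that $(\clP_1,\clP_1^{\perp_1})$ is a cotorsion pair, as the one genuinely external ingredient needed for the cotorsion-pair assertion in (5); it is worth noting that the paper's proof needs this too, since its step (4) $\Rightarrow$ (5) invokes only Proposition~\ref{P:regular}, which yields the equality of classes $\clP_1^{\perp_1}=\D_S$ but not the cotorsion-pair property itself, so both arguments ultimately rest on this standard fact from~\cite{GT}.
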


\begin{proof}
% In view of Theorem~\ref{P:characterization}, the first three
%conditions are equivalent.

(1) $\Rightarrow$ (2)  By Lemma~\ref{L:F_1-perp} every $S$\+divisible
module is contained in $\F_1^\perp$.

 Conversely, $\F_1^{\perp_1}\subseteq \{R_S/R\}^{\perp_1}$ and
$\{R_S/R\}^{\perp_1}$ is the class of $S$\+divisible modules by
Lemma~\ref{L:annihil}~(1).

(2) $\Rightarrow$ (3) is obvious.

(3) $\Rightarrow$ (4) Let $M\in \F_1$ and for every $R$\+module $X$
consider a short exact sequence $0\to X\to E\to E/X\to 0$ with $E$
an injective module.
 Then $\Ext^1_R(M, E/X)\cong\Ext^2_R(M, X)$ and by~(3)
\,$\Ext^1_R(M, E/X)=0$.

 Thus $M\in \clP_1$.

(4) $\Rightarrow$ (5) Follows by Proposition~\ref{P:regular}.

(5) $\Rightarrow$ (1) Follows by Lemma~\ref{L:P_1=F_1}.
\end{proof}

 In the above proposition, the assumption that $R_S$ be a perfect ring
cannot be dropped.
 In fact, the example below shows there exists a commutative ring
such that $\clP_1=\F_1$, but with nonperfect total ring of quotients.

\begin{expl}
 In~\cite[5.1]{Ber} it is shown that there is a totally disconnected
topological space $X$ whose ring of continuous functions $K$ is
Von Neuman regular and hereditary.
 Moreover, every regular element of $K$ is invertible.
 Hence $K$ coincides with its own ring of quotients and
$\clP_1=\F_0=\Modr K$, but $K$ is not perfect,
since it is not semisimple.
\end{expl}

\end{document}